\renewcommand{\ell}{{l}}  
\newcommand{\R}{{\mathbb{R}}}
\newcommand{\CC}{{\mathbb C}}
\newcommand{\RR}{{\mathbb R}}
\newcommand{\N}{{\mathbb N}}
\newcommand{\tF}{{\tilde{F}}}
\newcommand{\tb}{{\tilde b}}
\newcommand{\p}{\partial} 
\newcommand{\supp}{\operatorname{supp}}
\renewcommand{\Re}{\mathop{\rm Re}\nolimits}
\renewcommand{\Im}{\mathop{\rm Im}\nolimits}
\newcommand{\lX}{l^1 X}
\newcommand{\ltX}{l^2 X}
\newcommand{\ltY}{l^2 Y}
\newcommand{\lY}{l^1 Y}
\newcommand{\lH}{l^1 H}
\renewcommand{\O}{{\mathcal{O}}}
\theoremstyle{plain}
\newtheorem{thm}{Theorem}[section]
\newtheorem{prop}[thm]{Proposition}
\newtheorem{cor}[thm]{Corollary}
\newtheorem{lemma}[thm]{Lemma}
\theoremstyle{definition}
\newtheorem{rem}{Remark}[thm]
\newtheorem{defn}[thm]{Definition} 
\numberwithin{equation}{section}
\def\squarebox#1{\hbox to #1{\hfill\vbox to #1{\vfill}}}
\newcommand{\la}{\langle}
\newcommand{\ra}{\rangle}
\newcommand{\win}{{w_{in}}}
\renewcommand{\wr}{{w_{R}}}
\newcommand{\wout}{{w_{out}}}
\title[Quasilinear Schr\"odinger equations]
{Quasilinear Schr\"odinger equations III:  Large Data and Short Time}
\author[J.L. Marzuola]
{Jeremy L. Marzuola}
\author[J. Metcalfe]
{Jason Metcalfe}
\author[D. Tataru]
{Daniel Tataru}
\address{Department of Mathematics, University of North Carolina-Chapel Hill \\
Phillips Hall, Chapel Hill, NC  27599, USA}
\email{marzuola@email.unc.edu}
\address{Department of Mathematics, University of North Carolina-Chapel Hill \\
Phillips Hall, Chapel Hill, NC  27599, USA}
\email{metcalfe@email.unc.edu}
\address{Mathematics Department, University of California \\
Evans Hall, Berkeley, CA 94720, USA}
\email{tataru@math.berkeley.edu}
\begin{document}

\begin{abstract}
  In this article we prove short time local well-posedness in low-regularity
  Sobolev spaces for large data general quasilinear Schr\"odinger
  equations with a nontrapping assumption. These results represent improvements over the small data regime considered by the authors in \cite{MMT3,MMT4}, as well as the pioneering  works by Kenig-Ponce-Vega and Kenig-Ponce-Rolvung-Vega~\cite{KPV,KPRV1,KPRV2}, 
where viscosity methods were used to prove existence
of solutions  for localized data in  high regularity spaces.  Our arguments here are
purely dispersive.  The function spaces in which we show
existence are constructed in ways motivated by the results of
Mizohata, Ichinose, Doi, and others, including the authors.
\end{abstract}

\maketitle

\section{Recap of History and Discussion of 
Main Results}

In this article we consider the large data local well-posedness for
quasilinear Schr\"odinger equations, extending the earlier small data
results of the authors in \cite{MMT3,MMT4}.  Specifically, we will
study equations of the form
\begin{equation}
\label{eqn:quasiquad}
\left\{ \begin{array}{l}
i u_t + g^{jk} (u,\bar u, \nabla u,\nabla \bar u ) \p_j \p_ku = 
F(u,\bar u,\nabla u, \nabla \bar u) , \quad u:
\RR \times \RR^d \to \CC^m \\ \\
u(0,x) = u_0 (x).
\end{array} \right.
\end{equation}
Here $g$ and $F$ are assumed to be smooth functions of their variables, with $g$ real and positive definite. In particular this allows for them to depend on both $u$ and $\bar u$.

In the small initial data case these equations have already been considered by the authors
 \cite{MMT3,MMT4}  in  spaces of relatively low Sobolev regularity. The latter paper 
considers problems which have only cubic and higher nonlinearities, where the 
initial data is in Sobolev spaces $H^s$. The former paper allows for general nonlinearities,  i.e. including quadratic terms,  but with smaller Sobolev spaces $\ell^1 H^s$, which are still translation invariant but have some stronger summability assumptions.  These will be made precise in the discussion below. 

The aim of this paper is to prove instead a local well-posedness result for the large data problem. Compared with the small data case,  here we need to contend with an
additional obstacle, namely trapping.  To prevent this,  we impose a nontrapping 
condition on the initial data. Then, as a part of our results, we prove that 
nontrapping persists for some small time. In this context, the  lifespan of the solutions no longer depends only on the data size. Instead, our lower bound on the lifespan will also depend on a quantitative form of the nontrapping assumption.

Here we will work primarily with quadratic nonlinear interactions, as
in \cite{MMT3}. We will also state the counterpart of the result in the cubic case,
as considered for  small data in \cite{MMT4};
however, as the proofs only differ slightly we will focus on the quadratic
setting and only remark where the proofs need adjustment for the cubic
interactions.  Specifically, we will study the equations \eqref{eqn:quasiquad}
assuming that
\[
g : \CC^m\times\CC^m \times (\CC^m)^d \times (\CC^m)^d \to
\RR^{d \times d}, \qquad 
F: \CC^m\times \CC^m \times (\CC^m)^d \times (\CC^m)^d \to \CC^m
\]
are smooth functions which,  for $y,z \in \CC^m \times (\CC^m)^d$, satisfy
\begin{equation}\label{gF}
g(0) = I_d, \qquad 
|F(y,z)| \sim O(|y|^2+|z|^2) \text{ near } (y,z) = (0,0)
\end{equation}
in the quadratic interaction problem and
\begin{equation}\label{gT}
g(y,z) = I_d+O(|y|^2+|z|^2), \qquad 
F(y,z)= O(|y|^3+|z|^3) \text{ near } (y,z) = (0,0).  
\end{equation}
in cubic interaction problem.
We will also assume uniform ellipticity of $g$ in both cases.  Namely, we will assume that 
\[
c_0 |\xi|^2 \leq g^{jk} \xi_j \xi_k \leq c_0^{-1} |\xi|^2
\] 
for a fixed $c_0 >0$.

As in \cite{MMT3,MMT4}, we also consider a second class of  quasilinear
Schr\"odinger equations 
\begin{equation}
\label{eqn:quasiquad1}
\left\{ \begin{array}{l}
i u_t + \p_j g^{jk} (u,\bar u)  \p_ku = 
F(u,\bar u,\nabla u,\nabla \bar u) , \ u:
\RR \times \RR^d \to \CC^m \\ \\
u(0,x) = u_0 (x),
\end{array} \right. 
\end{equation}
with $g$ and $F$ as in \eqref{gF} but where the metric $g$ depends on
$u$ but not on $\nabla u$.  Such an equation is obtained for instance
by differentiating the first equation
\eqref{eqn:quasiquad}. Precisely, if $u$ solves \eqref{eqn:quasiquad}
then the vector $(u,\nabla u)$ solves an equation of the form
\eqref{eqn:quasiquad1}, with a nonlinearity $F$ which depends 
at most quadratically on $\nabla u$.

We note that the second order operator in \eqref{eqn:quasiquad1} is
written in divergence form. This is easily achieved by commuting the
first derivative with $g$ and moving the output to the right hand
side. However, the second order operator in \eqref{eqn:quasiquad}
cannot be written in divergence form without possibly changing the type
of the equation.
 
The proof of the large data result presented here follows the the same
strategy as in the works \cite{MMT3,MMT4}. The main novelty in this
paper is in the proof of the local energy decay estimates for the
linearized equation. The difficulty is not only that we need to work
with a large nontrapping metric, but also that, in order to prevent a
nonlinear energy cascade to high frequencies, we have to produce a
very accurate bound for the (exponentially large) constant in the
local energy bounds in terms of our quantitative nontrapping
parameters. Our proof requires a new multiplier construction for the estimates 
since here we assume no quantitative decay of the solution in the physical space.
Also, a careful set-up and ordering of large constants is essential in order 
to avoid a circular argument.

Even at the linear level, an obstruction to well-posedness comes from
the infinite speed of propagation phenomena.  We recall some of the
issues here as it is even more relevant for the large data problem.
From \cite{Ich,MMT1,Miz1,Miz2,Miz3,Tak80}, it is known that even in the case
of linear problems of the form
\begin{equation} \label{lin}
(i \partial_t  +  \Delta_g)  v = A_i(x) \partial_i v,
\end{equation}
a necessary condition for $L^2$ well-posedness is an integrability
condition for the (imaginary part of) the magnetic potential $A$ along
the Hamilton flow of the leading order differential operator.  In the
case of \eqref{eqn:quasiquad}, we would have to look instead at the
corresponding linearized problem, which would exhibit a magnetic
potential of the form $A = A(u,\nabla u)$.  If one considers equations
with quadratic terms and with $H^s$ initial data, then such a
potential does not generically satisfy Mizohata's integrability
condition. Thus, some further decay condition on the initial data is
necessary. A further motivation for decay conditions comes from the
large data problem, where one seeks to confine the trapping analysis
to a compact set. Together, these two observations show that in our
context nontrapping is a compact phenomena, and also that it is stable
with respect to small perturbations of $u_0$.

Indeed, such a decay condition was manifest in the seminal papers
\cite{KPV,KPRV1,KPRV2}, where the first local well-posedness results
for this problem were obtained. There, local well-posedness results
for this problem were proved for data (and solutions) in $H^s \cap L^2
(\langle x \rangle^N)$, where $\langle x \rangle=(1+|x|^{2})^{\frac{1}{2}}$, for some 
large $s$ and $N$.

By contrast, our previous results in \cite{MMT3,MMT4} apply for
initial data in spaces which are not only low regularity, but also
translation invariant.  Precisely, for the quadratic problem
\eqref{gF} we use the $\ell^1 H^s$ spaces (see a
precise definition below), while for the cubic problem \eqref{gT} we revert
to the classical $H^s$ spaces.  Maintaining this natural setting
is one of our objectives for the large data problem.

We now turn our attention to the nontrapping condition for the initial
data.  This is defined in a qualitative manner in terms of the
Hamilton flow associated with $g(u_0)$ (or equivalently, the geodesic flow
associated to the Riemannian metric $g(u_0)$):
\begin{defn}
We say that the metric $g(u_0)$ is nontrapping if all nontrivial bicharacteristics for $\Delta_{g(u_0)}$ escape to spatial infinity at both ends.
\end{defn}

To motivate the fact that this definition is meaningful for $u_0$ in
our initial data spaces we make several observations. Firstly, our
choice for the initial data space guarantees that $g(u_0) \in C^2$,
therefore its Hamilton flow is well defined locally.  Secondly, while
$u_0$ may be large, the fact that we are using $L^2$ based spaces
implies that $u_0$ is small in our function spaces outside a large
compact set. Thus trapping is necessarily confined to
bicharacteristics which intersect this compact set.

The above qualitative definition of trapping suffices in order to
state our main results. However, in order to prove the results, as
well as to provide a lifespan bound, we will have to use a more
precise quantitative characterization of nontrapping.

Before stating our main results, we briefly recall the definition of the $\ell^1 H^s$
spaces, following \cite{MMT3}. These are defined using a standard spatial Littlewood-Paley decomposition
\[
1 = \sum_{k \in \N} S_k
\]
where $S_0$ selects all frequencies of size $\lesssim 1$. Corresponding to each dyadic 
frequency scale $2^j \geq 1$ we consider an associated  partition $\mathcal{Q}_{j}$ of $\RR^{d}$ into cubes of side length $2^{j}$ and an associated smooth partition of unity
\[
1 = \sum_{Q \in \mathcal{Q}_{j}}  \chi_Q.
\]
Then we can define the $l^1_j L^2$ norm by 
\[
\| u\|_{l^1_j L^2} = \sum_{Q \in \mathcal{Q}_{j}} \| \chi_Q u\|_{L^2},
\]
and the space $l^1 H^s$ with norm given by
\[
\| u\|_{l^1 H^s}^2 = \sum_{j \geq 0} 2^{2sj} \|S_j u\|_{ l^1_j L^2}^2 .
\]
With our spaces in hand, we can now state our main result concerning the quasilinear problem \eqref{eqn:quasiquad} with data $u_0 (x) \in l^1 H^s$ and quadratic interactions \eqref{gF}.

\begin{thm}
\label{thm:main1}
a) Let $s > \frac{d}2+3$.  Let $u_0 \in l^1 H^s$ be a nontrapping
initial datum for the equation \eqref{eqn:quasiquad} with quadratic
interactions \eqref{gF}.   Then, there exists $T= T(u_0) > 0$ sufficiently small
such that the equation \eqref{eqn:quasiquad} is locally well-posed in
$l^1 H^s (\RR^d)$ on the time interval $I = [0,T]$.

b) The same result holds for the equation \eqref{eqn:quasiquad1}
with $s > \frac{d}2 + 2$.
\end{thm}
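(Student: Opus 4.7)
The plan is to follow the paradifferential framework of \cite{MMT3,MMT4} but to replace the small-data linear theory by a large-data linear theory whose constants are quantified in terms of a single nontrapping parameter. My first step would be to promote the qualitative nontrapping definition to a quantitative one: since $u_0 \in \ell^1 H^s$ is small in the tail, $g(u_0)$ is a compactly supported perturbation of the Euclidean metric up to small errors outside a ball $B(0,R_0)$, and I would define an escape time $T_{\mathrm{esc}}(u_0)$ as the supremum over unit-speed bicharacteristics of $\Delta_{g(u_0)}$ of the time needed to exit a slightly larger ball $B(0,R_1)$. This parameter, together with $\|u_0\|_{\ell^1 H^s}$, will govern both the constants in the linear estimates and the lower bound on the lifespan $T$.

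After paralinearization of \eqref{eqn:quasiquad}, the analysis reduces to a linear Schr\"odinger operator of the form $i\partial_t + \partial_j \tilde g^{jk}\partial_k + \text{first-order}$, where $\tilde g$ is built from a frequency-truncated version of the unknown. The linear inputs I would establish are (i) $\ell^1 H^s$ energy estimates controlled by dyadic frequency envelopes and (ii) a local smoothing / local energy decay estimate whose constant depends \emph{explicitly} on $T_{\mathrm{esc}}$. Estimate (ii) is the new ingredient flagged by the authors; with no pointwise spatial decay of $u$ to exploit, the standard Doi-type multiplier is not available, and I would attempt a positive commutator argument with a symbol of the form $a(x,\xi)=\int_0^{T_{\mathrm{esc}}} \chi(x(s),\xi(s))\,ds$, integrating a cutoff along the Hamilton flow so that $\{p,a\}$ is positive on the relevant compact frequency-localized region. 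This gives a local smoothing constant which is at worst exponential in $T_{\mathrm{esc}}$ times the $C^2$ norm of the metric.

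With the linear theory in hand, the nonlinear iteration proceeds essentially as in \cite{MMT3}: regularize the data and equation at frequency $2^j$, use the linear energy and local smoothing bounds to propagate dyadic frequency-envelope estimates on the approximate solutions, and then use the same machinery applied to the linearized equation to obtain Lipschitz dependence in a weaker norm, from which existence, uniqueness and continuous dependence follow by a standard limiting argument. The regularity threshold $s>\tfrac{d}{2}+3$ is what is needed for the paralinearization errors and the $F$ term to be absorbed into the nonlinear iteration; part (b) for \eqref{eqn:quasiquad1} saves one derivative because $g$ depends only on $u$, giving the threshold $s>\tfrac{d}{2}+2$.

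The main obstacle, as the authors themselves emphasize, is the delicate ordering of large constants: the local smoothing constant is exponentially large in $T_{\mathrm{esc}}$, the target time $T$ must be taken inversely proportional to these constants, and nothing in the scheme may be allowed to increase $T_{\mathrm{esc}}(u(t))$ in an uncontrolled way during $[0,T]$, lest the argument become circular. My remedy would be to fix at the outset a target nontrapping level $T_{\mathrm{esc}}^{\ast}=2T_{\mathrm{esc}}(u_0)$, calibrate all linear estimates and the choice of $T$ to $T_{\mathrm{esc}}^{\ast}$, and then prove a separate stability lemma showing that the Hamilton flow of $g(u(t))$ is a small perturbation of that of $g(u_0)$ on the time scale $T_{\mathrm{esc}}^{\ast}$, so that $T_{\mathrm{esc}}(u(t))\leq T_{\mathrm{esc}}^{\ast}$ is recovered a posteriori on $[0,T]$. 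Persistence of nontrapping then emerges as a byproduct of the same bootstrap, yielding the quantitative lifespan bound asserted in the theorem.
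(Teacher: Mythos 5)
Your high-level architecture matches the paper's: quantify nontrapping (your $T_{\mathrm{esc}}$ corresponds to the paper's $L$, the maximal geodesic length in $2B_R$), build a paradifferential linear theory with an explicit $e^{C(M)L}$ constant, choose $T \lesssim e^{-C(M)L}$, iterate starting from $u^{(0)}=0$, prove weak Lipschitz bounds for differences, and recover nontrapping on $[0,T]$ via a stability lemma for the Hamilton flow under $\lesssim e^{-C_0(M)L}$ metric perturbations. The "fix $T_{\mathrm{esc}}^\ast = 2T_{\mathrm{esc}}(u_0)$ and close a bootstrap" strategy is precisely what the paper does.

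The genuine gap is in the multiplier construction. The symbol $a(x,\xi)=\int_0^{T_{\mathrm{esc}}}\chi(x(s),\xi(s))\,ds$ is essentially the classical Doi multiplier, and it is insufficient here for three reasons. First, it gives only $-H_p a = \chi \geq 0$, not the strengthened inequality $-H_a q \gtrsim M|\xi|q$ that is needed to absorb the first-order coefficients $b$, $\tilde b$ (which are of size $M$ in $\ell^1 X^{s_0-1}$, not small); the paper therefore solves the ODE $-\tilde H_{a_\lambda} q = CMq + \chi$ backward along the flow, producing an exponentially weighted integral rather than a plain one. Second, the metric $g$ has only $C^{2,\delta}$-type regularity, so the Hamilton flow map is not smooth and the integral $\int \chi\circ\Phi^s\,ds$ is not a usable pseudodifferential symbol; the paper must first regularize $g$ to $g_\lambda$ at the scale $\lambda \approx e^{CML}$, construct $q$ with respect to $a_\lambda$, and then show the flows stay $e^{-CML}$-close, and must also replace G{\aa}rding's inequality by an explicit sum-of-squares decomposition $c_0^{\mathrm{main}}=\phi_1^2+\phi_2^2+\phi_3^2$ to make the low-regularity error symbol $c^{\mathrm{err}}\in C^1 S^1$ tractable. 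Third, a single global multiplier cannot separate the region where exponential norm inflation is unavoidable (the trapping ball) from the exterior where the small-data theory applies without loss; the paper's estimate is therefore split into an incoming estimate ($Q_{\mathrm{in}}$ with no exponential loss), a compact-set estimate ($Q_{\mathrm{comp}}$ carrying the $e^{CML}$ loss), and an exterior outgoing estimate via truncation to a small-data problem, chained together in that order. Finally, you have not addressed the conjugate-linear term $T_{\tilde b}\cdot\nabla\bar w$: it is perturbative for small data but not here, and the paper removes it by conjugating with $S = I + \mathcal R$, where $\mathcal R$ is an order $-1$ paradifferential operator built from $\tilde b / g$; without this step the positive commutator argument does not close.
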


The counterpart of this result for the cubic problem \eqref{gT} is as follows using standard Sobolev spaces, which extends \cite{Michalowski} to the low regularity regime:
\begin{thm}
\label{thm:main2}
a) Let $s > \frac{d+5}2$. Let $u_0 \in H^s$ be a nontrapping
initial datum for the equation \eqref{eqn:quasiquad} with cubic
interactions \eqref{gT}.   Then, there exists $T= T(u_0) > 0$ sufficiently small
such that the equation \eqref{eqn:quasiquad} is locally well-posed in
$l^2 H^s (\RR^d)$ on the time interval $I = [0,T]$.

b) The same result holds for the equation \eqref{eqn:quasiquad1} with cubic nonlinear interactions
with $s > \frac{d+3}2 $.
\end{thm}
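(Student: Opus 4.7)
The plan is to follow the strategy employed for Theorem~\ref{thm:main1}, adapted to the cubic setting where the improved structure $g - I_d = O(|u|^2)$ and $F = O(|u|^3)$ of \eqref{gT} allows us to work with the standard Sobolev data $u_0 \in H^s$ instead of the summability-improved scale $l^1 H^s$. Since $s > (d+5)/2 > d/2 + 1$, the coefficient matrix $g(u_0,\nabla u_0)$ lies in $C^2$, so the Hamilton flow of $\Delta_{g(u_0)}$ is well-defined and the qualitative nontrapping hypothesis is meaningful. Moreover, $u_0 \in H^s$ is small in scale-invariant norms outside any sufficiently large ball $B_R$, so the metric is a small perturbation of Euclidean there and trapping is confined to bicharacteristics meeting $B_R$. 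The first task would therefore be to extract a quantitative nontrapping parameter $\Lambda = \Lambda(u_0)$ measuring the maximal sojourn time of such bicharacteristics inside $B_R$, together with a quantitative transversality of the Hamilton flow at $\partial B_R$; the lifespan $T$ will be allowed to depend on both $\|u_0\|_{H^s}$ and $\Lambda$.

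The central analytic step is to establish a local energy decay estimate of the form
\[
\|v\|_{LE^1} \le C(\Lambda,\|u_0\|_{H^s}) \bigl( \|v(0)\|_{L^2} + \| (i\partial_t + \Delta_{g(u)} + A^j \partial_j + B) v \|_{LE^*} \bigr)
\]
for the linearized equation around a reference state $u$ whose metric remains uniformly close to $g(u_0)$, where $LE^1$ and $LE^*$ are the standard local energy and dual local energy spaces. Following the blueprint announced in the introduction, this is achieved by a phase-space multiplier construction: one produces a symbol $m(x,\xi)$ whose Poisson bracket with $g^{jk}\xi_j \xi_k$ is bounded below by a local energy weight along all bicharacteristics meeting $B_R$, with the $C^2$ norm of $m$ controlled quantitatively by $\Lambda$. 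Because we have no pointwise spatial decay of $u$, the multiplier cannot be a Morawetz-type physical-space weight alone; it must be built symbolically in phase space and handled with pseudodifferential calculus. The resulting constant $C(\Lambda,\|u_0\|_{H^s})$ is necessarily exponential in $\Lambda$, as highlighted in the introduction.

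With the linear estimate in hand, I would close a contraction-mapping argument in a space of type $L^\infty_t H^s_x \cap LE^1$. Two bookkeeping steps have to be threaded carefully: first, the nontrapping parameter must be propagated from $u_0$ to $u(t)$ on a short time interval $[0,T]$, with $T$ small depending on $\Lambda$, so that the linear constant can be frozen; second, the nonlinear error terms, produced by the cubic structure \eqref{gT} combined with Strichartz and the local energy norms, must carry enough smallness (in powers of $T$ or of $\|u\|_{LE^1}$) to absorb the exponentially large linear constant $C(\Lambda,\|u_0\|_{H^s})$ at each step of the iteration. The hardest part is precisely this ordering of large constants: the scheme is circular unless the cubic gain strictly outpaces the exponential linear loss, which is the mechanism that pins down the regularity threshold $s > (d+5)/2$.

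For part (b), where the metric depends on $u$ but not on $\nabla u$ and the principal operator $\partial_j g^{jk}(u,\bar u)\partial_k$ is in divergence form, the same scheme applies, but two derivatives are saved relative to (a). On the one hand, the divergence form yields a symmetric energy identity in which no derivative falls on $g$; on the other hand, the absence of $\nabla u$ in $g$ removes a derivative from the magnetic potential produced by linearization. These two gains together lower the threshold to $s > (d+3)/2$.
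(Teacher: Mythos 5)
Your high-level picture is right: quantitative nontrapping $\Lambda \sim L$, a phase-space multiplier construction for local energy decay with an exponentially large constant $e^{C(M)L}$, and a choice of $T$ small enough that a power of $T$ from the nonlinearity beats that exponential. That is indeed the scheme the paper runs (it develops it in full for the quadratic case in Sections~5--7 and then observes that the cubic case is identical once the $\ell^1$-spaces and Propositions~\ref{p-XXY+},~\ref{p:G} are replaced by the $\ell^2$-spaces and Propositions~\ref{p-XXXY+},~\ref{p:Gcubic}). Your heuristic for why (b) needs one less derivative than (a) is also essentially the paper's: (a) reduces to (b) by differentiating and passing to the system for $(u,\nabla u)$.

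However, two points in your write-up are genuinely off and would cause trouble if pursued literally.

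First, you propose to ``close a contraction-mapping argument in a space of type $L^\infty_t H^s_x \cap LE^1$.'' This is the highest regularity level, and for a quasilinear problem the data-to-solution map is generically \emph{not} contractive (nor even Lipschitz) there: differences of solutions lose a derivative because of the varying metric. The paper avoids this by the classical two-tier argument: one establishes \emph{uniform} $\ell^2 X^s$ bounds on the iterates (Proposition~\ref{p:it-uniform}), but proves \emph{contraction only in a strictly weaker topology} $\ell^2 X^\sigma$, $\sigma < s_0 - 1$ (Proposition~\ref{p:it-converge}), and then interpolates to get convergence in intermediate norms. Continuous dependence in the top norm is recovered afterwards by a frequency-envelope argument (Section~\ref{sec:contdep}), not by a fixed-point theorem. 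You also omit the paradifferential reformulation \eqref{eqn:quasi-para1}; working with the full linearized equation rather than $\partial_j T^w_{g^{jk}}\partial_k + T^w_b\cdot\nabla$ makes it much harder to treat the non-principal contributions perturbatively at the right regularity, and the paper's multilinear machinery (and the $\ell^2 X^s$, $\ell^2 Y^s$ spaces with their half-derivative smoothing duality) is built precisely for that paradifferential source term $G$.

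Second, your claim that the threshold $s > \frac{d+5}{2}$ is ``pinned down'' by the need for the cubic gain to outpace the exponential linear loss is not correct. The exponential loss is neutralized by the choice $T \lesssim e^{-C(M)L}$ alone, independent of $s$; that mechanism works for any $s$ above the threshold. The threshold itself comes from the trilinear $\ell^2 X \times \ell^2 X \times \ell^2 X \to \ell^2 Y$ estimates of Proposition~\ref{p-XXXY+}, which require $s > \frac{d+3}{2}$; this gives part (b), and adding one derivative for the reduction of (a) to (b) gives $s > \frac{d+5}{2}$. So the regularity restriction is a multilinear/Sobolev phenomenon, not a constant-ordering phenomenon.
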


\begin{rem}
  The well-posedness result in the statement of the theorems above has
  to be interpreted in the classical quasilinear fashion. Precisely,
  in the setting of Theorem~\ref{thm:main1}, it asserts that

\begin{enumerate}[itemsep=1ex,label=(\roman*)]
\item (Regular solutions) For large $\sigma$ and regular nontrapping
data $u_0 \in \ell^1 H^\sigma$ there exists a unique local nontrapping
solution $u \in C(\ell^1 H^\sigma)$ on a nonempty maximal time
interval $I = [0,T_{max}(u_0))$.

\item (Rough solutions) For $s$ as in the theorem and nontrapping data
$u_0 \in \ell^1 H^s$ there exists a unique local nontrapping
solution $u \in C(\ell^1 H^s) \cap \ell^1 X^s$ on a nonempty maximal time
interval $I = [0,T_{max}(u_0))$.  The spaces $l^1 X^s$, which capture
the space-time local energy decay structure, will be described in the next section. 

\item (Continuous dependence) The maximal time $T_{max}(u_0)$  is a lower semicontinuous 
function of $u_0$ in  the $\ell^1 H^s$ topology, and the data to solution map $v_0 \to v$
is continuous at $u_0$ from $\ell^1 H^s$ into $C([0,T];\ell^1 H^s) \cap \ell^1 X^s([0,T])$ 
for all $T < T_{max}(u_0)$. 
\end{enumerate}

The latter property allows one to alternately uniquely identify rough solutions as limits of regular solutions.
\end{rem}

\begin{rem}
  The existence time $T$ in the theorem is allowed to depend on the
  full profile of the initial data $u_0$, and not only on its
  size. This dependence will be made more clear later on. The
  difficulty is that the well-posedness depends not only on the size
  of the data, but also on the nontrapping property of the
  metric. Later we will introduce a more explicit parameter $L=L(u_0)$,
  which quantitatively measures nontrapping, and is stable 
with respect to small perturbations of $u_0$.  Then we will show that
  it suffices to choose $T \ll 1$ so that
\begin{equation}
|\log T| \gg_{\|u_0\|_{l^1 H^s}} L
\end{equation}
where the implicit dependence on the data size is polynomial.
\end{rem}

\begin{rem}

In order to define nontrapping, we require a well-defined Hamilton flow.
A sufficient condition which guarantees this is to have a $C^{1,1}$ metric,
and that is satisfied in the context of Theorem~\ref{thm:main1}. However, 
this is not guaranteed in the case when $s<\frac{d}{2}+2$, in Theorem \ref{thm:main2}.   Nevertheless,  we can instead guarantee $\nabla^2 g \in L^2(\R_{x_1}, L^\infty(\R^{d-1}_{x'}))$, as well as in any other rotated frame; this still suffices 
in order to define the flow as a bi-Lipschitz map. In turn, implementing this would require appropriate changes in Section~\ref{sec:LED1}; we omit these, and instead
refer the interested reader to \cite{Blair} where a similar analysis was conducted.

\end{rem}

We further remark that the short time large data result
cannot be obtained by scaling from the small data result. This is due
to the fact that the spaces used are inhomogeneous Sobolev spaces, and
spatial localization is not allowed due to the infinite speed of
propagation.  A reflection of this is the fact that the small data case is nontrapping, while in  the large data regime, one must also take into account
the existence of trapping.

The paper is organized as follows. In Section~\ref{sec:boot} we describe the space-time
function spaces in which we will solve \eqref{eqn:quasiquad} and
\eqref{eqn:quasiquad1}; these are identical to those in the small data
setting in \cite{MMT3,MMT4}.  In Section \ref{outline} we introduce some key notations,
including the main size parameters which govern our lifespan bound, and
give an overview of the proof. Section \ref{sec:mult} contains the
necessary multilinear and nonlinear estimates in order to close the
eventual bootstrap estimates; some of these are from \cite{MMT3,MMT4},
but the main bound for the paradifferential error term is new.
The stability of the nontrapping assumption will be discussed in Section~\ref{nontrap};
this is critical in order to propagate nontrapping to positive times.
In Section \ref{sec:LED1} we establish local energy decay for a
linear, nontrapping, inhomogeneous paradifferential version of the Schr\"odinger
equation.  Finally, in Section \ref{sec:proof}, we combine the above
estimates with the proper paradifferential decomposition of the
equation in order to conclude the proof of Theorem~\ref{thm:main1}.

\bigskip

{\sc Acknowledgments.} The first author was supported in part by
  U.S. NSF Grant DMS--1312874 and NSF CAREER Grant DMS-1352353.
   The third author is supported in part by the NSF grant
  DMS-1800294 as well as by a Simons Investigator grant from the
  Simons Foundation.  The authors also wish to thank 
  the Mathematical Sciences Research Institute for hosting two of them during part of this work.  We thank the anonymous referees for a careful reading of the result and for making helpful suggestions to improve the exposition.

\section{Recap of Function Spaces and Notations}
\label{sec:boot}

In this section we recall the definition of the main function spaces 
as well as some of their key properties. For this we follow our previous works, \cite{MMT3,MMT4}. 

We will use an inhomogeneous  Littlewood-Paley decomposition $\sum_{k\in \N} S_k=1$. We set $u_j = S_j u$ and
\[S_{\le N} f = \sum_{i=0}^N S_i f,\quad S_{\ge N}f = \sum_{i=N}^\infty S_i f, \quad S_{[N_1,N_2]} f =\sum_{i=N_1}^{N_2}S_i f.\]
When it is clear from the context, we may abuse notation and use $u_0 = S_0 u$.

Given a translation invariant Sobolev-type space $U$, we define $l^p_j U$ via
\begin{eqnarray*}
\| u \|_{l^p_j U}^p =  \sum_{Q \in \mathcal{Q}_j} \| \chi_Q u
  \|_U^p,  
\end{eqnarray*}
which generalizes the notion of $l^1_j L^2$ defined in the introduction.  We make the natural modification when $p=\infty$.  Upon replacing the sum over the cubes with an integral, the existence of translation invariant norms that are equivalent can easily be checked.
As noted previously in \cite{MMT3}, the smooth partition of compactly supported cutoffs
in the $l^1_j U$ spaces can be replaced by cutoffs which are frequency
localized when it is convenient. 

We recall briefly the local energy type space $X$
of functions on $[0,T] \times \R^d$, with norm
\[
  \| u \|_{X}  =   \sup_{l} \sup_{Q \in \mathcal{Q}_l} 
2^{-\frac{l}{2}} \| u \|_{L^2_{t,x} ([0,T] \times Q)}.
\]
Note, these spaces are dependent upon $T$, and hence the dependence upon the time interval in the estimates below will be treated with some care.

We define $Y\subset L^2_{t,x}([0,T] \times \R^d)$ to satisfy $X=Y^*$.  This will be the space in which we measure the forcing terms for the Schr\"odinger equation.  See, e.g., \cite{BRV}, \cite{MMT3} for more details on the construction of this atomic space.

We set
\[
X_j = 2^{-\frac{j}2} X \cap L^\infty L^2,
\]
which will incorporate the half-degree of smoothing into our local energy spaces.  We then add the $l^p$ spatial summation on the $2^j$ scale
to obtain the space $l^p_j X_j$ with norm
\begin{eqnarray*}
  \| u \|^p_{l^p_j X_j} = 
\sum_{Q \in \mathcal{Q}_j} \| \chi_Q u \|^p_{X_j}.
\end{eqnarray*}
We finally define the spaces $l^p X^s$ by
\begin{equation}
\label{l1Xdef}
  \| u \|^2_{l^p X^s}  =  \sum_j 2^{2js} \| S_{j} u \|^2_{l^p_j X_j}.
\end{equation}
For quadratic interactions, we shall use $\lX^s$ to bound the high frequencies exterior to a large ball in our solutions to \eqref{eqn:quasiquad},
\eqref{eqn:quasiquad1} with nontrapping $l^1 H^s$ data.  For cubic interactions, we shall use $p=2$ and data in $l^2 H^s\approx H^s$.

For the inhomogeneous terms at frequency $2^j$, we shall use
\[
Y_j = 2^{\frac{j}2} Y + L^1 L^2
\]
which has norm
\[
\| f \|_{Y_j} = \inf_{f =  2^{\frac{j}2} f_1 + f_2} \|f_1\|_{Y} + 
\|f_2\|_{L^1 L^2}.
\]
We then similarly consider
\begin{eqnarray}
\label{l1Ydef}
  \| f \|^2_{l^p Y^s} =  \sum_j  2^{2js} \| S_{j} f \|^2_{l^p_j Y_j},
\end{eqnarray}
where $p=1$ will be utilized for the case of quadratic interactions and $p=2$ for the cubic case.

We also record the spaces $X^s$ without the summability. These are given by the norm
\begin{equation}
\label{Xdef}
  \| u \|^2_{X^s}  =  \sum_j 2^{2js} \| S_{j} u \|^2_{X_j}.
\end{equation}
Similarly, we define $Y^s$ via
\begin{equation}
\label{Ydef}
  \| f \|^2_{Y^s}  =  \sum_j 2^{2js} \| S_{j} f \|^2_{Y_j}.
\end{equation}

In the regimes where we can apply paradifferential analysis, it is convenient to present
our bilinear and nonlinear estimates using the method of frequency
envelopes, which we recall below. For a Sobolev-type space $U$ so that
\[
\|u\|_{U}^2 \sim \sum_{k=0}^\infty \|S_k u\|_{U}^2 
\]
 a positive sequence $c_j$  is called an admissible frequency envelope for $u$ in $U$ provided that it
\begin{enumerate}
\item controls the dyadic $U$ size,
\[
\| S_k u\|_{U} \lesssim c_k,
\]
\item is controlled by the $U$ norm,
\[
\sum_{k \in \N} c_k^2 \lesssim \|u\|_{U}^2
\]

\item is slowly varying to the left,
\[
c_j \geq 2^{\delta(j-k) } c_k, \qquad j < k,
\]
and 
\item is uniformly varying to the right,
\[
c_j \geq 2^{\sigma (k-j) } c_k, \qquad j > k
\]
for a fixed (large) $\sigma$.
\end{enumerate}
These properties are easily adapted to the case when the $l^2$
dyadic summability is replaced by $l^p$ with $1 \leq p < \infty$.

An admissible frequency envelope 
always exists, say by 
\begin{equation}\label{freqEnv}
c_j =  \max_{k > j} 2^{-\delta |j-k|} \|S_k u\|_{U} +  \max_{k < j} 2^{-\sigma |j-k|} \|S_k u\|_{U} .
\end{equation}
In the sequel we will use frequency envelopes for the spaces $l^pH^s$, $l^pX^s$ and $l^pY^s$ for $p=1,2$.


\section{Outline of the Proof}
\label{outline}

Let us briefly outline the ideas we will pursue below for the case of
quadratic interactions and the equation \eqref{eqn:quasiquad1}.  The cubic case will follow similarly.  We seek to solve  the equation 
\begin{equation}
\label{eqn:quasiquad1-re}
\left\{ \begin{array}{l}
i u_t + \p_j g^{jk} (u,\bar u)  \p_ku = 
F(u,\bar u,\nabla u,\nabla \bar u) , \ u:
\RR \times \RR^d \to \CC^m, \\ \\
u(0,x) = u_0 (x),
\end{array} \right. 
\end{equation}
in $\ell^1 H^s$ for $s > s_0 > \frac{d}{2} + 2$.
Here we have explicitly included the dependence upon $u$ and $\bar u$ separately since the large data dynamics will depend upon them both more delicately than  in the small data case.  
\bigskip

{\bf 1. The linearized and paradifferential equation.}
An important role in the analysis will be played by the linearized equation,
which has the form
\begin{equation}
\label{eqn:quasi-lin}
\left\{ \begin{array}{l}
i v_t + \p_j g^{jk}  \p_kv + b^j \partial_j v +
\tilde b^j \partial_j \bar v + c  v 
+ \tilde c \bar v = 0,
 \\ \\
v(0,x) = v_0 (x),
\end{array} \right. 
\end{equation}
where the coefficients $g$, respectively $b^j$, $\tilde b^j$, $c$,
$\tilde c$, are smooth nonlinear expressions in $u$, respectively
$u,\nabla u$, which can be explicitly calculated in terms of $g$ and
$F$.   In particular, we have
\begin{equation} \label{bc-def}
\begin{split}
b^j = \partial_u g^{jk} \partial_k u  - \partial_{(\nabla
  u)_j} F, \qquad  {\tilde b}^j = \partial_{\bar u} g^{jk} \partial_k u - \partial_{(\nabla \bar u)_j} F,
\\
c =\partial_j (\partial_u g^{jk}) \partial_k u
- \partial_{ u} F ,
 \qquad  {\tilde c}  =  \partial_j (\partial_{\bar u} g^{jk}) \partial_k u - \partial_{ \bar u} F
\end{split}
\end{equation}
for a fixed $u \in l^1 X^s$.

From the linearized equation we extract its associated 
linear paradifferential flow
\begin{equation}
\label{eqn:quasi-para}
\left\{ \begin{array}{l}
i \partial_t w + \p_j T_{g^{jk}}   \p_k w + T_{b^j} \partial_j  w + 
T_{\tilde b^j} \partial_j \bar w = \tilde f,
 \\ \\
w(0,x) = w_0 (x),
\end{array} \right. 
\end{equation}
where we take the paraproduct operator to be
\begin{equation}
\label{paraproduct}
T_a b = \sum_{N \geq 4} S_{\leq N-4} a S_N b.
\end{equation}
For some of the analysis it will be more convenient  to use the Weyl quantization 
for the paraproduct, which is denoted as follows:
\[
T^w_a b = OP^w\Bigl(\sum_{N \geq 4} S_{\leq N-4} a(x) s_N (\xi)\Bigr).
\]

If $v$ solves \eqref{eqn:quasi-lin} then it also solves \eqref{eqn:quasi-para} with
\[
\tilde f = (\p_j T_{g^{jk}} (u,\bar u)  \p_k w - \p_j g^{jk}  \p_k w ) + (T_{b^j} \partial_j  w  - b^j \partial_j w) + ( T_{\tilde b^j} \partial_j \bar w - \tilde b^j \partial_j \bar w) - cw - \tilde c \bar w.
\]
Here heuristically $\tilde f$ contains only high-high frequency interactions; e.g. in the multilinear case, the two highest frequencies must always\footnote{ This is not entirely accurate in what we do. Instead, $\tilde f$
is also allowed to contain low-high interactions as long as the high frequency factors are undifferentiated; this makes such terms perturbative.}
be balanced. Because of this, its contribution will always be treated perturbatively.

\bigskip

{\bf 2. Rewriting the equation.}  
Based on the expressions above for the linearized equation and its paradifferential truncation,
we write the full nonlinear equation \eqref{eqn:quasiquad1-re} in a paradifferential form,
namely 
\begin{equation}
\label{eqn:quasi-para1}
\left\{ \begin{array}{l}
i \partial_t u + \p_j T^w_{g^{jk}}  \p_k u + T^w_{b^j} \partial_j  u + 
T^w_{\tilde b^j} \partial_j \bar u =  G,
 \\ \\
u(0,x) = u_0 (x).
\end{array} \right. 
\end{equation}
Here the nonlinearity $G = G(u,\bar u, \nabla u, \nabla \bar u)$ is no longer purely algebraic, as it involves 
frequency localizations. The key idea in our proof of the local well-posedness result is that $G$ plays 
a perturbative role.  So we have it, we record that 
\begin{equation*}
G  (u, \bar u , \nabla u , \nabla \bar{u})  = F(u, \bar u , \nabla u , \nabla \bar{u}) - \partial_j (g^{jk}  - T^w_{g^{jk}}) \partial_k u + T^w_{b_j} \partial_j u + T^w_{\tilde b_j} \partial_j \bar u 
\end{equation*}
where $b$ and $\tilde b$ contain both contributions arising from $g$ and from $F$, see \eqref{bc-def}.

The solutions will be constructed via an iterative scheme, where 
we set $u^{(0)} = 0$, and successively define $u^{(n+1)}$ as the solution to
the linear equation
\begin{equation}
\label{eqn:nliter}
\left\{ \begin{array}{l}
 \left(i \p_t +   \p_j T^w_{g^{jk,(n)}} \p_k + T^w_{b^{(n)}}\cdot  \nabla 
\right) u^{(n+1)} +    T^w_{\tilde b^{(n)}} \cdot \nabla  \bar u^{(n+1)} = G(u^{(n)}, \nabla u^{(n)}),
 \\ \\
 u^{(n+1)}(0)  = u_{0}
\end{array} \right. 
\end{equation}
where
\[
g^{(n)} = g(u^{(n)}), \qquad b^{(n)} = b(u^{(n)}, \nabla u^{(n)}), \qquad \tilde b^{(n)} = \tilde b(u^{(n)}, \nabla u^{(n)}).
\]
Each of $G, g^{(n)}, b^{(n)}, \tilde{b}^{(n)}$ also depends on the conjugates of the solution, but this is suppressed here.
In order to guarantee the convergence of this scheme we will carefully
choose time $T$ small enough, depending on the initial data
profile.

\bigskip  
 
{\bf 3. Quantifying nontrapping: the parameters $M,R,L$.}  For the purpose of characterizing
the nontrapping properties of the metric $g(u_0)$ we do not need the
full $H^s$ regularity. Instead we will use a smaller exponent $s_0$
so that
\begin{equation} \label{s0}
\frac{d}2 + 2 < s_0 < s.
\end{equation}
 Its
choice within these bounds is not important, but we fix it once and
for all. The gap between $s_0$ and $s$ will be critical in order to
propagate the nontrapping property.

The first parameter $M$  we use to describe nontrapping 
measures the size of the data,
\begin{equation}\label{M}
M = \| u_0\|_{\lH^{s_0}}.
\end{equation}

Outside a compact spatial region $B= B(x_0,R)$ where both $x_0$ and $R$ depend on $u_0$,
the metric $g_{ij}(u_0)$ will have a small $l^1 H^{s_0}$ norm, and
thus be nontrapping.  The nontrapping assumption guarantees that all
geodesics intersecting $B$ will eventually leave $2B$ at both ends, and never return to 
$B$ once leaving $2B$. We denote by $L \gtrsim R$ the maximum Euclidean length of any such geodesic within $2B$.   The nontrapping condition is then shown to be stable with respect to perturbations, $g \to g + \delta g$,
of the metric which satisfy an exponential smallness condition
\begin{equation}\label{small-pert}
\| \delta g\|_{l^1 X^{s_0}} \lesssim e^{-C_0(M) L}.
\end{equation}
This is proved in Section~\ref{nontrap}.

\bigskip

{\bf 4.  Nontrapping and norm inflation.}
In order to carry out the above iteration, we need to consider 
energy estimates and local energy decay for  the linear paradifferential flow 
\eqref{eqn:quasi-para}.
Precisely, we would like to have bounds of the form
\begin{equation}\label{le}
\| w\|_{l^1 X^{\sigma}} \leq C (\|w_0\|_{\ell^1 H^\sigma} + \|f\|_{l^1 Y^\sigma}), \qquad 0 \leq \sigma 
\end{equation}
in a time interval $[0,T]$ where $T$ depends only on the initial data
$w_0$.

However, even  if $w_0 = 0$, $\sigma = 0$ and $T$ is arbitrarily small, the energy bounds for this system will 
exhibit $L^2$ growth,  due both to the large metric in the compact set $B(x_0,R)$ and to the large coefficient $b$ for the first order term. 
Hence, the best we could hope for is a bound of the type
\begin{equation}\label{le0}
\| w\|_{X^0} \lesssim e^{C(M) L} (\|w_0\|_{L^2} + \|f\|_{Y^0}),
\end{equation}
with the only redeeming feature that only a lower regularity $\lX^{s_0}$ bound for $u$, which occurs in the coefficients,
is needed. Naturally, the constant $C$ in \eqref{le} would have to
be at least as large as the exponential in \eqref{le0}.  Indeed, in
Section~\ref{sec:LED1}  we establish that the bound \eqref{le0} holds.

The key to handle  this exponential growth is to restrict to a very short 
time interval $[0,T]$, with $T$ satisfying
\begin{equation}
T \lesssim e^{-C(M) L}.
\end{equation}
To balance the choice of $T$ and the exponential growth we divide and
conquer.  We first prove a high frequency energy estimate via positive
commutator methods that control the high frequencies while allowing low
frequency errors; this part is independent of the length of the time
interval.  Then, we are able to use the short time in a more direct
fashion to control the contribution from the the low frequencies.

\bigskip

{\bf 5. Nontrapping and high frequency energy estimates.}  The first
step in the proof of \eqref{le0} is to use a positive commutator
method in order to establish local energy bounds with low frequency errors,
\begin{equation}\label{le-high}
\|w\|_{X^0} \lesssim e^{C(M) L} (\|w_0\|_{L^2} + \|f\|_{Y^0} + \|w\| _{L^2L^2})
\end{equation}
with no restriction on the time $T$. This is done in three stages:

(i) Bounds for incoming rays. Here we estimate the energy along
geodesics which approach the compact region $2B$ without any norm
inflation by introducing a suitable incoming multiplier $Q_{in}$.

(ii) Bounds in a compact set $2B$. This is where we bound the local
energy norm of the solution in $2B$ in terms of the incoming part,
using the nontrapping condition to construct a suitable multiplier
$Q_{comp}$. This is where the norm inflation occurs.

(iii) Global bounds. Here we use the local energy estimate in $2B$ in order to 
produce a global exterior bound, which follows very similarly to the small data metric perturbation theoretic arguments of \cite{MMT3,MMT4}.

\bigskip

{\bf 6. Low frequency estimates for short time.}
The last step in the proof of \eqref{le0}  is to complement the above high frequency bound
 with an estimate for the  low frequencies.  This is quite trivial and is obtained by 
H\"older's inequality in time, which gives
\begin{equation} \label{le-low}
\|w\|_{L^2 L^2} \lesssim T^\frac12   \| w\|_{L^\infty L^2} \lesssim T^\frac12   \| w\|_{X^0}.
\end{equation}

\bigskip

{\bf 7. Uniform bounds for the iteration scheme.} 
The difficulty we face here is that, in view of the bound \eqref{le0}, 
the best we can expect of the sequence $u^{(n)}$ in $\ell^1 X^s$ is a bound 
with exponential growth of the  form
\begin{equation} \label{s-bd}
\| u^{(n)} \|_{\ell^1 X^s} \lesssim  e^{C(M) L} \| u_0\|_{\ell^1 H^s} .
\end{equation}
Such a bound cannot be directly obtained in a self-contained inductive argument
and is predicated on the additional assumption that 
\[
\| u^{(n)} \|_{\ell^1 X^{s_0}} \lesssim M.
\]

To avoid a circular argument, we will obtain this last bound not directly from \eqref{s-bd},
but rather by interpolating with a lower regularity  bound but which has a $T$ factor,
\[
\| u^{(n)} - u_0 \|_{\ell^1 L^\infty L^2} \lesssim T M.
\]
After interpolation, we will be able to leverage the remaining (small) power of $T$ 
against the exponential provided that
\begin{equation}
T \ll_{M_s} e^{-C(M) L}, \qquad M_s =  \| u_0\|_{\ell^1 H^s} .
\end{equation}

\bigskip

{\bf 8. Lipschitz bounds and weak convergence for the iteration scheme.}
Here we consider the difference equations for $u^{(n+1)} - u^{(n)}$ and use the 
bound \eqref{le0} directly to prove the convergence of the iteration scheme in 
$\ell^1 X^0$. Given the uniform $\ell^1 X^s$ bound, we also obtain convergence in all 
intermediate topologies. The same type of argument also yields
Lipschitz dependence of the solutions on the initial data in the weaker topology, 
and in particular  uniqueness.

\bigskip

{\bf 9.   Frequency envelopes and continuous dependence.  }
Frequency envelope bounds are only needed at high frequency. They are derived 
from similar frequency envelope bounds for the paradifferential equation and 
allow us to (i) propagate higher regularity and (ii) prove  continuous dependence on the
initial data in the strong topology. 
\bigskip

\section{Multilinear and nonlinear estimates in $\lX$ type spaces}
\label{sec:mult}

In this section we recall the main bilinear and nonlinear estimates
from \cite{MMT3}, and add several related bounds that can be derived from them.

\subsection{Bilinear and Moser estimates}
Our aim here is to recall some of the estimates in \cite{MMT3},
as well as to provide some improvements adapted to the context of this paper.
We begin with bounds in $\lX^s$ spaces, where we first recall the dyadic bilinear bounds from \cite{MMT3}. These are contained within the proof of Proposition~3.2 there:

\begin{lemma}
The following bilinear estimates hold in $l^1_j X_j$ spaces:

a) High-low interactions $j < k-4$:
\begin{equation}\label{XX-hl}
\| S_j u S_k v\|_{l^1_k X_k} \lesssim 2^{\frac{jd}{2}} \|u \|_{l^1_j X_j} 
\| v\|_{l^1_k X_k} ,
\end{equation}

b) Balanced interactions, $|j-k| \leq 4$:
\begin{equation}\label{XX-hh}
\| S_i(S_j u S_k v)\|_{l^1_i X_i} \lesssim 2^{dk} 2^{-\frac{di}{2}} \|u \|_{l^1_j X_j} 
\| v\|_{l^1_k X_k}. 
\end{equation}
\end{lemma}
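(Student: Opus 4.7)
Both bounds follow the same template: split the $X_k$ norm as $X_k=2^{-k/2}X\cap L^\infty L^2$, and in each piece apply Bernstein, H\"older, and a regrouping of cubes at an appropriate scale. For part (a), with $j<k-4$ the product $S_j u\cdot S_k v$ inherits its scale-$2^k$ spatial structure from $S_k v$, and the output cubes $Q\in\mathcal Q_k$ align with the $l^1_k$ structure on the right. Fixing $Q$, I would pull $S_j u$ out in $L^\infty_{t,x}$:
\[
\|\chi_Q(S_j u\cdot S_k v)\|_{L^\infty L^2}\leq \|S_j u\|_{L^\infty_{t,x}}\,\|\chi_Q S_k v\|_{L^\infty L^2},
\]
with Bernstein at frequency $2^j$ giving $\|S_j u\|_{L^\infty_{t,x}}\lesssim 2^{jd/2}\|S_j u\|_{L^\infty L^2}\lesssim 2^{jd/2}\|u\|_{l^1_j X_j}$, the last step by subadditivity of $\|\cdot\|_{L^\infty L^2}$ over the scale-$2^j$ partition. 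The $2^{-k/2}X$ contribution is handled identically by again extracting $S_j u$ in $L^\infty_{t,x}$ and keeping the $X$-norm of $\chi_Q S_k v$, and summing over $Q\in\mathcal Q_k$ produces the factor $\|v\|_{l^1_k X_k}$, yielding \eqref{XX-hl}.

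For part (b), since the output frequency $2^i$ may sit much below $2^k$, the cubes $Q\in\mathcal Q_i$ are finer than the native scale $2^k$ of the two factors, and the target gain $2^{-id/2}$ on top of $2^{dk}$ arises from a dual Bernstein on the output projector combined with a count of children. The plan is to group $Q\in\mathcal Q_i$ into parent cubes $P\in\mathcal Q_k$ (each carrying $2^{(k-i)d}$ children), apply Cauchy--Schwarz in $x$ on each parent
\[
\|\chi_P(S_j u\cdot S_k v)\|_{L^\infty L^1}\lesssim \|\chi_P S_j u\|_{L^\infty L^2}\,\|\chi_P S_k v\|_{L^\infty L^2},
\]
combine it with the dual Bernstein bound $\|S_i g\|_{L^2}\lesssim 2^{id/2}\|g\|_{L^1}$, and use the near-locality of $S_i$ at kernel scale $2^{-i}\ll 2^i$ to restrict $\chi_Q S_i(\cdot)$ to the parent $P\supset Q$ modulo Schwartz tails. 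This gives, for every child $Q\subset P$,
\[
\|\chi_Q S_i(S_j u\cdot S_k v)\|_{L^\infty L^2}\lesssim 2^{id/2}\|\chi_P S_j u\|_{L^\infty L^2}\|\chi_P S_k v\|_{L^\infty L^2}.
\]
Summing over the $2^{(k-i)d}$ children gives the combined factor $2^{(k-i)d}\cdot 2^{id/2}=2^{kd-id/2}$, and summing over $P\in\mathcal Q_k$ with $\sum_P a_P b_P\leq \|a\|_{l^2_k}\|b\|_{l^2_k}\leq \|u\|_{l^1_j X_j}\|v\|_{l^1_k X_k}$ (using $j\sim k$ and $\|\cdot\|_{l^2}\leq\|\cdot\|_{l^1}$) yields \eqref{XX-hh}. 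The $2^{-i/2}X$ component is treated identically using the $X$-norm in place of $L^\infty L^2$, since $X$ is also an $L^2_{t,x}$-based norm enjoying the same Bernstein and locality properties.

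The main technical obstacle is the Schwartz-tail bookkeeping for $S_i$: its kernel is not compactly supported, so $\chi_Q S_i f$ receives contributions from $f$ outside a neighborhood of $Q$ with rapid but only polynomial decay. This is handled by the standard weighted summation, splitting $f$ across all parent cubes and using the kernel decay to beat any polynomial weight; alternatively one can work with frequency-localized cutoffs as allowed by the remark in Section~\ref{sec:boot}, avoiding the tails altogether. Either route is entirely routine and parallels the treatment in the proof of Proposition~3.2 of \cite{MMT3}.
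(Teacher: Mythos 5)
The paper does not actually prove this lemma; it recalls the two dyadic bounds from the proof of \cite[Prop.~3.2]{MMT3}, so there is no in-paper argument to compare against. Your self-contained argument is essentially correct: the frequency/cube accounting is right, the $L^\infty L^\infty$ extraction in part~(a) is exactly the Bernstein bound \eqref{X-infty}, and the parent--child regrouping plus dual Bernstein $\|S_i g\|_{L^2}\lesssim 2^{id/2}\|g\|_{L^1}$ plus fixed-time Cauchy--Schwarz is the right mechanism for part~(b). The exponents $2^{(k-i)d}\cdot 2^{id/2}=2^{dk-di/2}$ check out, and the $l^1\subset l^2$ Cauchy--Schwarz in $P$ is fine since $|j-k|\le 4$ makes $\mathcal Q_j$ and $\mathcal Q_k$ comparable.

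One place where the writeup is too quick, and you should be aware of it: the final sentence of part~(b), ``the $2^{-i/2}X$ component is treated identically using the $X$-norm in place of $L^\infty L^2$,'' is not literally true. The dual Bernstein inequality you invoke is a fixed-time $L^2_x\leftarrow L^1_x$ bound, and the $L^\infty L^2$ estimate closes because you can take $\sup_t$ at the end. The $X$ norm is a sup over cube scales of normalized $L^2_{t,x}$ norms, so there is no symmetric analogue where you just replace every $L^\infty L^2$ by $X$; in particular, Cauchy--Schwarz in $x$ followed by $L^2_t$ produces one factor in $L^\infty_t L^2_x$ and the other in $L^2_{t,x}$, not two $X$ norms. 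The correct move is an \emph{asymmetric} split: unwind the definition of $\|\chi_Q S_i(\cdot)\|_X$ as a sup over $Q'\in\mathcal Q_l$; for each $Q'$, use the $S_i$-locality and fixed-time dual Bernstein to reduce to $\|\chi_{2Q'}S_j u\|_{L^2_x}\|\chi_{2Q'}S_k v\|_{L^2_x}$; then bound the time integral by $\|\chi_{2Q'}S_j u\|_{L^\infty_t L^2_x}\,\|\chi_{2Q'}S_k v\|_{L^2_{t,x}}$, and recognize the second factor as $\lesssim 2^{l/2}\|\chi_{P}S_k v\|_X\lesssim 2^{l/2-k/2}\|\chi_P S_k v\|_{X_k}$ directly from the definition of the $X$ norm. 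Multiplying by $2^{i/2}$ and summing over children and parents then closes with the extra harmless factor $2^{(i-k)/2}\le 1$. This uses exactly the tools you list, so it is a precision issue rather than a structural gap, but it is worth spelling out since the naive ``replace $L^\infty L^2$ by $X$'' would stall.
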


We note that in case (a) it suffices in effect to bound the low frequency factor in $L^\infty$, as the $l^1_k X_k$ norms depend only on the pointwise size of functions.
For the $L^\infty$ norm, on the other hand, we have the Bernstein-type inequality
\begin{equation}\label{X-infty}
\| S_j u \|_{L^\infty L^\infty} \lesssim 2^{\frac{jd}2} \| u \|_{l^1_j X_j}.  
\end{equation}

We continue with a refinement of \cite[Prop. 3.1(a)]{MMT3}:
\begin{prop}
\label{p:XX+}
a)  Let $s > \frac{d}2$. Then the $\lX^s$ spaces satisfy the
bilinear estimates
 \begin{equation}\label{u_squared}
\| u v\|_{\lX^\sigma} \lesssim \|u\|_{\lX^\sigma} 
\|v\|_{\lX^s} + \|u\|_{\lX^s} 
\|v\|_{\lX^\sigma}, \qquad d-s \leq \sigma,
 \end{equation}
 respectively
 \begin{equation}\label{u_squared+}
\| u v\|_{\lX^\sigma} \lesssim \|u\|_{\lX^\sigma} \|v\|_{\lX^s}, \qquad d-s \leq \sigma \leq s.
  \end{equation}

b) For all smooth $F$ with $F(0)= F'(0) = F''(0)=0$ we have the Moser-type 
estimate
\begin{equation}\label{moser}
\| F(u)\|_{\lX^\sigma} \lesssim   \|u\|_{\lX^\sigma} \|u\|_{\lX^s}^2 c(\|u\|_{L^\infty}), \qquad d-s \leq \sigma, 
  \end{equation}
as well as the 
difference  estimates
  \begin{equation}\label{moser+}
\| F(u) - F(v)\|_{\lX^\sigma} \lesssim   \|u-v\|_{\lX^\sigma}     (\|u\|_{\lX^s}+  \|v\|_{\lX^s})^2 c(\|u\|_{L^\infty}, \| v \|_{L^\infty}), 
\quad d-s \leq \sigma \leq s.
\end{equation}
\end{prop}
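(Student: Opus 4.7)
The plan is to follow the paradifferential strategy of the proof of \cite[Prop.~3.1]{MMT3}: perform a two-sided Littlewood--Paley decomposition $u = \sum_j S_j u$, $v = \sum_k S_k v$, split the product $uv = \sum_{j,k} S_j u \, S_k v$ into the three standard interaction regimes (low-high $j < k-4$, high-low $j > k+4$, balanced $|j-k| \leq 4$), apply the dyadic bilinear bounds \eqref{XX-hl} and \eqref{XX-hh} inside each regime, and then sum in the external $l^2$ weighted dyadic index using frequency envelopes. The building blocks are already at hand; the work is to track the sharp dependence on $\sigma$.

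For part (a), the low-high piece contributes $S_l(uv) \approx \sum_{j<l-4} S_j u \, S_l v$ and \eqref{XX-hl} produces a factor $2^{jd/2}$. Pairing this with the $s$-envelope $a_j$ of $u$ and the $\sigma$-envelope $b_l$ of $v$ gives the geometric sum $\sum_{j<l} 2^{(d/2-s)j}$, which converges because $s > d/2$; a Schur-test or standard frequency-envelope argument then yields $\|u\|_{\lX^s}\|v\|_{\lX^\sigma}$. Symmetrically the high-low regime yields $\|u\|_{\lX^\sigma}\|v\|_{\lX^s}$, accounting for the two terms in \eqref{u_squared}. In the balanced case, \eqref{XX-hh} contributes $2^{dk - di/2}$ at output frequency $2^i$ with $k \geq i - C$, and after inserting envelopes the net exponent at scale $k$ becomes $2^{(d - s - \sigma)k}$; the sum over $k \geq i$ converges precisely when $\sigma \geq d-s$, which is how the hypothesis emerges.

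For the refinement \eqref{u_squared+} with $\sigma \leq s$, the balanced and high-low pieces (where the high frequency sits on $u$) yield $\|u\|_{\lX^\sigma}\|v\|_{\lX^s}$ directly. For the low-high piece (high frequency on $v$) one must redistribute the regularity: use the $\sigma$-envelope on $u$ rather than the $s$-envelope, which now gives $\sum_{j<l} 2^{(d/2-\sigma)j}$. If $\sigma > d/2$ this sum converges geometrically; otherwise one uses the constraint $\sigma \geq d-s$ to force summability by absorbing the excess into the $s$-envelope of $v$ via the inequality $\sigma \leq s$. The condition $d-s \leq \sigma \leq s$ thus identifies the exact range where the asymmetric form holds.

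For part (b), I would use Taylor's theorem to write $F(u) = u^3 G(u)$, with $G(u) = \tfrac{1}{2}\int_0^1 (1-t)^2 F'''(tu)\,dt$ smooth. Iterating \eqref{u_squared+} three times, peeling off one factor of $u$ at a time (the first in $\lX^\sigma$, the remainder in $\lX^s$), yields $\|u^3\|_{\lX^\sigma} \lesssim \|u\|_{\lX^\sigma}\|u\|_{\lX^s}^2$. The smooth composition $G(u)$ is handled by an auxiliary Moser estimate in $\lX^s$ (a version of which is in \cite{MMT3}), which controls the $\lX^s$ seminorms of $G(u)$ by $c(\|u\|_{L^\infty})$ times polynomial expressions in $\|u\|_{\lX^s}$; this is where the $c(\|u\|_{L^\infty})$ factor in \eqref{moser} comes from. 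For \eqref{moser+}, write $F(u) - F(v) = (u-v)\int_0^1 F'(v + t(u-v))\,dt$, and use $F'(0)=F''(0)=0$ to factor $F'(w) = w^2 \tilde{G}(w)$; then the integrand is a product of two factors each controlled by $\|u\|_{\lX^s} + \|v\|_{\lX^s}$ and a smooth composition handled as above, while \eqref{u_squared+} lets $(u-v)$ absorb the $\sigma$-derivatives. The main obstacle is the frequency-envelope bookkeeping that extracts the sharp exponent range $d-s \leq \sigma \leq s$ in the balanced regime, and securing the auxiliary composition estimate for $G(u)$ with the correct $c(\|u\|_{L^\infty})$ dependence; without this last ingredient the bilinear estimate alone does not close the Moser bound.
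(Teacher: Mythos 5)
Your part (a) is essentially the paper's argument: both use the Littlewood--Paley trichotomy together with the dyadic bounds \eqref{XX-hl} and \eqref{XX-hh}, and the exponent bookkeeping you describe (including the identification of $\sigma \geq d-s$ from the balanced case and $\sigma \leq s$ from the low-high case for \eqref{u_squared+}) matches the paper. Your treatment of \eqref{moser+}, via $F(u)-F(v) = (u-v)\int_0^1 F'((1-t)u+tv)\,dt$ combined with \eqref{u_squared+}, is also exactly the paper's approach and is correct.

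However, there is a genuine gap in your proof of \eqref{moser}. Your plan is to factor $F(u) = u^3 G(u)$ and iterate \eqref{u_squared+}, peeling off one factor of $u$ at a time, the first in $\lX^\sigma$. But the very first peel already requires
\[
\| u \cdot (u^2 G(u)) \|_{\lX^\sigma} \lesssim \| u \|_{\lX^\sigma}\, \| u^2 G(u) \|_{\lX^s},
\]
which is \eqref{u_squared+} and is only available for $d-s \leq \sigma \leq s$. The estimate \eqref{moser} is claimed, and is needed in the paper (for higher regularity of the nonlinear evolution), for \emph{all} $\sigma \geq d-s$, including $\sigma > s$. In that range your scheme does not close: switching to \eqref{u_squared} gives $\|u\|_{\lX^\sigma}\|u^2G(u)\|_{\lX^s} + \|u\|_{\lX^s}\|u^2G(u)\|_{\lX^\sigma}$, whose second term is essentially what you are trying to bound, and iterating eventually forces you to control $\|G(u)\|_{\lX^\sigma}$ with $\sigma > s$ and $G(0)\neq 0$, which requires exactly the general Moser bound you are trying to prove. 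The paper avoids this circularity by a different mechanism: it uses the continuous Littlewood--Paley unwinding
\[
S_k F(u) = S_k F(u_0) + \int_0^\infty S_k\bigl(u_j\, F'(u_{<j})\bigr)\, dj,
\]
estimates $f_j = F'(u_{<j})$ in $\lX^s$ and $\lX^{s+N}$ with the gain $2^{Nj}$ (the bounds \eqref{fjMoser}, \eqref{fjMoser+}), and then runs the trichotomy on $u_j f_j$. The crucial point is that in this quasilinear form the high-frequency factor $u_j$ appears \emph{linearly and undifferentiated}, so the $\sigma$-regularity can be placed entirely on it. That structural idea is what makes $\sigma > s$ accessible and is missing from your proposal.

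A secondary point: the "obstacle" you identify (frequency-envelope bookkeeping and the $c(\|u\|_{L^\infty})$ constant) is real but is not the hard part; those are standard. The hard part is exactly the $\sigma>s$ case, and it needs the unwinding argument rather than algebraic factoring.
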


Here in (b) we are assuming that $F(u)$ is cubic in $u$ near $u = 0$ just for 
convenience, as the linear part is uninteresting and the quadratic part 
is dealt with in \eqref{u_squared} and \eqref{u_squared+}.

\begin{proof}
The bilinear estimates \eqref{u_squared} and \eqref{u_squared+} follow almost directly from \cite[(3.1)]{MMT3}.  For the high-high frequency interactions, using \eqref{XX-hh} we get
\begin{equation}\label{partAref}
\Bigl\|\sum_{\substack{i,j\ge k-4\\ |i-j|\le 4}} S_k(S_i u  S_j v) \Bigr\|_{\lX^\sigma} \lesssim  \sum_{i\ge k-4} 2^{(d-s - \sigma) i + k (\sigma-\frac{d}{2})}  \| u\|_{l^1 X^\sigma}  \|  v \|_{l^1 X^s}.
\end{equation}
For these interactions, the bound \eqref{u_squared+} follows provided $\sigma\ge d-s$.  And hence \eqref{u_squared} follows trivially.
For the low-high frequency interactions, by \eqref{XX-hl} we have
\[\Bigl\|\sum_{\substack{j<k-4\\|i-k|\le 4}} S_k (S_i u S_j v)\Bigr\|_{l^1 X^\sigma} \lesssim \sum_{j<k-4} 2^{(\frac{d}{2}-s)j} \sum_{|i-k|<4}\|S_i u\|_{l^1X^\sigma} \|v\|_{l^1 X^s}.
\]
In order to obtain \eqref{u_squared+}, and hence \eqref{u_squared}, for these interactions, we only require $s>\frac{d}{2}$.  In the remaining case, from \eqref{XX-hl} we instead obtain
\[ \Bigl\|\sum_{\substack{i<k-4\\|j-k|\le 4}} S_k (S_i u S_j v)\Bigr\|_{l^1 X^\sigma} \lesssim \sum_{i<k-4}
2^{(\frac{d}{2}-\sigma)i} 2^{k(\sigma-s)} \|u\|_{l^1X^\sigma} \sum_{|j-k|<4} \|S_j v\|_{l^1X^s}. 
\]
Here, when $\sigma\le \frac{d}{2}$, again only require $s>\frac{d}{2}$.  When, however, $\sigma > \frac{d}{2}$, in order to obtain \eqref{u_squared+}, we additionally require that $\sigma\le s$.  This in turn justifies the need for the symmetric term in \eqref{u_squared} when such an upper bound on the range of $\sigma$ is not assumed.
In \cite{MMT3}, the corresponding
 estimates were phrased in a more precise way using the concept of frequency envelopes, which one could also do here.

The Moser-type estimate \eqref{moser+} is a refinement of \cite[(3.2)]{MMT3}, which showed that if $s>\frac{d}{2}$ and $F$ is smooth with $F(0)=0$, then
\begin{equation}\label{oldMoser}
\|F(u)\|_{l^1X^s} \lesssim \|u\|_{l^1X^s} (1+\|u\|_{l^1 X^s})c(\|u\|_{L^\infty}).
\end{equation}
A trivial improvement of this is obtained in an identical manner if we eliminate
the linear part of $F$, and assume instead that $F(0) = F'(0)=0$:
\begin{equation}\label{oldMoser+}
\|F(u)\|_{l^1X^s} \lesssim \|u\|^2_{l^1X^s} c(\|u\|_{L^\infty}).
\end{equation}

To prove \eqref{moser+}, we write
\[F(u)-F(v) = (v-u) \int_0^1 F'((1-t)u+tv)\,dt,\]
so that \eqref{u_squared+} gives
\[\|F(u)-F(v)\|_{\ell^1X^\sigma} \lesssim \|u-v\|_{\ell^1 X^\sigma}
\sup_t \|F'((1-t)u+tv)\|_{\ell^1 X^s}.\]
Using \eqref{oldMoser+}, this is in turn bounded by
\[\|u-v\|_{\ell^1 X^\sigma} \Bigl(\|u\|_{\ell^1 X^s} + \|v\|_{\ell^1
  X^s}\Bigr)^2 c(\|u\|_{L^\infty}, \|v\|_{L^\infty})\]
  as desired.
  
  For \eqref{moser}, this follows directly from \eqref{moser+} with $v=0$ when $d-s \leq \sigma \leq s$.  However, we require this estimate for all $d-s \leq \sigma$; this will be critical later on  in order to achieve higher regularity bounds
  for our nonlinear evolution.  In order to prove \eqref{moser} for $\sigma > s$, we must revisit the proof of \eqref{oldMoser} from Proposition $3.1$ in \cite{MMT3}.  
  We use the expansion
  \[
  S_k F(u) = S_k F(u_0) + \int_0^\infty S_k ( u_{j} F' (u_{< j})) d j.
  \]
  Here we have temporarily replaced the discrete Littlewood-Paley composition by a continuous one with $\text{Id} = S_0+\int_0^\infty S_j\,dj$.

  For the first term we can simply use \eqref{moser} in the already studied case
  $s=\sigma$ since the norms are equivalent on $u_0$.  Indeed, we have
  \[\|S_k F(u_0)\|_{l^1 X^\sigma}\lesssim 2^{k(\sigma-s)} \|S_k F(u_0)\|_{l^1 X^s} \lesssim 2^{k(\sigma-s)} 2^{-Nk} \|S_k \partial^N (F(u_0))\|_{l^1X^s}.\]
  Upon choosing $N\ge \sigma-s$ and computing the derivative, the bound for $S_kF(u_0)$  follows immediately from \eqref{u_squared+} (with $\sigma = s$) except for the terms 
  \[\|S_k((\partial^{N}u_0) F'(u_0))\|_{l^1X^s} + \|S_k(
  (\partial^{\le N-1} u_0^2) F''(u_0))\|_{l^1 X^s}.\]
  The bounds for these terms instead follow from \eqref{u_squared+} followed by an application of \eqref{oldMoser+} and \eqref{oldMoser} respectively.

  For the integrand, as an intermediate step 
  we need to estimate the expression $f_j =F' (u_{< j}) $. A direct application 
  of \eqref{oldMoser+} yields the bound
  \begin{equation}\label{fjMoser}
  \| f_j\|_{\lX^s} \lesssim \|u\|_{\lX^s}^2 c(\|u\|_{L^\infty}).
  \end{equation}
  Differentiating any number of times and then applying \eqref{oldMoser+} yields the better high frequency bound
\begin{equation}\label{fjMoser+}
\|f_j\|_{\lX^{s+N}} \lesssim \|\partial^N f_j\|_{\lX^s} \lesssim 2^{Nj} \|u\|_{\lX^s}^2 c(\|u\|_{L^\infty}), \qquad N \geq 0,
\end{equation}
where we have argued as in the $F(u_0)$ case above to obtain the last inequality.

Then the desired estimate \eqref{moser+} is obtained using the Littlewood-Paley 
trichotomy as follows:

\smallskip

a) For high-low interactions we must have $|j-k| \leq 4$, and we bound
\[
\| S_k(u_j S_{<j-4} f_j)\|_{\lX^\sigma} \lesssim \| u_j\|_{\lX^\sigma} \|f_j\|_{L^\infty}.
\]
A subsequent application of Bernstein's inequality and \eqref{fjMoser} yields the desired estimate.

\smallskip

b) For high-high interactions we must have $j > k-4$, and we use \eqref{XX-hh}
to bound
\[
\| S_k( u_j S_{[j-4,j+4]} f_j)\|_{\lX^\sigma} \lesssim 
\ 2^{(\sigma+s-d) (k-j)} 2^{(\frac{d}{2}-s)k} \| u_j\|_{\lX^\sigma} \|f_j\|_{\lX^s}.
\]
The rapid decay with respect to both $k$ and $j$ provides the summability, and an application of \eqref{fjMoser} completes the estimate. 

c) For low-high interactions we must have $j < k-4$, and we use \eqref{XX-hl}
to bound
\[
\|  u_j P_k f_j \|_{\lX^\sigma} \lesssim 
 \ 2^{(\sigma- s-N) (k-j)} 2^{(\frac{d}{2}-s)j} \| u_j\|_{\lX^\sigma} 2^{-Nj} 
\|P_k f_j\|_{\lX^{s+N}}.
\]
For $N>\sigma-s$, we obtain sufficient decay with respect to both $j$ and $k$ to obtain summability so that \eqref{fjMoser+} completes the estimate.

This concludes the  proof of the proposition.
\end{proof}

We next turn our attention to bilinear estimates from $X\times X$ into $Y$-type spaces. We first recall the two main dyadic estimates from there:
\begin{lemma}
The following bilinear estimates hold in $l^1_j Y_j$ spaces:

a) High-low interactions $j < k-4$:
\begin{equation}\label{XXY-hl}
\| S_j u S_k v\|_{l^1_k Y_k} \lesssim 2^{\frac{jd}{2}} 2^{j-k} \|S_j u \|_{l^1_j X_j} 
\|S_k v\|_{l^1_k X_k}, 
\end{equation}

b) Balanced interactions, $|j-k| \leq 4$:
\begin{equation}\label{XXY-hh}
\| S_i(S_j u S_k v)\|_{l^1_i Y_i} \lesssim  2^{\frac{kd}{2}} \|S_j u \|_{l^1_j X_j} 
\| S_k v\|_{l^1_k X_k} .
\end{equation}

\end{lemma}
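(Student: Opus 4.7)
The plan is to prove both dyadic bilinear estimates by combining the two-branch decomposition $Y_k = 2^{k/2} Y + L^1 L^2$ with the cube decompositions underlying the $l^1_k$ norms, following the template of the corresponding $l^1_k X_k$ bounds \eqref{XX-hl} and \eqref{XX-hh} from \cite{MMT3}. The key ingredients are Bernstein's inequality at a fixed dyadic scale, the embedding of the $L^\infty L^2$ component of $X_j$ into $L^\infty L^2$, and the duality characterization of $Y$ as the predual of the cube-$L^2$ norm defining $X$. In both cases, I will choose which branch of $Y_k$ to use depending on which factor carries the high frequency and on how well localized in space the product is.

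For part (a), with $j < k-4$, the first step is to place the low frequency factor in $L^\infty_{t,x}$ via Bernstein, yielding
\[
\| S_j u \|_{L^\infty L^\infty} \lesssim 2^{jd/2} \| S_j u\|_{l^1_j X_j},
\]
where I use the $L^\infty L^2$ piece of $X_j$ to absorb the spatial $l^1_j$ summation. The output is then placed in the $2^{k/2}Y$ branch of $Y_k$: testing against $\varphi \in X$ of unit norm and integrating by parts once moves a derivative from the high frequency factor $S_k v$ (costing $2^k$) onto $\chi_Q S_j u$ (gaining only $2^j$), producing the announced factor $2^{j-k}$ after accounting for the $2^{k/2}$ renormalization of $Y_k$. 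On each output cube $Q \in \Q_k$, the remaining bilinear pairing is controlled using $L^\infty_{t,x}$ on $S_j u$, the $L^\infty L^2$ component of $X_k$ on $S_k v$, and the cube-$L^2$ bound on $\varphi$; summing over $Q$ then closes \eqref{XXY-hl} by the $l^1_k$ hypothesis on $S_k v$.

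For part (b), with $|j-k| \leq 4$ and output at scale $2^i$, the product is placed directly in the $L^1 L^2$ branch of $Y_i$. On each cube $Q \in \Q_i$, a pointwise bound of the form
\[
\| S_i (\chi_Q S_j u \, S_k v)\|_{L^1 L^2} \lesssim \| S_j u\|_{L^\infty L^\infty} \, \|\chi_{\tilde Q} S_k v\|_{L^1 L^2}
\]
combined with Bernstein at scale $2^k$ applied to $S_j u$ yields the factor $2^{kd/2}$. The $l^1_i$ summation over $Q \in \Q_i$ is then organized by refining the finer cube decompositions $\Q_j, \Q_k$ over $\Q_i$, using that $i \leq k \sim j$; the $l^1_j X_j$ and $l^1_k X_k$ hypotheses handle the resulting sum.

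The main obstacle in both estimates is the careful bookkeeping of cube decompositions at three different dyadic scales (input frequencies $j,k$ and output frequency/spatial scale) while tracking the Bernstein gains and the interplay between the $L^1 L^2$ and $2^{k/2} Y$ branches of $Y_k$. As in \cite{MMT3}, this reduces cleanly to the pointwise atomic characterization of $Y$ and the $L^2$-on-cubes definition of $X$, and no new technical ideas beyond those already developed there are required.
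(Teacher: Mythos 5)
The paper does not prove this lemma: it is recalled verbatim from \cite{MMT3}, with the remark that these dyadic bounds are the building blocks for \cite[Proposition 3.1(b)]{MMT3}. There is therefore no in-paper proof to compare against, and I will instead assess your sketch on its own merits.

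For part (b) your outline is essentially correct: placing the output in the $L^1 L^2$ branch of $Y_i$, using Bernstein at the comparable scales $j\sim k$ to put the low-frequency factor in $L^\infty_{t,x}$ at the cost of $2^{kd/2}$, invoking H\"older in time (implicitly using $T\le 1$), and organizing the spatial summation over nested cubes. This is the same mechanism by which the refined bound \eqref{XXY-hh+} (with an explicit $T$ gain) is established later in the paper, from which \eqref{XXY-hh} on $[0,T]$ with $T\le 1$ follows.

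For part (a), however, there is a genuine gap. The claim that one can gain $2^{j-k}$ by "integrating by parts once, moving a derivative from $S_k v$ onto $\chi_Q S_j u$" does not work. If one writes $S_k v = 2^{-k}\nabla W_k$ (a Fourier multiplier manipulation) and integrates by parts in $\int \chi_Q\, S_j u\, S_k v\, \bar\varphi$, the derivative does not land only on $\chi_Q S_j u$: it also lands on $\bar\varphi$, and the test function $\varphi \in X$ is purely $L^2$-type, carrying no differentiability, so $\nabla\varphi$ is uncontrolled. More fundamentally, $2^{j-k}$ in this estimate is not a derivative gain; it is a \emph{local smoothing} gain, and it is produced by matching cube scales in the atomic characterization of $Y$. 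The correct route is: cover the output cube $Q\in\Q_k$ by cubes $Q'\in\Q_j$ at the \emph{low} frequency scale; the $Y$-atom bound at scale $2^j$ gives $\|\chi_{Q'}(S_j u\, S_k v)\|_{Y}\lesssim 2^{j/2}\|\chi_{Q'}(S_j u\, S_k v)\|_{L^2_{t,x}}$, and the $X$-norm of $S_k v$ tested on $Q'$ gives $\|\chi_{Q'} S_k v\|_{L^2_{t,x}}\lesssim 2^{j/2}\|S_k v\|_X \leq 2^{j/2} 2^{-k/2}\|S_k v\|_{X_k}$. Each step earns a $2^{(j-k)/2}$ relative to working at scale $2^k$, and combined with the $2^{-k/2}$ normalization of $Y_k$ this produces exactly $2^{j-k}$. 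The Bernstein factor $2^{jd/2}$ then comes, as you say, from the $L^\infty_{t,x}$ bound \eqref{X-infty} on $S_j u$ and summing into the $\ell^1_j$ structure. No integration by parts appears anywhere in this argument.
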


These estimates are the main building blocks for the proof of \cite[Proposition 3.1 (b)]{MMT3}. The results there are no longer sufficient in the present paper, 
where, in order to deal with the large data problem, we also need bilinear bounds where a gain is obtained when $T\ll 1$. Our result is as follows:

\begin{prop}\label{p-XXY+}
 Let $s >\frac{d}2+2$ and $T \leq 1$. Then for any $\delta \ge 0$ sufficiently small,
the following bilinear bounds hold
on the time interval $[0,T]$: 
 \begin{equation}\label{xxy2+}
\| u v\|_{\lY^{\sigma-\delta}} \lesssim  T^\delta \|u\|_{\lX^{\sigma-1}} \|v\|_{\lX^{s-1}}, \qquad 
\ \ 0 \leq \sigma \leq s,
  \end{equation}
 \begin{equation}\label{xxy1+}
\| u v\|_{\lY^{\sigma-\delta}} \lesssim  T^\delta  \|u\|_{\lX^{\sigma}} \|v\|_{\lX^{s-2}}, \qquad 
\ \ 0 \leq \sigma \leq s-1.
  \end{equation}
\end{prop}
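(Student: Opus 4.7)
The plan is to obtain these two bilinear estimates by interpolating between two endpoint bounds: the $\delta = 0$ case, which is essentially the content of \cite[Prop.~3.1(b)]{MMT3} (no $T^\delta$ gain), and a cruder $\delta = 1$ endpoint that loses a full derivative but picks up a factor of $T$ from H\"older's inequality in time. A small amount of regularity loss $\delta$ then produces the desired $T^\delta$ gain via interpolation.

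For the $\delta = 0$ endpoint I would redo the Littlewood--Paley trichotomy for the product $uv$ using the dyadic bilinear bounds \eqref{XXY-hl}--\eqref{XXY-hh}. In the high-low branch $j < k - 4$, estimate \eqref{XXY-hl} contributes $2^{jd/2} 2^{j-k}$; after assigning $s - 1 > d/2 + 1$ derivatives to the low-frequency factor and $\sigma - 1$ derivatives to the high-frequency factor, the factor $2^{jd/2}$ is more than absorbed and the $2^{j-k}$ provides summation over $j$. Balanced interactions $|j - k| \leq 4$ use \eqref{XXY-hh} with factor $2^{kd/2}$, absorbed similarly, and low-high interactions are symmetric to the high-low case. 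The threshold $s > d/2 + 2$ is exactly what is needed. The same scheme with derivatives redistributed as $(\sigma, s-2)$ rather than $(\sigma - 1, s-1)$ delivers the $\delta = 0$ analogue of \eqref{xxy1+}.

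For the $T$-gained endpoint I would exploit the embedding $L^1_t L^2_x \subset Y_j$ together with H\"older in time $\|f\|_{L^1_t L^2_x} \leq T \|f\|_{L^\infty_t L^2_x}$. In the high-low case, one writes
\[
\|\chi_Q S_j u \cdot S_k v\|_{L^1 L^2} \lesssim T \|S_j u\|_{L^\infty L^\infty} \|\chi_Q S_k v\|_{L^\infty L^2} \lesssim T \cdot 2^{jd/2} \|S_j u\|_{l^1_j X_j} \|\chi_Q S_k v\|_{X_k},
\]
using Bernstein on the low factor and the embedding $L^\infty_t L^2_x \hookrightarrow X_k$ on the high factor; summing over $Q \in \mathcal{Q}_k$ yields a $T$-gained analogue of \eqref{XXY-hl}, but now with one fewer derivative available on the high factor since the $2^{j-k}$ decay from \eqref{XXY-hl} has been sacrificed. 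The balanced and low-high cases are handled analogously, using Bernstein on the output rather than on a single factor in the balanced case.

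Finally, interpolating the two endpoints with weight $1 - \delta$ and $\delta$ produces the $T^\delta$ factor at the cost of losing $\delta$ derivatives on the output, which is exactly the content of \eqref{xxy2+} and \eqref{xxy1+}. The main obstacle I anticipate is bookkeeping the $l^1$ cube summations through the interpolation step: one must verify that the two endpoint estimates are compatible at the level of cube-localized bounds so that real interpolation between $l^1_j Y_j$ at two derivative levels lands on the expected intermediate space. A secondary technical point is confirming that the derivative count remains balanced at the extreme cases $\sigma = s$ in \eqref{xxy2+} and $\sigma = s - 1$ in \eqref{xxy1+}, where no extra margin is available and the small loss $\delta$ must be taken strictly positive.
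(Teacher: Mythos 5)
Your proposal follows essentially the same route as the paper's proof: prove a $\delta=0$ dyadic estimate (which is just \eqref{XXY-hl}--\eqref{XXY-hh}), prove a $\delta=1$ dyadic estimate by trading the $2^{j-k}$ decay for a factor $T$ via H\"older in time, interpolate, and sum. Your computation for the $\delta=1$ endpoint of the high-low case is exactly what the paper does.

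The one place where you are worrying about a phantom difficulty: you anticipate that ``real interpolation between $l^1_j Y_j$ at two derivative levels'' could be delicate, because you are thinking of interpolating the two full proposition-level estimates. The paper sidesteps this entirely by interpolating \emph{at the level of a single fixed dyadic pair $(j,k)$}. At that level the two endpoints read
\[
\| S_j u\, S_k v\|_{l^1_k Y_k} \lesssim 2^{jd/2}\,2^{j-k}\, \|S_j u\|_{l^1_j X_j}\,\|S_k v\|_{l^1_k X_k}
\quad\text{and}\quad
\| S_j u\, S_k v\|_{l^1_k Y_k} \lesssim T\, 2^{jd/2}\, \|S_j u\|_{l^1_j X_j}\,\|S_k v\|_{l^1_k X_k},
\]
and ``interpolation'' is nothing more than $\min(a,b)\le a^{1-\delta}b^\delta$ applied to the two constants. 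No Banach-space interpolation of $l^1$-summed spaces is needed; one simply sums the dyadic pieces afterward as in the trichotomy. Once you rephrase your interpolation step this way, the obstacle you flagged disappears. One further small observation the paper makes, which your scheme misses, is that the balanced interaction \eqref{XXY-hh} already admits the \emph{full} factor $T$ rather than only $T^\delta$ (by two applications of H\"older and Bernstein as in \eqref{XXY-hh+}), so no interpolation --- and hence no $\delta$-derivative loss --- is needed there; this is not essential for the conclusion but is cleaner and avoids having to check the edge cases $\sigma=s$ and $\sigma=s-1$ in the balanced branch.
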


We remark that setting $\delta = 0$ one recovers \cite[Proposition 3.1 (b)]{MMT3}.

\begin{proof}[Proof of Proposition \ref{p-XXY+}]
The main step of the proof is to establish the extensions of the bounds 
\eqref{XXY-hl} and \eqref{XXY-hh} where we gain a power of $T$. In the case
of \eqref{XXY-hl} we will trade the balance of derivatives for the $T^\delta$
gain and will prove:  
\begin{equation}\label{XXY-hl+}
\| S_j u S_k v\|_{l^1_k Y_k} \lesssim T^\delta 2^{\frac{jd}{2}} 2^{(1-\delta)(j-k)} \|S_j u \|_{l^1_j X_j} \| S_k v\|_{l^1_k X_k}, \qquad j < k-4, \ \ \delta \in [0,1].
\end{equation}
On the other hand, as it turns out, the bound \eqref{XXY-hh} already has the 
$T$ gain built in, so we will prove that
\begin{equation}\label{XXY-hh+}
\| S_i(S_j u S_k v)\|_{l^1_i Y_i} \lesssim T 2^{\frac{kd}{2}} \|S_j u \|_{l^1_j X_j} 
\|S_k v\|_{l^1_k X_k}.
\end{equation}
Once these dyadic bounds are proved, the conclusion of the proposition 
easily follows after applying the Littlewood-Paley trichotomy 
and appropriate dyadic summation.

We first prove \eqref{XXY-hl+}. Due to \eqref{XXY-hl}, we already know it for $\delta = 0$, so by interpolation it suffices to prove it when $\delta = 1$. Using H\"older's inequality 
in time we have
\[
\| S_j u S_k v\|_{l^1_k Y_k} \lesssim \| S_j u S_k v\|_{l^1_k L^1 L^2}
\lesssim \| S_j u \|_{L^\infty L^\infty} T \| S_k v\|_{l^1_k L^\infty L^2}
\lesssim T 2^\frac{jd}2 \| S_j u \|_{l^1_j X_j}  \| S_k v\|_{l^1_k X_k}
\]
as needed.

The bound \eqref{XXY-hh+} follows by a similar repeated application of 
H\"older's inequality and an application of Bernstein's inequality:
\[
\begin{split}
\| S_i(S_j u S_k v)\|_{l^1_i Y_i} \lesssim & \  \| S_i(S_j u S_k v)\|_{l^1_i L^1 L^2}
\lesssim T^\frac12 \| S_i(S_j u S_k v)\|_{l^1_i L^2 L^2}
\\
\lesssim & \ T^\frac12 2^{\frac{d}2(k-i)} \| S_i(S_j u S_k v)\|_{l^1_k L^2 L^2}
\lesssim T 2^{\frac{d}2(k-i)} \| S_i(S_j u S_k v)\|_{l^1_k L^\infty L^2}
\\
\lesssim & \ T 2^{\frac{d}2 k} \| S_j u S_k v\|_{l^1_k L^\infty L^1} \lesssim T 2^{\frac{d}2 k} 
\|S_j u \|_{l^1_j X_j} 
\|S_k v\|_{l^1_k X_k} .
\end{split}
\]
\end{proof}

\subsection{ The paradifferential source term}
\label{sec:G} \
Our goal here is to obtain estimates for the paradifferential remainder term $G(u)$.  We recall here that
\begin{equation*}
G  (u, \bar u , \nabla u , \nabla \bar{u}) = F(u, \bar u , \nabla u , \nabla \bar{u}) - \partial_j (g^{jk}  - T^w_{g^{jk}}) \partial_k u + T^w_{b_j} \partial_j u + T^w_{\tilde b_j} \partial_j \bar u.
\end{equation*}

\begin{prop}\label{p:G}
Assume that $s_0 >\frac{d}2 + 2$. Then the nonlinearity $G$ satisfies uniform bounds,
\begin{equation}\label{G-hi-sigma}
\| G(u) \|_{\lY^{\sigma}} \lesssim T^{\delta} C(\|u\|_{\lX^{s_0}}) \| u\|_{\lX^\sigma}, \quad  0 \leq \sigma.
\end{equation}
as well as Lipschitz bounds,
\begin{equation}\label{G-lo-sigma}
\| G(u_1) - G(u_2) \|_{\lY^{\sigma}} \lesssim T^{\delta} C(\|u_{1,2}\|_{\lX^{s_0}}) \| u_1-u_2\|_{\lX^\sigma}
\quad 0 \leq \sigma \leq s_0.
\end{equation}
\end{prop}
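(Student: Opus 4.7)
The plan is to split the nonlinearity as
\[
G = F(u, \bar u, \nabla u, \nabla \bar u) + R + L,
\qquad R := -\partial_j (g^{jk} - T^w_{g^{jk}})\partial_k u,\quad L := T^w_{b^j}\partial_j u + T^w_{\tilde b^j}\partial_j \bar u,
\]
and to treat each of the three pieces by combining the Moser-type bounds of Proposition~\ref{p:XX+} with the bilinear $\lX\cdot\lX\to \lY$ estimates of Proposition~\ref{p-XXY+}. The common feature that will make this work is that each of $F$, $R$, $L$ is, as a function of $u$, at least quadratic in $(u,\nabla u)$ and carries at most two derivatives; this is precisely the structure for which \eqref{xxy1+} and \eqref{xxy2+} supply the gain $T^\delta$.

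For $F$, the assumption $F = O(|y|^2+|z|^2)$ lets me factor $F = A(u,\bar u,\nabla u,\nabla \bar u)\cdot P$ with $A$ a smooth (matrix-valued) function of its arguments and $P$ one of the quadratic monomials $uu$, $u\nabla u$, $\nabla u\nabla u$. I would then control $A$ in $\lX^{s_0-1}$ by Moser together with the algebra property \eqref{u_squared+} (which applies since $s_0-1>\frac{d}{2}+1$), and apply \eqref{xxy1+} or \eqref{xxy2+} to the factor $P$, which would yield \eqref{G-hi-sigma} for this piece.

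For the paradifferential error $R$, which is the main novel contribution, I would set $h^{jk} = g^{jk} - \delta^{jk}$, so that $T^w_{\delta^{jk}} = \delta^{jk}$ gives $R = -\partial_j(h^{jk} - T^w_{h^{jk}})\partial_k u$, with $h^{jk}$ a smooth function of $(u,\bar u)$ vanishing at the origin. The key observation, coming from the Bony decomposition (and with only smoothing corrections from the Weyl symmetrization of the paraproduct), is that the difference $h^{jk}\cdot \partial_k u - T^w_{h^{jk}}\partial_k u$ contains only (i) balanced-frequency interactions $|j_1-j_2|\le 4$ between $h^{jk}$ and $\partial_k u$, and (ii) interactions in which $h^{jk}$ carries the strictly higher frequency. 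Case (i) is bounded using the balanced dyadic estimate \eqref{XXY-hh+}, and case (ii) using \eqref{XXY-hl+} with $h^{jk}$ in the high-frequency slot. In both cases the outer derivative $\partial_j$ contributes at most one power of the peak frequency, which is absorbed by the bilinear gain $2^{(1-\delta)(j_2-j_1)}$; the Moser bound \eqref{moser} then translates norms of $h^{jk}$ into norms of $u$.

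The correction term $L$ would be handled in the same vein: from \eqref{bc-def}, the coefficients $b^j$ and $\tilde b^j$ vanish linearly at the origin, so Moser gives $\|b^j\|_{\lX^{s_0-1}} \lesssim C(\|u\|_{\lX^{s_0}})\|u\|_{\lX^{s_0}}$. I would then write $T^w_{b^j}\partial_j u$ as the full product $b^j\cdot\partial_j u$ minus Bony/Weyl remainders of exactly the type already treated in cases (i) and (ii) above, and bound the product via \eqref{xxy1+}. The Lipschitz estimate \eqref{G-lo-sigma} is proved by the same scheme applied to $F(u_1)-F(u_2)$, $g(u_1)-g(u_2)$, and $b(u_1,\nabla u_1)-b(u_2,\nabla u_2)$, each written as $(u_1-u_2)$ times an integral of smooth functions of $u_1,u_2$ along the segment, using \eqref{u_squared+} and \eqref{moser+} in place of their unsymmetric counterparts; the upper bound $\sigma\le s_0$ reflects the ceiling $\sigma\le s$ in those two estimates. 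The main obstacle I expect is the treatment of $R$: one must confirm that the Weyl corrections (beyond the standard Bony decomposition) do not introduce interactions outside cases (i)--(ii), and verify that the outer derivative $\partial_j$ costs no more than the factor supplied by the bilinear gain.
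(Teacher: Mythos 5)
Your decomposition $G = F + R + L$ sacrifices the paradifferential cancellation that the bound hinges on, and as a result the individual pieces $F$ and $L$ do not satisfy \eqref{G-hi-sigma}. The paper first passes to nondivergence form, $iu_t + g^{jk}\partial_j\partial_k u = \tF$, and then splits $G = G_1 + G_2$ with $G_1 = (g^{jk}-T^w_{g^{jk}})\partial_j\partial_k u$ (note: \emph{no} outer $\partial_j$) and $G_2 = \tF - T^w_{b_j}\partial_j u - T^w_{\tilde b_j}\partial_j\bar u$, where $b_j$ is redefined as $\partial_{\partial_j u}\tF$. That second grouping is the key: the paraproducts $T^w_{b_j}\partial_j u$ are precisely the coefficient-low/$\nabla u$-high frequency interactions hiding inside $\tF$, so the subtraction removes them. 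In everything that survives, a dyadic block with output frequency $k$ has either two balanced high-frequency inputs, or its highest frequency on an \emph{undifferentiated} coefficient factor. Only for those configurations does the trade in \eqref{XXY-hl+} --- a factor $T^\delta$ paid for with a loss $2^{\delta(k-j)}$ --- close with $\|u\|_{\lX^\sigma}$ on the right, since the extra $2^{\delta k}$ can be absorbed against the slack from $s_0>\tfrac d2+2$ on the undifferentiated high factor.

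In your splitting, $F$ alone, $L$ alone, and the full product $b^j\partial_j u$ all still contain the bad coefficient-low/$\nabla u$-high interaction. Estimating any of these with \eqref{xxy1+} or \eqref{xxy2+} (shifting $\sigma-\delta$ back to $\sigma$) yields at best $\|\cdot\|_{\lY^\sigma}\lesssim T^\delta C(M)\|u\|_{\lX^{\sigma+\delta}}$, a $\delta$-loss of Sobolev index incompatible with \eqref{G-hi-sigma} and fatal to the bootstrap in \eqref{ub-hs}. The same defect appears in $R$ itself: expanding the outer $\partial_j$ produces the piece $(\partial_j g^{jk}-T^w_{\partial_j g^{jk}})\partial_k u$, and when the \emph{differentiated} coefficient $\partial_j g^{jk}$ sits at the high frequency $j_1$, its factor $2^{j_1}$ exceeds the bilinear gain $2^{(1-\delta)(j_2-j_1)}$ by $2^{\delta j_1}$. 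Your hope that ``the outer derivative $\partial_j$ ... is absorbed by the bilinear gain'' is correct only at $\delta=0$; it fails at exactly the $\delta>0$ that produces the $T^\delta$. The missing idea is the nondivergence rewrite together with grouping $\tF$ with the $T^w_b\nabla u$ terms, which realizes the cancellation and leaves only favorably structured interactions.
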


\begin{rem}
We note that in addition to the $T^\delta$ gain, here 
one could also obtain a slight gain in regularity for $G$,
which is common in quasilinear problems for the paradifferential
remainders. We do not pursue this here. By the same token,
the Lipschitz bound extends to a slightly larger range for $\sigma$. 
\end{rem}

\begin{rem}
Another refinement of the bounds \eqref{G-hi-sigma}
and \eqref{G-lo-sigma} is in the description of the 
constant $C$. Above, $C$ is allowed to fully depend on 
the $\lX^{s_0}$ norm. One could refine this and restrict
the dependence on the full $\lX^{s_0}$ norm to at most 
quadratic, but with a full dependence on the $L^\infty$ norm,
akin to Proposition~\ref{p:XX+}. Such an improvement is 
implicit in the proof below, but not needed for our results.
\end{rem}

\begin{proof}
Rewriting the equation \eqref{eqn:quasiquad1} in nondivergence form,
\[
i u_t + g^{jk}(u,\bar u) \partial_j \partial_k u = \tF(u,\bar u,\nabla u,\nabla \bar u),
\]
where 
\[
\tF(u,\bar u,\nabla u,\nabla \bar u) = 
F(u,\bar u,\nabla u,\nabla \bar u) - \partial_u g^{jk}(u,\bar u) \partial_j  u \, \partial_k u
- \partial_{\bar u} g^{jk}(u,\bar u) \partial_j  \bar{u} \, \partial_k u,
\]
one can verify by a direct computation that 
we can split $G = G_1+ G_2$ where
\[
G_1 =  (g^{jk}  - T^w_{g^{jk}}) \partial_j \partial_k u ,
\]
\[
G_2 =  \tF - T^w_{b_j} \partial_j u - T^w_{\tb_j} \partial_j \bar u,
\]
where by a slight abuse of notation we have redefined
\[
b_j = \partial_{\partial_j u} \tF, \qquad \tb_j = \partial_{\partial_j \bar u} \tF.
\]

We will work with $G_1$ directly in the form above, but $G_2$ needs
some further processing. Precisely, using a continuous Littlewood-Paley truncation indexed by the dyadic parameter $k$, we write $G_2$ in the form
\begin{equation}\label{G2}
G_2 =\tilde F(u, u_{<0}) +  \int_0^\infty  (b(u,\nabla u_{<k}) -  T^w_{b(u,\nabla u)})  \nabla u_k  + (\tilde b(u,\nabla u_{<k}) -  
T^w_{\tilde b(u,\nabla u)})  \nabla \bar u_k \, dk.
\end{equation}
The two integrated terms are similar and can be estimated
separately so we consider the first one.  There we re-expand to rewrite the integrand as
\begin{equation}\label{sec-int}
S_{>k-4} b(u,\nabla u_{<k}) \nabla u_k + \int_{k}^\infty S_{<k-4} ( b(u,\nabla u_{<j})
\nabla u_j ) \nabla u_k dj.
\end{equation}
At this point we divide the argument into two cases, depending on how
$\sigma$ compares with $s_0$.
\bigskip

{\bf Case 1, $\sigma \geq s_0$.} Here we only need to prove \eqref{G-hi-sigma}. For $G_1$ we apply the Moser estimate \eqref{moser} 
to get
\[
\|g(u)-I \|_{\lX^{\sigma}} \lesssim_M \| u\|_{\lX^{\sigma}} 
\]
and then apply Proposition \ref{p-XXY+} (specifically \eqref{XXY-hh+}).

For the first term of $G_2$ in \eqref{G2} we simply apply the Moser estimates \eqref{moser} 
to place it in $\lX^\sigma \subset T \lY^\sigma$.

Next we consider the two terms in \eqref{sec-int}. 
For the expression $b_k:=b(u, \nabla u_{<k})$ we use the Moser estimate \eqref{moser} to bound
\begin{equation}\label{b-est}
\| b_k \|_{\ell^1 X^{\sigma-1+h}} \lesssim 2^{hk} C(\|u\|_{L^\infty})
\| u\|_{l^1 X^{\sigma}}(1 + \|u\|_{l^1 X^{s_0}}), \qquad h \in [0,1],\quad s_0 \leq \sigma. 
\end{equation}
Here the case $h = 0$ follows by applying \eqref{moser} directly to $b(u, \nabla u_{<k})$ , whereas the case $h = 1$ is obtained by applying \eqref{moser} to  its gradient.
In the first term of \eqref{sec-int} we can conclude now via \eqref{xxy2+} (or, more precisely, the dyadic bounds
\eqref{XXY-hl+} and \eqref{XXY-hh+}).

 In the integrand in the second term of \eqref{sec-int} we can freely insert a projector  and rewrite it as
\[
S_{<k-4} ( S_{j} b_j \nabla u_j ) \nabla u_k, 
\]
after which we estimate applying \eqref{XXY-hl+} and \eqref{u_squared+} sequentially
\[
\begin{split}
\| S_{<k-4} ( S_{j} b_j \nabla u_j ) \nabla u_k\|_{\lY^\sigma} 
\lesssim & \ T^\delta 2^{\delta k} \| S_{j} b_j \nabla u_j\|_{\lX^{s_0-1}}   \| u_k\|_{\lX^\sigma}
\\
\lesssim & \ T^\delta 2^{\delta (k-j)} \| b_j\|_{\lX^{s_0-1}}  \|\nabla u_j\|_{\lX^{s_0-1}}   
\| u_k\|_{\lX^\sigma}.
\end{split}
\]
Now we can conclude by \eqref{b-est} with $\sigma = s_0$ and $h = 0$.

\bigskip

{\bf Case 2, $0 \leq \sigma \leq s_0$.} Here we only need to prove \eqref{G-lo-sigma}, as \eqref{G-hi-sigma} is then a straightforward consequence. We denote $v= u_2-u_1$ and use $u$ for either $u_1$ or $u_2$.

For $G_1$ we represent
\[
g(u_1) - g(u_2) = v h(u)
\]
for a smooth function $h$. Then
\[
G_1(u_1) - G_1(u_2) = \sum_k S_{>k-4}( v h(u)) \partial^2 u_k +
 S_{> k-4}(g(u)) \partial^2 v_k.
\]
The terms $h(u)-h(0)$ and $g(u)-g(0)$ are estimated in $\lX^{s_0}$ by the Moser estimate \eqref{moser}. The second term as well as the 
contribution of constants in $h$ are estimated by \eqref{XXY-hl+}, \eqref{XXY-hh+}.
One might want to also use bilinear estimates to fully include $w = h(u)-h(0)$ in $v$, but this is not allowed by the limited range of $\sigma$ in \eqref{u_squared+}. Nevertheless, we can estimate the high-low interactions in $vw$ in this manner. For the remaining terms we need to treat this as a trilinear bound,
\begin{equation}
\| S_j v S_{j} w S_k u\|_{l^1 Y^\sigma} \lesssim T^\delta 2^{- 2j -\delta j}
\| v \|_{\lX^\sigma} \| w \|_{\lX^{s_0}} \| S_k u\|_{\lX^{s_0}},
\qquad j \geq k-5.
\end{equation}
For this we separate into two cases. If $j > k+4$ we use the algebra 
property to absorb $S_k u$ into $w$, and then apply \eqref{xxy2+}.
Else, $|j-k| < 4$ so we can place the product $S_j w S_k u$
into $\lX^{s_0-1}$ with a $2^{-\delta k}$ gain, and then use \eqref{xxy2+}.

We now turn our attention to $G_2$. The contributions arising from
the first term in \eqref{G2} are easily estimated using the Moser estimates in \eqref{moser} and \eqref{xxy2+}. We next consider 
the differences corresponding to the first term in \eqref{sec-int}. Using again the notation $b_k$ for expressions of the form $b(u,\nabla u_{<k})$, the differences will be linear 
combinations of expressions of the form
\[
S_{> k-4}( v b_k) \nabla u_k, \qquad
S_{> k-4}( \nabla v_{<k} b_k) \nabla u_k, \qquad
S_{> k-4} b_k \nabla v_k.
\]
Given the bound \eqref{b-est} for $b_k$, the first term is like in the case of $G_1$ but better. The second term is worst in the case
$\sigma = s_0$, which was already covered in Case 1.
Finally the last term is as in Case 1, and can be handled using \eqref{b-est}
and \eqref{xxy2+}. 

Next we consider the integrand in \eqref{sec-int}. For any expression   $b_j = b(u,\nabla u_{\leq j})$ we use \eqref{b-est}, which allows us 
to reduce the problem to estimating  quadrilinear terms as follows:
\[
S_{<k-4} ( v b_j
\nabla u_j ) \nabla u_k \quad S_{<k-4} ( b_j \nabla v_{<j}
\nabla u_j ) \nabla u_k, \quad S_{<k-4} ( b_j
\nabla v_j ) \nabla u_k, \quad S_{<k-4} ( b_j
\nabla u_j ) \nabla v_k.
\]
 If the frequency of $v$ is $\leq k+4$ then this is no different from Case 1.  In particular, we can fully discard the last term.
Else, the worst case is $\sigma = 0$, which we assume from here on.
For the first two terms we can use the algebra property to bound 
$b_j \nabla u_j$ in $2^{-\delta j} \lX^{s_0-1}$, where the gain is 
only needed to insure the $j$ summation. Then in all three cases we are left with a trilinear bound
\begin{equation} \label{tri}
\| S_{<k-4} (v_j w_j) u_k \|_{\lY^0} \lesssim T^\delta \| v\|_{\lX^{-1}}
\| w\|_{\lX^{s_0-1}} \| u\|_{\lX^{s_0-1}}, \qquad j > k+5.
\end{equation}
We discard the multiplier, use the algebra property for $w_j u_k$
and then apply \eqref{xxy2+}.
\end{proof}

As a corollary of this, we also obtain frequency envelope bounds for $G$:

\begin{cor}\label{c:G}
Assume that $s_0 >\frac{d}2 + 2$. Let $c_k$ be a frequency envelope for $u$ in $\ell^1 X^{s_0}$.
Then, provided $\delta\ge 0$ is sufficiently small,
\begin{equation}
\| S_k G(u) \|_{\lX^{s_0}} \lesssim_M T^\delta c_k.
\end{equation}
\end{cor}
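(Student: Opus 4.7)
The plan is to reprove the bound \eqref{G-hi-sigma} at $\sigma=s_0$ in a dyadic, frequency-localized form, replacing $\|u\|_{\lX^{s_0}}$ on the right-hand side by the envelope value $c_k$ at the output frequency. The slow- and uniform-varying properties of $c_k$ then permit the standard frequency-envelope summation, and since Corollary~\ref{c:G} concerns only the high-frequency localization $S_k G$, one never has to re-examine the $L^\infty$-type dependence hidden inside $C_M$.

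I would carry this out by retracing the proof of Proposition~\ref{p:G} with the splitting $G = G_1 + G_2$. For $G_1 = (g^{jk} - T^w_{g^{jk}})\partial_j\partial_k u$, the paradifferential remainder restricts to frequency-balanced interactions, so $S_k G_1$ receives contributions only from factors at frequencies $\ell \gtrsim k$. Applying the dyadic bound \eqref{XXY-hh+}, the Moser bound \eqref{moser} on $g(u) - I$ at the fixed level $\lX^{s_0}$ (which produces only a $C_M$-factor on the low-frequency coefficient), and the uniform-varying property of $c_\ell$ for $\ell > k$ to sum the tail, one arrives at $\|S_k G_1\|_{\ell^1_k Y_k} \lesssim T\,C_M\, c_k$. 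For $G_2$, one retraces the decomposition \eqref{G2}--\eqref{sec-int}: every coefficient of the form $b(u,\nabla u_{<j})$ is estimated via \eqref{b-est} whose right-hand side depends only on the fixed $\lX^{s_0}$ norm (hence collapses into $C_M$), while the distinguished factor $\nabla u_k$ carries the envelope at its own frequency. The $T^\delta$ gain is transported through each building block, since \eqref{XXY-hl+}, \eqref{XXY-hh+}, \eqref{xxy2+} and \eqref{xxy1+} each come equipped with such a power.

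The main obstacle is the low-high cascade inside the integrand of \eqref{sec-int}, where many low-frequency $u_j$'s ($j<k$) enter through $b_j$ while only one factor carries the output frequency $2^k$. One must verify that the cumulative low-frequency contribution is absorbed into $C_M$ and does not contaminate the envelope at scale $k$; this is precisely the role of \eqref{b-est}, whose right-hand side is controlled by the $\lX^{s_0}$ norm alone. Combined with the decay factor $2^{-\delta(k-j)}$ furnished by \eqref{XXY-hl+}, the $j$-integral converges, and the surviving high-frequency factor contributes exactly $c_k$ by the envelope property. The remaining pieces of $G_2$ (the $\tilde F(u,u_{<0})$ term and the diagonal contribution in \eqref{sec-int}) are handled by an envelope version of the Moser bound \eqref{moser}, which follows from the same Littlewood--Paley trichotomy already invoked in the proof of Proposition~\ref{p:XX+}.
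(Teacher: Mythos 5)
Your proposal takes a genuinely different route from the paper. You propose to re-open the proof of Proposition~\ref{p:G} and track frequency envelopes through every dyadic bilinear block, ultimately invoking (without detailing) an ``envelope version of the Moser estimate'' for the nonlinear coefficients $g(u)$, $b(u,\nabla u)$, etc. The paper instead uses a telescoping trick that treats the already-proven bounds of Proposition~\ref{p:G} as black boxes: it writes
\[
S_k G(u) = S_k G(u_{<0})+\sum_{j = 0}^\infty  S_k\Bigl[G(u_{< j+1}) - G(u_{<j})\Bigr],
\]
estimates the terms with $j\ge k$ by the Lipschitz bound \eqref{G-lo-sigma} at $\sigma=0$ (so the difference $u_j = u_{<j+1}-u_{<j}$ carries the envelope directly), and estimates the terms with $j<k$ by the high-regularity bound \eqref{G-hi-sigma} at $\sigma=N>s_0+\sigma_{\mathrm{env}}$, where the large exponent $N$ converts the mismatch $j<k$ into exponential decay $2^{(N-s_0-\sigma_{\mathrm{env}})(j-k)}$. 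This is shorter and avoids re-examining the inner structure of $G$ entirely.

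Your approach can be made to work, but you are underestimating what the ``envelope Moser bound'' costs. In the high-low block of $G_1 = (g-T^w_g)\partial^2 u$ where $g(u)$ carries the output frequency $\sim 2^k$, one genuinely needs $\|S_k(g(u)-I)\|_{\lX^{s_0}}\lesssim_M c_k$, and the same is needed for the $b$-coefficients in $G_2$; this does not fall out of the Littlewood--Paley trichotomy in Proposition~\ref{p:XX+} without additional work. In fact, the standard way to derive such envelope Moser bounds from the fixed-level Moser and Lipschitz bounds is precisely the telescoping identity the paper uses, applied to $g(u)$ and $b(u,\nabla u)$ in place of $G(u)$. So if you filled in that lemma honestly, you would essentially have re-invented the paper's argument at the level of the coefficients. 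There is no fatal gap, but the paper's choice to telescope at the level of $G$ itself, rather than at the level of its ingredients, is both cleaner and logically prior to the envelope Moser bound you wish to invoke.
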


\begin{proof}
Expand 
\[
S_k G(u) = S_k G(u_{<0})+\sum_{j = 0}^\infty  S_k\Bigl[G(u_{< j+1}) - G(u_{<j})\Bigr].
\]
When $j\ge k$, we estimate the differences in $l^1Y^0$ using \eqref{G-lo-sigma}.  When $j<k$, we instead estimate the terms separately in $l^1 Y^N$ where $N > s_0+\sigma$, with $\sigma$ as in part (4) of the definition of a frequency envelope.
\end{proof}

\subsection{ Estimates for the cubic problem}

In the same way the proof of Theorem~\ref{thm:main1} relies primarily on quadratic estimates covered in the previous subsection, for the proof of Theorem~\ref{thm:main2} we need instead trilinear estimates. The
statements for these trilinear bounds are provided in this section; the proofs are similar and largely omitted.  See \cite{MMT4} for some related background. We note that here we only require $s>\frac{d+3}{2}$, which accounts for the improvement in regularity as compared to the quadratic case. We start with the replacement of Proposition~\ref{p:XX+} for $l^2 X$ type spaces:

\begin{prop}
\label{p:XX+l2}
a)  Let $s > \frac{d}2$. Then the $l^2 X^s$ spaces satisfy the bilinear estimate
  \begin{equation}\label{u_squared+3}
\| u v\|_{l^2 X^\sigma} \lesssim \|u\|_{l^2 X^\sigma} \|v\|_{l^2 X^s}, \qquad 0 \leq \sigma \leq s.
  \end{equation}

b) For all smooth $F$ with $F(0)= F'(0)  = F''(0)=0$ we have the Moser estimate
\begin{equation}\label{moser3}
\| F(u)\|_{l^2 X^\sigma} \lesssim   \|u\|_{l^2 X^\sigma} \|u\|_{l^2 X^s}^2 c(\|u\|_{L^\infty}), \qquad 0 \leq \sigma 
  \end{equation}
as well as the difference  estimates
  \begin{equation}\label{moser+3}
\| F(u) - F(v)\|_{l^2 X^\sigma} \lesssim   \|u-v\|_{l^2 X^\sigma}     (\|u\|_{l^2 X^s}+  \|v\|_{l^2 X^s})^2 c(\|u\|_{L^\infty}, \| v \|_{L^\infty}) \qquad 0 \leq \sigma \leq s.
  \end{equation}

\end{prop}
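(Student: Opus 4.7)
The strategy is to parallel the proof of Proposition~\ref{p:XX+}, replacing $l^1$ summability by $l^2$ throughout. The first task is to establish the $l^2$ counterparts of the dyadic bilinear bounds \eqref{XX-hl} and \eqref{XX-hh}. For the high-low interactions one has
\[
\| S_j u \, S_k v\|_{l^2_k X_k} \lesssim 2^{\frac{jd}{2}} \|S_j u\|_{l^2_j X_j} \|S_k v\|_{l^2_k X_k}, \qquad j < k-4,
\]
which follows by placing $S_j u$ in $L^\infty L^\infty$ via Bernstein, using that $L^\infty L^2 \hookrightarrow X_j$ and that the $l^2$ overlap of cubes of size $2^j$ is bounded. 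For balanced interactions, a combination of Bernstein (gaining $2^{id/2}$ from $L^1 \to L^2$) and Cauchy-Schwarz on the product of $L^\infty L^2$ norms yields
\[
\| S_i(S_j u \, S_k v)\|_{l^2_i X_i} \lesssim 2^{\frac{dk}{2}} 2^{\frac{d}{2}(k-i)} \|S_j u\|_{l^2_j X_j} \|S_k v\|_{l^2_k X_k}, \qquad |j-k|\le 4,\ i \le j+5.
\]
The $X$-part of the norm $X_i = 2^{-i/2} X \cap L^\infty L^2$ is handled analogously, since the $X$ norm only depends on pointwise size.

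With these dyadic bounds in hand, part (a) is obtained by the standard Littlewood-Paley trichotomy. The low-high interactions (with $u$ low and $v$ high) produce a factor $2^{(d/2-s)j}$ summable for $s > \frac{d}{2}$; the symmetric high-low case produces $2^{(d/2-\sigma)i}2^{k(\sigma-s)}$, whose summability at the high end is precisely what forces the restriction $\sigma \le s$. The high-high interactions produce $2^{(s+\sigma-d)(k-i)/\text{power}}$ factors summable for $\sigma \ge 0$, which is the reason this case does not impose the lower bound $\sigma \ge d-s$ that appeared in the $l^1$ version (the extra Bernstein cost is milder in the $l^2$ setting).

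For the Moser bound \eqref{moser3} in the range $0 \le \sigma \le s$, I would write $F(u) - F(0) = u \int_0^1 F'(tu)\,dt$, apply \eqref{u_squared+3}, and iterate once more using $F'(0)=F''(0)=0$ so that the remaining factor $F'(tu)/u$ is again of the form $u\cdot h(u)$, producing the quadratic $\|u\|_{l^2 X^s}^2$ gain with $c(\|u\|_{L^\infty})$ absorbing the remaining smooth dependence. The extension to $\sigma > s$ requires the same refinement used in Proposition~\ref{p:XX+}: expand
\[
S_k F(u) = S_k F(u_{<0}) + \int_0^\infty S_k\bigl(S_j u \cdot F'(u_{<j})\bigr)\,dj,
\]
bound $f_j = F'(u_{<j})$ in $l^2 X^{s+N}$ by $2^{Nj}$ times $\|u\|_{l^2 X^s}^2 c(\|u\|_{L^\infty})$ (obtained by differentiating $N$ times and applying the already-established case of the Moser estimate), and then run the Littlewood-Paley trichotomy on the integrand exactly as in part (c) of the proof of Proposition~\ref{p:XX+}, where the low-high piece uses the $f_j$ bound with $N > \sigma - s$ to get summability in $j, k$.

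Finally, \eqref{moser+3} is immediate from the identity
\[
F(u) - F(v) = (u-v) \int_0^1 F'\bigl((1-t)u + tv\bigr)\,dt,
\]
the bilinear bound \eqref{u_squared+3} applied with the first factor $u-v$ in $l^2 X^\sigma$, and the Moser bound \eqref{moser3} applied to $F'$ in $l^2 X^s$ (noting $F'(0) = F''(0) = 0$ makes $F'$ quadratic at the origin). The main technical obstacle is simply verifying the $l^2$ dyadic bilinear bounds with bounded-overlap care on the product of $l^2$-summed $X_j$ norms; once these are in place, the remaining combinatorics is formally identical to the $l^1$ setting, with the endpoint regularity bookkeeping slightly improved because the high-high Bernstein cost is weaker.
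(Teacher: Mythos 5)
Your outline follows the paper's general strategy (port the $l^1$ argument to $l^2$, with the crucial change being a sharper balanced dyadic bound), but the balanced dyadic bound you wrote down is not the one that makes the proposition work, and in fact contradicts the range of $\sigma$ you subsequently claim.

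You state, for $|j-k|\le 4$ and $i\le j+5$,
\[
\| S_i(S_j u \, S_k v)\|_{l^2_i X_i} \lesssim 2^{\frac{dk}{2}}\, 2^{\frac{d}{2}(k-i)} \|S_j u\|_{l^2_j X_j} \|S_k v\|_{l^2_k X_k}.
\]
The extra factor $2^{\frac{d}{2}(k-i)}$ makes this numerically identical to the $l^1$ bound \eqref{XX-hh}, which reads $2^{dk}2^{-di/2}=2^{dk/2}2^{d(k-i)/2}$. If you feed that into the trichotomy for the high-high case (writing $a_j = 2^{\sigma j}\|S_j u\|_{l^2_j X_j}$, $b_k=2^{sk}\|S_k v\|_{l^2_k X_k}$ and using $j\approx k$), the factor that appears is $2^{(\sigma-\frac{d}{2})i}\,2^{(d-s-\sigma)k}$, and the sum over $k\ge i-4$ converges only when $\sigma > d-s$. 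So the bound you wrote down reproduces the $l^1$ restriction $\sigma\ge d-s$, not the claimed range $\sigma\ge 0$. Your subsequent assertion that the high-high interactions ``do not impose the lower bound $\sigma\ge d-s$'' because ``the extra Bernstein cost is milder in the $l^2$ setting'' is the right intuition, but it is not reflected in the estimate you actually stated.

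The correct balanced bound in the $l^2$ setting, which the paper records as \eqref{XX-hh-l2}, is
\[
\| S_i(S_j u S_k v)\|_{l^2_i X_i} \lesssim 2^{\frac{dk}{2}}  \|u \|_{l^2_j X_j}
\| v\|_{l^2_k X_k},
\]
with no loss in $i$. The $l^2$ cube summation on the output is what removes the $2^{\frac{d}{2}(k-i)}$: the product $S_ju\,S_kv$ is concentrated on cubes of size $2^j\sim 2^k$, and the $l^2$ sum over output cubes of size $2^i\le 2^k$ reconstitutes the $L^2$ norm on the larger cubes without the $2^{d(k-i)/2}$ overcounting that Cauchy--Schwarz forces in $l^1$. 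With \eqref{XX-hh-l2} in hand, the high-high contribution carries the factor $2^{\sigma(i-k)}2^{(\frac{d}{2}-s)k}$, which for $\sigma\ge 0$ and $i\le k+4$ is $\lesssim 2^{(\frac{d}{2}-s)k}$ and sums whenever $s>\frac{d}{2}$, giving the full range $0\le\sigma\le s$ in \eqref{u_squared+3}. Once this is corrected, the remainder of your plan (the low-high and high-low cases, the continuous Littlewood--Paley expansion for $\sigma>s$ in the Moser bound, and the fundamental-theorem-of-calculus reduction of \eqref{moser+3} to \eqref{u_squared+3} and \eqref{moser3}) is sound and matches the approach the paper describes and largely leaves to the reader.
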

Compared to Proposition~\ref{p:XX+}, we note that here 
we have a larger range for $\sigma$. This is due to a corresponding improvement in the balanced bilinear interactions. Precisely, the bound \eqref{XX-hh} is replaced 
by 
\begin{equation}\label{XX-hh-l2}
\| S_i(S_j u S_k v)\|_{l^2_i X_i} \lesssim 2^{\frac{dk}2}  \|u \|_{l^2_j X_j} 
\| v\|_{l^2_k X_k}. 
\end{equation}

The bilinear $\lX \times \lX \to \lY $ bounds 
in Proposition~\ref{p-XXY+} are now replaced by 
trilinear $l^2 X \times l^2 X \times l^2 X \to l^2 Y $ 
type bounds. Again we will also  need the slight improvement where we trade regularity for a slight  gain in time.
\begin{prop}\label{p-XXXY+}
 Let $s >\frac{d+3}2$ and $T \leq 1$. Then for any $\delta \ge 0$ sufficiently small, 
the following trilinear bounds hold
on the time interval $[0,T]$: 
   \begin{equation}\label{xxxy+}
 \| u v w\|_{\ltY^{\sigma-\delta}} \lesssim  T^\delta  \|u\|_{\ltX^{\sigma-1}} \|v\|_{\ltX^{s-1}}   \|w\|_{\ltX^{s-1}}, \qquad 0 \leq \sigma \leq s,
   \end{equation}
 respectively
 \begin{equation}\label{xxxy2+}
\| u v w\|_{\ltY^{\sigma-\delta}} \lesssim  T^\delta \|u\|_{\ltX^{\sigma}} \|v\|_{\ltX^{s-2}}  \|w\|_{\ltX^{s-1}}, \qquad 
\ \ 0 \leq \sigma \leq s-1.
  \end{equation}
 \end{prop}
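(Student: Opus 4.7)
The plan is to follow the template of the proof of Proposition~\ref{p-XXY+}: establish appropriate trilinear dyadic building blocks and then sum them via the Littlewood--Paley trichotomy applied to the three input factors. The $T^\delta$ gain is obtained, as there, by interpolating between the $\delta = 0$ case (which reduces to trilinear estimates already developed in \cite{MMT4}) and the $\delta = 1$ case, which follows from H\"older's inequality in time.

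The first step is to obtain $l^2$-summed analogs of the bilinear dyadic bounds \eqref{XXY-hl+} and \eqref{XXY-hh+}. These are derived by the same chain of Bernstein-in-$x$ and H\"older-in-$t$ inequalities used for the $l^1$ versions, but now starting from the improved balanced-interaction bound \eqref{XX-hh-l2} and the analogous high-low $l^2 X \to l^2 X$ bound. The outcome is, for $|j-k|\le 4$,
\[
\| S_i(S_j u \, S_k v)\|_{l^2_i Y_i} \lesssim T\, 2^{kd/2}\|S_j u\|_{l^2_j X_j}\|S_k v\|_{l^2_k X_k},
\]
and, for $j<k-4$,
\[
\| S_j u\, S_k v\|_{l^2_k Y_k} \lesssim T^\delta\, 2^{jd/2}\, 2^{(1-\delta)(j-k)}\|S_j u\|_{l^2_j X_j}\|S_k v\|_{l^2_k X_k}.
\]

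Next, I would derive the trilinear dyadic bounds by grouping two of the three factors: pair the two lowest-frequency factors first with a bilinear $l^2 X \to l^2 X$ bound from Proposition~\ref{p:XX+l2}, and then combine the resulting product with the third factor using the $l^2 X \to l^2 Y$ dyadic bounds above. Writing the input frequencies as $j_1\le j_2\le j_3$ with output $k$, one must handle: the three-separated case $j_1\ll j_2\ll j_3\sim k$; the one-separated case $j_1\ll j_2\sim j_3$ (with either $k\sim j_3$ or $k\ll j_3$); and the fully balanced case $j_1\sim j_2\sim j_3$. In the first two configurations, pairing the two lowest factors yields a frequency-localized intermediate product, after which the high-low $l^2 Y$ bound supplies the $T^\delta$ gain. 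The balanced configurations consume one of the two time-decay-free building blocks (either \eqref{XX-hh-l2} or the balanced $l^2 Y$ bound), which contributes $T^1 \le T^\delta$ since $T\le 1$.

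The threshold $s>(d+3)/2$ enters at the dyadic summation stage: the two low-frequency parameters each contribute a factor of the form $2^{j(d/2-s+1)}$, summable precisely when $s-1>d/2+1/2$. For \eqref{xxxy+}, where $v,w$ both sit in $\ltX^{s-1}$, this matches exactly, while the loss of one derivative on $u$ is absorbed by the factor $2^{(1-\delta)(j-k)}$ in the high-low bound; for \eqref{xxxy2+}, the worse $\ltX^{s-2}$ norm on $v$ is compensated by the better $\ltX^\sigma$ norm on $u$, leaving the same critical threshold. The main obstacle is the fully balanced case $j_1\sim j_2\sim j_3$ with small output $k$, where both bilinear pairings are high-high interactions: one must use the two balanced bounds back-to-back while investing extra regularity from the $\ltX^{s-1}$ factors to sum in $j_3$. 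Preserving the $T^\delta$ gain through all branches of the trichotomy -- ensuring no sub-case yields a bare $T^0$ -- is the other delicate point, handled by arranging that at least one of the two bilinear building blocks invoked in each case already carries a factor of $T^\delta$.
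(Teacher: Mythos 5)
The paper's own "proof" of this proposition is essentially a one-line instruction: adapt the arguments of Propositions~\ref{p-XXY+} and~\ref{p:G} to the $l^2$ setting, left to the reader. Your plan -- establish $l^2$ dyadic building blocks, pair two of the three factors to reduce to the bilinear machinery, sum over the Littlewood--Paley trichotomy, and obtain the $T^\delta$ by interpolating $\delta=0$ against $\delta=1$ (H\"older in time) -- is exactly the route the paper has in mind, and your case analysis (three-separated, one-separated, fully balanced) and your concern about preserving a nontrivial $T$-power in every branch are both on target.

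Two quantitative points need tightening. First, your claimed $l^2$ balanced $Y$ building block
\[
\| S_i(S_j u \, S_k v)\|_{l^2_i Y_i} \lesssim T\, 2^{kd/2}\|S_j u\|_{l^2_j X_j}\|S_k v\|_{l^2_k X_k}
\]
carries the same $2^{kd/2}$ weight as the $l^1$ bound \eqref{XXY-hh+}, but the $l^2$ version should actually be better. In the chain of estimates proving \eqref{XXY-hh+}, the factor $2^{\frac{d}{2}(k-i)}$ arises from passing between $l^1$-summations at the two spatial scales; for $l^2$ summation that step is free since $l^2_i L^2 L^2 = L^2 = l^2_k L^2 L^2$, so the balanced bound improves to $T\,2^{id/2}$ (matching the gain already recorded in \eqref{XX-hh-l2} versus \eqref{XX-hh}). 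This gain, not the Bernstein/$L^\infty$ embedding, is what lowers the regularity threshold from $\frac{d}{2}+2$ in the quadratic case to $\frac{d+3}{2}$ here; if you use the $2^{kd/2}$ version throughout, the fully balanced branch of the trichotomy will force the stronger hypothesis $s>\frac{d}{2}+2$ and you will not recover the stated range. Second, your account of where the threshold enters is off: a low-frequency factor of the form $2^{j(d/2-s+1)}$ is geometrically summable precisely when $s>\frac{d}{2}+1$, not when $s-1>\frac{d}{2}+\frac12$. The extra half-derivative that pushes the requirement up to $\frac{d+3}{2}$ comes from the balanced interactions tied to the $Y$-norm's half-derivative loss, which is again where the sharpened $l^2$ building block must be used. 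Once these two exponent issues are corrected, the scheme you describe closes.
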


Just as in the quadratic case, the next proposition
is the main tool in the proof of the cubic bound for the paradifferential error term:

\begin{prop}\label{p:Gcubic}
Assume that $s_0 >\frac{d+3}2$. Then the nonlinearity $G$ satisfies uniform bounds,
\begin{equation}
\| G(u) \|_{\ltY^{\sigma}} \lesssim T^{\delta} C(  \|u\|^2_{\ell^2 X^{s_0}})\| u\|_{\ell^1 X^\sigma} , \quad \sigma \geq 0.
\end{equation}
as well as Lipschitz bounds,
\begin{equation}
\| G(u) - G(v) \|_{\ell^2 Y^{\sigma}} \lesssim T^{\delta} C(\|u\|^2_{\ltX^{s_0}}, \|v\|^2_{\ltX^{s_0}}) \| u-v\|_{\ltX^\sigma}
\quad 0 \leq \sigma \leq s_0.  
\end{equation}

\end{prop}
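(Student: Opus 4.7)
The strategy is to mirror the proof of Proposition~\ref{p:G}, replacing the bilinear tools used there by their cubic and trilinear counterparts in Propositions~\ref{p:XX+l2} and \ref{p-XXXY+}.  The plan is to first split $G = G_1 + G_2$ exactly as in the quadratic case, with $G_1 = (g^{jk} - T^w_{g^{jk}}) \partial_j \partial_k u$ capturing the paradifferential error from the metric, and $G_2 = \tilde F - T^w_{b_j} \partial_j u - T^w_{\tilde b_j} \partial_j \bar u$ the error from the lower order terms.  The fundamental change from the quadratic regime is that here, by \eqref{gT}, $g(u) - I$ and the coefficients $b, \tilde b$ are at least quadratic in $u$ near zero, while $\tilde F$ is at least cubic; this extra power of $u$ is precisely what promotes each bilinear estimate to a trilinear one and explains the improved regularity threshold $s_0 > \frac{d+3}{2}$.

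For $G_1$, I would apply the Moser-type estimate \eqref{moser3} to $g(u) - I$ to obtain the bound $\|g(u)-I\|_{\ltX^\sigma} \lesssim \|u\|_{\ltX^\sigma} \|u\|_{\ltX^{s_0}}^2 c(\|u\|_{L^\infty})$ for all $\sigma \geq 0$.  Since the paradifferential remainder $(g^{jk} - T^w_{g^{jk}}) \partial_j \partial_k u$ retains only the high-high frequency interactions between $g(u)-I$ and $\partial^2 u$, the estimate for $G_1$ then follows from the $\ltX$ analogue of the dyadic bound \eqref{XXY-hh+} (which is built into \eqref{xxxy+}) and gains a full power of $T$.

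For $G_2$, I would again use a continuous Littlewood-Paley expansion of the form \eqref{G2} and its refinement \eqref{sec-int}, where now the coefficients $b_k = b(u, \nabla u_{<k})$ are quadratic near zero.  Applying \eqref{moser3} gives the analogue of \eqref{b-est}, namely $\|b_k\|_{\ltX^{\sigma-1+h}} \lesssim 2^{hk} \|u\|_{\ltX^\sigma} \|u\|_{\ltX^{s_0}}^2 c(\|u\|_{L^\infty})$ for $h \in [0,1]$, and combining this with the remaining two derivative factors $\nabla u_j$ and $\nabla u_k$ via the trilinear bounds \eqref{xxxy+} and \eqref{xxxy2+} (or their dyadic building blocks in the low-high and high-high cases) closes the estimate.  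The case split $\sigma \geq s_0$ versus $0 \leq \sigma \leq s_0$ proceeds exactly as in the quadratic proof, using the enlarged range of $\sigma$ in Proposition~\ref{p:XX+l2} to close the endpoint.

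For the Lipschitz bound I would set $v = u_1 - u_2$ and expand each difference, writing for instance $g(u_1) - g(u_2) = v \cdot h(u_1, u_2)$ and decomposing $b(u_1, \nabla u_{1,<k}) - b(u_2, \nabla u_{2,<k})$ into a sum of terms linear in $v$ or $\nabla v$.  Each such term is then estimated as a trilinear or (after extracting the linear factor in $v$) quadrilinear expression whose structure mirrors the quadratic analysis, with the algebra property \eqref{u_squared+3} used to combine factors when their frequencies are comparable.  The main technical obstacle will be bookkeeping: tracking the cubic/quadratic algebraic structure precisely enough that both the $T^\delta$ gain and the quadratic (rather than linear) dependence on $\|u\|_{\ltX^{s_0}}$ are preserved throughout the decomposition.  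Beyond this, the argument is a direct transcription of the proof of Proposition~\ref{p:G} with the trilinear tools in place of bilinear ones.
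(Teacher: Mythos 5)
Your proposal aligns with the paper's intended approach: the authors state explicitly that Propositions~\ref{p-XXXY+} and \ref{p:Gcubic} ``follow directly by modifying the proofs of Propositions~\ref{p-XXY+} and \ref{p:G} in the quadratic case and are left to the reader,'' and your blueprint does precisely that, substituting Propositions~\ref{p:XX+l2} and \ref{p-XXXY+} for the bilinear tools used in Proposition~\ref{p:G}.

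Two precision points are worth flagging, both in your $G_1$ (and analogously $b_k$) estimates. First, \eqref{moser3} requires three-fold vanishing, but under \eqref{gT} the quantities $g(u)-I$ and $b,\tilde b$ vanish only to order two near zero; you should either factor out one power of $u$ explicitly and apply \eqref{u_squared+3} to the smooth remainder, or invoke a quadratic-vanishing Moser bound. Either route produces a single power of $\|u\|_{\ltX^{s_0}}$ rather than the square you wrote (harmless for the final constant $C$, but \eqref{moser3} as stated does not apply here). Second, and more substantively, the quadratic-case route ``Moser $g(u)-I$ into a single $\ltX^\sigma$ quantity, then estimate $(g-T^w_g)\p_j\p_k u$ via a bilinear balanced bound'' would require the stronger threshold $s_0>\frac d2+2$ to sum the output frequency in the $\ell^2$ setting; indeed no $\ell^2$ analogue of the bilinear bounds \eqref{XXY-hl+}--\eqref{XXY-hh+} is supplied in the paper, precisely because they do not close at $s_0>\frac{d+3}2$. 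With the reduced threshold you must instead keep $g(u)-I$ in factored form, e.g.\ $u\cdot h(u)$, so that $(g-T^w_g)\p_j\p_k u$ becomes a genuinely trilinear expression amenable to \eqref{xxxy+}--\eqref{xxxy2+}. Your parenthetical reference to \eqref{xxxy+} suggests you have the right idea in mind; just be careful that the Moser step does not collapse the factored structure you need. The rest---the continuous Littlewood--Paley expansion \eqref{G2} and its refinement \eqref{sec-int}, the analogue of \eqref{b-est}, the case split $\sigma\gtrless s_0$, and the Lipschitz difference argument---transfers exactly as you describe.
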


The proofs of Propositions \ref{p-XXXY+} and \ref{p:Gcubic} follow directly by modifying the proofs of Propositions \ref{p-XXY+} and \ref{p:G}  in the quadratic case and are left to the reader.


\section{Nontrapping metrics}
\label{nontrap}

Here we begin with $u_0 \in \ell^1 H^{s_0}$, $s_0>\frac{d}{2}+2$, and
fix $M$ so that
\begin{equation}
\| u_0 \|_{ \ell^1 H^{s_0}} \leq M.
\end{equation}
Then the associated metric $g(u_0)$ satisfies the Moser type bound
(cf. \eqref{moser})
\begin{equation}
\| g(u_0) - I \|_{ \ell^1 H^{s_0}} \lesssim  M^2
\end{equation}
with an implicit constant depending on the $L^\infty$ norm of $u_0$.  Here $I$ denotes the flat
background metric. In particular, by Sobolev embeddings
we have 
\[
\| g(u_0)\|_{C^2}  \lesssim  1+ M^2.
\]
This guarantees that the  Hamilton flow $(x,\xi) \to (x^t,\xi^t)$ given by
  \begin{equation}\label{geodesic}
( \dot x^t,  \dot \xi^t) = (a_\xi (x^t, \xi^t) , -a_x (x^t, \xi^t)), \ (x^0, \xi^0) = (x,\xi) 
  \end{equation}
  with $a(x,\xi) = g^{ij} (u_{0} (x)) \xi_i \xi_j$ is well-defined.

  In order to guarantee that the qualitative assumption that the
  metric $g(u_0)$ is nontrapping is meaningful we will show that this
  is in effect a condition about the bicharacteristic flow within a 
compact set. Furthermore, in order to prove the local
  well-posedness result with a bound from below for the lifespan of the
  solution we need to turn this assumption into a quantitative
  statement.

 We begin by selecting a ball $B:=B_R$ of radius $R \gg 1$ so that outside
  $B$ we have the smallness condition 
 \begin{equation}\label{R-def}
\| \chi_{>R/2} (g(u_0)-I) \|_{ \ell^1 H^{s_0} (\R^d)} \leq \epsilon
\end{equation} 
with a universal small constant $\epsilon$. Here $\chi_{>R/2}$ is a smooth cutoff which
equals $1$ outside $B_{R/2}$ and zero inside $B_{R/4}$.  We observe that for large frequencies such
a bound holds globally, so only small frequencies contribute to
this. 

 However, $R$ could still be arbitrarily large, independently of
$M$, so we retain it as one of the main parameters in our
problem. Without any restriction in generality we will make the
assumption
\begin{equation}
\log R \gg \log M 
\end{equation}
 with a universal implicit constant. The smallness outside $B$ ensures that no trapping 
can happen there. Precisely, we have the following:

\begin{lemma}\label{l:out}
  Let $B$ be as in \eqref{R-def} where $s_0>\frac{d}{2}+2$ and $\epsilon>0$ is sufficiently small.  Then any geodesic for the 
metric $g=g(u_0)$  that exits the ball $B$ will escape to spatial infinity without reentering $B/2$.
\end{lemma}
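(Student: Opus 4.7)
The plan is to exploit the pointwise smallness of the metric perturbation outside $B_{R/2}$ and run a classical positive commutator / Lyapunov function argument on the Hamilton flow \eqref{geodesic} to force escape.

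First I would translate the assumption \eqref{R-def} into pointwise information. Since $s_0 > \frac{d}{2}+2$, the Sobolev embedding built into the $\ell^1 H^{s_0}$ spaces, together with the Moser estimate \eqref{moser} already used to pass from $\|u_0\|_{\ell^1 H^{s_0}}$ to $\|g(u_0) - I\|_{\ell^1 H^{s_0}}$, yields
\[
\| g(u_0) - I\|_{C^2(\R^d \setminus B_{R/2})} \lesssim \epsilon,
\]
as well as decay of $g-I$ together with its first two derivatives as $|x|\to\infty$ (from the $\ell^1$ summability of cube norms).

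Second, I would use the standard radial multiplier $\phi(x,\xi) = x\cdot\xi$, whose Poisson bracket with the symbol $a(x,\xi) = g^{ij}(u_0(x))\xi_i\xi_j$ is
\[
\{a,\phi\} \;=\; 2\, g^{ij}(u_0)\,\xi_i\xi_j \;-\; x^k\, \partial_k g^{ij}(u_0)\,\xi_i\xi_j .
\]
The first term is bounded below by $c_0|\xi|^2$ by uniform ellipticity. The second term is the source of the main technical difficulty: only $|\nabla g| \lesssim \epsilon$ is automatic, while the commutator requires $|x\cdot\nabla g|$ (not $|x|\,|\nabla g|$) to be controlled. I would handle this by either (a) exploiting the $\ell^1$ summability of $\chi_{>R/2}(g-I)$ in $\ell^1 H^{s_0}$ to upgrade to the weighted bound $\||x|\nabla g\|_{L^\infty(\R^d\setminus B_{R/2})} \lesssim \epsilon$, or, more robustly, (b) replacing $\phi$ by a bounded variant $\phi(x,\xi) = \psi(|x|)\,x\cdot\xi$, where $\psi$ is smooth, nondecreasing, equal to $1$ on $\{|x| \le 2R\}$ and decaying like $1/|x|$ at infinity. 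In either case, for $\epsilon$ small enough one obtains
\[
\{a,\phi\} \;\ge\; c\,|\xi|^2 \quad \text{on}\quad \{|x| > R/2\}.
\]

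Third, I would conclude by contradiction. Suppose a bicharacteristic $(x(t),\xi(t))$ exits $B$ at time $t_1$ and reenters $B/2$ at some first later time $t_2$. On $[t_1,t_2]$ the trajectory stays in $\{|x|>R/2\}$, so $\phi$ is strictly monotone:
\[
\phi(x(t_2),\xi(t_2)) - \phi(x(t_1),\xi(t_1)) \;\ge\; c\,|\xi|^2\,(t_2 - t_1).
\]
Conservation of $a$ together with ellipticity bounds $|\xi|$ from below. The exit and reentry conditions $\frac{d}{dt}|x|^2(t_1)\ge 0$ and $\frac{d}{dt}|x|^2(t_2)\le 0$, combined with $g\approx I$ on the annulus, give $\phi(t_1) \gtrsim -\epsilon R|\xi|$ and $\phi(t_2) \lesssim \epsilon R|\xi|$. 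Meanwhile $|x(t_2)-x(t_1)| \ge R/2$ and $|\dot x| \lesssim |\xi|$ force $t_2 - t_1 \gtrsim R/|\xi|$, so the monotonicity inequality implies
\[
\epsilon R\,|\xi| \;\gtrsim\; \phi(t_2)-\phi(t_1) \;\gtrsim\; R\,|\xi|,
\]
contradicting the smallness of $\epsilon$. The main obstacle in carrying out this plan is the construction of the commutator so that the second term in $\{a,\phi\}$ can be controlled by the ellipticity term uniformly on $\R^d\setminus B_{R/2}$; the cleanest resolution is the localized multiplier in option (b), where the balance between the size of $\nabla g$ and the shape of $\psi$ must be tuned to avoid spurious growth of the error at the transition scale $|x|\sim R$.
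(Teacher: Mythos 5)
Your strategy (escape function $\phi = x\cdot\xi$ and a positive commutator estimate) is the classical route to nontrapping for perturbations with \emph{spatially weighted} decay, but there is a real gap in the step where you need $\{a,\phi\}\gtrsim |\xi|^2$ on $\{|x|>R/2\}$, and neither of your two proposed fixes closes it.

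Option (a) would require the pointwise weighted bound $\||x|\,\nabla g\|_{L^\infty(\R^d\setminus B_{R/2})}\lesssim\epsilon$. This does not follow from $\|\chi_{>R/2}(g-I)\|_{\ell^1 H^{s_0}}\le\epsilon$: the $\ell^1$ summability over cubes bounds \emph{sums} of dyadic cube norms, so a single cube of sidelength $2^j$ at arbitrarily large distance $D$ from the origin may still carry $\|\nabla g\|_{L^\infty}\sim\epsilon$, making $|x|\,|\nabla g|\sim \epsilon D$ unbounded. Option (b) runs into a different problem: with $\psi$ nonincreasing of size $\sim 1/|x|$, the ``good'' part of the commutator degenerates. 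Indeed, for $g\approx I$,
\[
2\xi\cdot\partial_x\bigl(\psi(|x|)\,x\cdot\xi\bigr) = 2\psi(|x|)\,|\xi|^2 + 2\psi'(|x|)\,\frac{(x\cdot\xi)^2}{|x|} \approx \frac{2}{|x|}\,|\xi|^2\sin^2\theta,
\]
which vanishes along radial directions $\theta\in\{0,\pi\}$. So the lower bound $\{a,\phi\}\gtrsim c|\xi|^2$ (or even $\gtrsim c\psi|\xi|^2$) is false precisely near the outgoing rays you are trying to control, and the contradiction in your third step does not materialize. Choosing $\psi\sim|x|^{-\alpha}$ with $\alpha<1$ restores positivity of the main term but then the error $\psi\,x\cdot\nabla g\,\xi\cdot\xi\sim|x|^{1-\alpha}|\nabla g|\,|\xi|^2$ is again uncontrolled, putting you back to option (a).

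The paper takes a genuinely different route that sidesteps weighted pointwise decay altogether. It bootstraps the statement that the bicharacteristic stays $c$-close to the straight line $t\mapsto x+ t\xi$; under that bootstrap hypothesis the trajectory spends only $\mathcal{O}(2^j/|\xi|)$ time in each dyadic cube of sidelength $2^j$, so Bernstein plus the $\ell^1$ cube summability give the \emph{time-integral} bound $\int_0^t(|g-I|+|\nabla g|)(x^s)\,ds\lesssim\epsilon|\xi|^{-1}$, which directly closes the ODE bootstrap for $\xi^t-\xi$ and $x^t-x-t\xi$. That is, the smallness is extracted along the (nearly straight) ray, not pointwise in a weight. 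If you want to salvage a commutator argument you would essentially have to re-prove this near-straightness first, at which point the direct Gr\"onwall/bootstrap comparison to the flat flow is both simpler and gives the quantitative control the rest of the section relies on.
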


\begin{proof}
The result will follow by proving uniform bounds similar to those in \cite{MMT1}.  
We begin with the Hamilton flow for $g=g(u_0)$,
\begin{equation}
\label{hf1_g}
\dot x_i^t = g^{ij}(x^t) \xi_j^t, \quad
 \dot \xi_i^t = -(g^{jk})_i(x^t)  \xi_j^t \xi_k^t,    
\end{equation}
which we will compare with  that driven by the flat flow,
\begin{equation}
\label{hf1_flat}
\dot{\tilde{x}}_i^t =  \tilde{\xi}_i^t, \quad
\dot{\tilde{\xi}}_i^t = 0.   
\end{equation}

We will show that all outward pointing rays are forced to live in a
small angular sector. Precisely, consider a geodesic which exits
$B_R$, i.e. a solution $(x^t,\xi^t)$ to \eqref{hf1_g} starting say at $t =
0$ at the point $(x,\xi)$ so that $x \in \partial B_R$ and $\dot
x(0) \cdot x\geq 0$.  This will be compared to the corresponding
trajectory $(x + \xi t, \xi)$ for the flat flow \eqref{hf1_flat}.
We will show that the two trajectories stay close to each other,
\begin{equation}\label{flow-comp}
\| \xi^t - \xi \|_{L^\infty} < c|\xi|, \qquad \| x^t - x - t \xi \|_{L^\infty} < c t |\xi|
\end{equation}
 for $ \epsilon \ll c \ll 1$ sufficiently small. 

At the starting point  we must have 
\[
|\dot x^0 - \xi| \lesssim \epsilon |\xi|.
\]
By the continuity of $\dot x^s$, for a sufficiently small $t$, we have
\[|\dot x^s - \xi|< c |\xi|,\quad s\in [0,t].\]
It, thus, follows that \eqref{flow-comp} holds for sufficiently small
$t\ge 0$.  We then use a bootstrap argument to show that
\eqref{flow-comp} holds globally.  We assume the bounds
\eqref{flow-comp} for $t\in [0,T]$ and shall show that the same hold with, say, $c$
replaced by $c/2$.  We note two 
immediate consequences of our bootstrap assumptions.  On the time
scale $[0,T]$ for which \eqref{flow-comp} is assumed:
\begin{enumerate}[label=(\roman*)]
\item The bicharacteristic $(x^t,\xi^t)$ cannot re-enter $B/2$. 
\item The bicharacteristic $(x^t,\xi^t)$ is nearly straight, 
$|\dot x^t -  \xi | \lesssim c |\xi|$.
\end{enumerate}

It remains to complete the bootstrap.  By property (i), we will freely assume that
\[
\| g - I \|_{\ell^1 H^{s_0}} \le \epsilon
\]
in a unit neighbourhood of the bicharacteristic $(x^t,\xi^t)$, $t\in [0,T]$. By property 
(ii), the bicharacteristic remains in a cube of sidelength $2^j$ for
an $\O(2^j/|\xi|)$ amount of time.  Thus, Bernstein's inequality
yields the uniform bound
\begin{equation}\label{almost-flat}
\int_{0}^t |(g-I)(x^s)| + |\nabla_x g(x^s)| ds \lesssim 
 |\xi|^{-1} \| \chi^{ext}_{B} (g - I) \|_{\ell^1 H^{s_0}} \lesssim \epsilon |\xi|^{-1} 
\end{equation}
provided $t\in [0,T]$.

We first close the bootstrap for $\xi^t - \xi$. We have
\[
\frac{d}{dt} (\xi^t - \xi)_i = \partial_i (g^{kl})  \xi_k^t \xi_l^t.
\]
By our bootstrap assumption we have $|\xi^t| \approx |\xi|$. Hence integrating 
in the last relation and using \eqref{almost-flat} we obtain 
\[
|\xi^t - \xi| \lesssim 
\int_0^t |\partial_i (g^{kl})  \xi_k^s \xi_l^s | ds \le C \epsilon
|\xi|,\quad t\in [0,T],
\]
which suffices provided $\epsilon < \frac{c}{2C}$. 

Next we consider  the difference $x^t - x - t \xi $, for which we have
\[
\frac{d}{dt} (x^t - x - t \xi)_i = g^{ij} \xi_j^t - \xi_i.
\]
Hence,  using that $\|\chi_{>R/2}(g-I)\|_{ L^\infty} \lesssim \epsilon$, we have 
\[
| x^t - x - t \xi| \lesssim \int_0^t |g - I| |\xi^\tau|  +  | \xi - \xi^\tau| \, d \tau 
\lesssim \epsilon t |\xi|
\]
which gives the desired result with $\epsilon$ sufficiently small as
above. 
\end{proof}

Lemma \ref{l:out} ensures that any trapping must be confined to a
compact set, namely the ball $B$.  The nontrapping assumption
guarantees that all geodesics exit $B$ at both ends. We now seek
to quantify that.

In order  to measure the length of a geodesic within the ball $B$
we remark that for the geodesic flow \eqref{geodesic} we have the scaling symmetry
\[
\xi \to \lambda \xi, \qquad t \to \lambda t,
\]
where $t$ is used to denote the parameter in \eqref{geodesic} along geodesics. This one dimensional degree
of freedom needs to be removed in order to uniquely define 
the length of geodesics. Our strategy will be to project the 
geodesic flow on the cosphere bundle $\{ |\xi| = 1\}$, i.e. 
replace \eqref{geodesic} with 
 \begin{equation}\label{geodesic1}
( \dot x^t,  \dot \xi^t) = (a_\xi (x^t, \xi^t) , -a_x (x^t, \xi^t) +  (a_x (x^t, \xi^t) \cdot \xi^t) \xi^t  ), \quad |\xi^t| = 1.
  \end{equation}
Then we define the length of a geodesic $\gamma$ between two points as 
\[
\ell(\gamma) = \int_{t_0}^{t_1} |\dot x^t | dt.
\]
We remark that for a single metric $g$ it would be more natural 
to measure its length using the $g$ metric. However, here we also need to allow for changes in the metric, so it is better to have a common reference frame.

Now we are ready to define our last parameter $L$ which measures
the maximum length of a geodesic within $B$. 
Indeed, a straightforward compactness argument applied to the projection of the flow to the cosphere bundle shows that the quantity
\begin{equation}\label{Ldef}
L = \sup \{ \ell(\gamma \cap 2 B_R); \ \ \text{$\gamma$ geodesic for $g(u_0)$} \}
\end{equation}
is finite, where $\ell (\gamma)$ stands for the Euclidean length
as in \eqref{geodesic1}.  We  have an obvious lower bound  $L \gtrsim R$, but no upper bound for $L$ 
in terms of $R$. We will use $L$ as the second parameter in the quantitative  description of nontrapping.

Our next task is to see that our nontrapping assumption is stable with respect to a class of small perturbations:

\begin{prop}
\label{p:nontrap2}
  Assume that $u_0 \in \ell^1 H^{s_0}$ is so that $g(u_0)$ is
  nontrapping. Let $M,R,L$ be as above.
Then, there exists $C_0 (M) > 0$ such that for $w \in l^1 X^{s_0}$  satisfying
\begin{equation}
\|w\|_{\ell ^1 X^{s_0}} \leq e^{-C_0 (M) L}
\end{equation} 
the metrics $g(u_0+w) $  are uniformly nontrapping, with comparable parameters 
$R,L$.
\end{prop}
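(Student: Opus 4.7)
\textbf{Proof proposal for Proposition \ref{p:nontrap2}.}

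The plan is to compare, at each fixed time slice, the cosphere-bundle flow \eqref{geodesic1} for $\tilde g := g(u_0 + w)$ against that for $g := g(u_0)$, exploiting the fact that unperturbed $g$-geodesics spend arc-length at most $L$ inside $2B$, together with a Gronwall estimate on the Hamilton flow. The exponentially small hypothesis on $w$ is tuned precisely to defeat the exponential Gronwall constant produced by integrating over arc-length $L$.

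I would first convert the norm hypothesis into pointwise derivative control on the metric perturbation. Since $s_0 > d/2 + 2$, Bernstein-type bounds like \eqref{X-infty} combined with the Moser-type difference estimate \eqref{moser+} applied to $\tilde g - g$ and its first two derivatives (using the smoothness of the map $u \mapsto g(u)$ together with \eqref{u_squared+}) yield, at every time,
\[
\| \tilde g - g \|_{C^2} \lesssim_M \|w\|_{\ell^1 X^{s_0}} \leq e^{-C_0(M) L}.
\]
In particular $\tilde g$ still satisfies the smallness condition \eqref{R-def} (with the same $R$, after absorbing a negligible fraction of $\epsilon$), so Lemma \ref{l:out} applies to $\tilde g$ and guarantees that any $\tilde g$-geodesic which leaves $B$ escapes to spatial infinity without returning to $B/2$. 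It remains to control $\tilde g$-geodesics while they sit in $2B$.

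For this I would fix a unit-cospeed $\tilde g$-geodesic $(\tilde x^t, \tilde \xi^t)$ and compare it to the $g$-geodesic $(x^t, \xi^t)$ with matching initial data on the cosphere bundle over $2B$. Subtracting the ODE systems \eqref{geodesic1} for the two metrics gives a linear system for the difference with coefficient matrix bounded pointwise by $\|g\|_{C^2} + \|\tilde g\|_{C^2} \lesssim_M 1$ and forcing bounded by $\|\tilde g - g\|_{C^2}$. Gronwall over the interval $[0,L]$, which by \eqref{Ldef} is an upper bound for the arc-length the $g$-geodesic spends in $2B$, yields
\[
\sup_{t \in [0, L]}\left(|\tilde x^t - x^t| + |\tilde \xi^t - \xi^t|\right) \lesssim_M e^{C(M) L} \|\tilde g - g\|_{C^2} \leq e^{(C(M) - C_0(M))L}.
\]
Taking $C_0(M) \gg C(M)$ forces the right-hand side to be arbitrarily small; thus the $\tilde g$-geodesic exits $2B$ within arc-length at most, say, $2L$, and by the previous paragraph escapes to infinity.

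The main obstacle is promoting this pointwise comparison into a uniform bound $\tilde L \lesssim L$ for the perturbed metric, since the parameter in \eqref{Ldef} is a supremum over all geodesics crossing $2B$. For this I would revisit the compactness argument showing $L < \infty$: the projected flow \eqref{geodesic1} for $\tilde g$ is $C^1$-close to that for $g$ uniformly on the compact cosphere bundle over $2B$, and upper semicontinuity of the exit-time functional under such perturbations of a nontrapping flow gives $\tilde L \leq 2L$ once $C_0(M)$ is taken large. A small additional check is required at the boundary $\partial(2B)$ to ensure that $\tilde g$-incoming rays correspond to nearby $g$-incoming (or at worst $g$-tangent) rays, but this again follows from the $C^1$-closeness of the two vector fields on the cosphere bundle. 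This yields the claimed comparability of the nontrapping parameters $R$ and $L$.
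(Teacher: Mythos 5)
Your proof is correct in substance and follows the same overall blueprint as the paper (reduce to a fixed-time $C^2$ comparison, apply Lemma~\ref{l:out} in the exterior, then run a Gr\"onwall comparison of the projected Hamilton flows over an interval of length $\lesssim L$, and choose $C_0(M)$ large enough to beat the Gr\"onwall exponential). Where you differ is in the mechanics of the flow comparison: you subtract the two ODE systems \eqref{geodesic1} for $g$ and $\tilde g$ directly and Gr\"onwall the resulting linear inhomogeneous system for $(\delta x, \delta\xi)$, whereas the paper instead introduces the one-parameter family $g(\cdot\,;h)=(1-h)g_0+hg_1$, differentiates the flow in $h$ to get a genuinely linear system for $(y_h,\eta_h)$ with zero initial data, and recovers the endpoint difference via the Mean Value Theorem. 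The two are essentially equivalent; the paper's homotopy is a mild convenience because the $h$-derivative system is exactly linear, whereas your direct subtraction produces a system whose coefficients depend on both trajectories, so the $\lesssim_M$ bound on those coefficients itself requires knowing the trajectories stay close. You do invoke this implicitly, but you should state the bootstrap hypothesis explicitly: assume $|\delta x|+|\delta\xi|\lesssim e^{-C(M)L}$ on the interval in question, derive the improved bound, and close. The paper states this bootstrap step explicitly and you should too, since it is what justifies both the time-interval bound $|I|\lesssim L$ and the pointwise coefficient bounds.

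Two smaller remarks. First, your closing paragraph appeals to ``upper semicontinuity of the exit-time functional'' to get $\tilde L\le 2L$; this is the right intuition, but the paper gets the same conclusion more directly from the trajectory-level estimate, so it is cleaner to simply observe that since every $\tilde g$-bicharacteristic stays within $O(e^{-C(M)L})$ of a $g$-bicharacteristic while in $100B_R$, and the latter exits $2B_R$ within arc-length $L$, the former exits within $2L$. Second, be slightly careful with the claim ``$\tilde g$ still satisfies \eqref{R-def} with the same $R$'': the $C^2$ smallness of $\tilde g - g$ follows from $\|w\|_{\ell^1 X^{s_0}}\le e^{-C_0(M)L}$, but the relevant exterior smallness in \eqref{R-def} is measured in $\ell^1 H^{s_0}(B_{R}^c)$, so you should note explicitly (as the paper does) that $\|\chi_{>R/2}(g(u_0+w)-I)\|_{\ell^1 H^{s_0}}\lesssim \epsilon + e^{-C_0(M)L}\lesssim\epsilon$, which is what makes Lemma~\ref{l:out} applicable to $\tilde g$.
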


\begin{proof}
We denote  the two metrics by 
\[
g_0 = g(u_0), \qquad g_1 = g( u_0+w).
\]
By hypothesis we have 
\[
\| u_0 - ( u_{0}+w)\|_{\ell^1 X^{s_0}} \lesssim e^{-C_0(M) L}.
\]
Hence, for $R$ chosen as in \eqref{R-def}, outside $B_R/2$ both metrics are close to the Euclidean metric, 
\[
\|\chi^{ext}_{B/2} (g_0 - I) \|_{\ell^1 X^{s_0}} + \|\chi^{ext}_{B/2}(g_1 - I) \|_{\ell^1 X^{s_0}}\lesssim \epsilon + e^{-C_0(M) L} \lesssim \epsilon
\]
for some $\epsilon > 0$ and the analysis in Lemma~\ref{l:out} equally applies. It remains to 
compare their Hamilton flows in $B_R$, where the two metrics are close,
\[
\| g_0 - g_1\|_{C^2(B_R)} \lesssim e^{-C_0(M) L}.
\]

We begin with the Hamilton flows of both problems and show that the
trajectories of the two are close on $B_{R}$.  Namely we take
\begin{align}
\label{hf1}
& \dot x_i = g_0^{ij} (x) \xi_j, \\
& \dot \xi_i = -\partial_i g_0^{jk}(x) \xi_j \xi_k + (\xi\cdot\partial g_0^{jk}(x)\xi_j\xi_k)\xi_i  \notag
\end{align}
and
\begin{align}
\label{hf2}
& \dot {\tilde{x}}_i = g_1^{ij} (t,\tilde x) \tilde{ \xi}_j, \\
& \dot {\tilde{\xi}}_i = - \partial_i g_1^{jk}(t, \tilde x) \tilde{\xi}_j \tilde{\xi}_k + (\tilde{\xi}\cdot \partial g_1^{jk}(t,\tilde{x})\tilde{\xi}_j\tilde{\xi}_k)\tilde{\xi}_i \notag
\end{align}
with the same initial data $(x_0,\xi_0)$ at $t = 0$ where $|\xi_0|=1$, and $x_0 \in B_R$.
Our goal will be to prove
\begin{equation}
\label{hhfbd}
|x(t) - \tilde x(t)| + |\xi(t) - \tilde \xi(t)| \lesssim e^{-C(M) L} \ll 1.
\end{equation}

Rather than estimating the difference directly, it  is perhaps easiest to 
consider a one parameter family of metrics
\[
g(\cdot; h)  = (1-h) g_0 + h g_1, \qquad h \in [0,1],
\] 
define the flow $(y (t; h), \eta (t; h))$ using
\begin{align}
\label{hf1_h}
& \dot {y}_i = g^{ij} (t, y; h) \eta_j, \\
& \dot {\eta}_i = -\partial_i g^{jk}(t, y; h) \eta_j \eta_k + (\eta\cdot \partial g^{jk}(t, y; h) \eta_j\eta_k)\eta_i  \notag
\end{align}
and then differentiate in $h$.  Note that using this notation we have $(y,\eta) (t; 0) = (x(t),\xi(t))$ and $(y,\eta) (t; 1) = (\tilde x(t), \tilde \xi(t))$.  For this family we seek to prove
\begin{equation}\label{hhfbd+}
|y(t)-x(t)|+|\eta(t)-\xi(t)|\lesssim e^{-C(M)L}\ll 1
\end{equation}
uniformly in $h \in [0,1]$, which will yield \eqref{hhfbd} upon choosing $h=1$.
We shall prove \eqref{hhfbd+} using a bootstrapping argument.  We assume \eqref{hhfbd+}, and we shall prove the same with an improved constant.

The $h$ derivatives $(y_h,\eta_h)$ solve the differentiated system
\begin{align}
\label{hf1alt}
\left\{ \begin{array}{l}
 \dot  y_h  =  (g_1-g_0) (t,y) \eta  + y_h\cdot\partial (g(t,y; h) )\eta + g(t,y;h) \eta_h   ,    \\
 \dot \eta_h = -\eta \partial(g_1 -g_0) (t,y) \eta -  \eta y_h\cdot \partial \partial g(t,y;h) ) \eta - 2 \eta \partial g(t,y;h) \eta_h 
\\\qquad\qquad\qquad\qquad\qquad + (\eta \eta_h\cdot\partial g(t,y;h) \eta)\eta 
 + (\eta \eta\cdot\partial (g_1-g_0)(t,y) \eta)\eta
\\\qquad\qquad\qquad + (\eta (y_h\cdot\partial) \eta\cdot\partial g(t,y;h) \eta)\eta
 + 2(\eta \eta\cdot\partial g(t,y;h)\eta_h)\eta
 + (\eta \eta\cdot\partial g(t,y;h) \eta)\eta_h
 .
 \end{array} \right.
\end{align}
By the Mean Value Theorem, in order to establish \eqref{hhfbd+}, it suffices 
to establish the bound 
\begin{equation}
\label{hf1altv}
|y_h(t)|+ |\eta_h(t)| \lesssim e^{-2C(M) L} \ll 1
\end{equation}
uniformly for $h \in [0,1]$.

Due to the bootstrapping hypothesis \eqref{hhfbd+}, it suffices to consider \eqref{hf1alt} on $I$, which denotes the maximal time interval that a bicharacteristic for $g_0$ spends in $B_R$.  We note that $|I|\lesssim L$.  

Since the flow \eqref{hf1_h} is projected onto the cosphere bundle, we
observe that
\[\frac{d}{dt}(y_h^2+\eta_h^2) \lesssim M^{-1}e^{-2C_0(M)L}+ M (y_h^2 + \eta_h^2).\]
Using Gr\"onwall's inequality and that $|I|\lesssim L$, we obtain
\[|y_h|+|\eta_h| \lesssim M^{-1/2} L^{1/2} e^{-C_0(M)L} e^{CML}\lesssim e^{-C_0(M)L} e^{C(M)L}.\]
which for a choice of $1\ll C(M)< \frac{1}{3}C_0(M)$ proves \eqref{hf1altv} with the improved constant as desired.

We now have the uniformity of the nontrapping assumption as all trajectories exiting the ball are now sufficiently close.  Now it is clear why the $C^2$
 difference between the metrics needs to be small compared to $e^{-C_0(M)L}$. 
\end{proof}

\section{The Linear Flow}
\label{sec:LED1}

In this section we consider the $L^2$ well-posedness question for the linear 
Schr\"odinger flow
\begin{equation} \label{linear}
(i \partial_t +  \partial_k g^{kl}   \partial_l   + b\cdot \nabla)w  + \tilde b \cdot\nabla \bar{w}
= f,  \ \
w(0,x) = w_0,
\end{equation}
for large but nontrapping metrics $g$.  We also consider the corresponding 
linear paradifferential flow
\begin{equation} \label{linear-para}
(i \partial_t +  \partial_k T_{g^{kl}}   \partial_l   + T_b  \cdot\nabla) w 
+ T_{\tilde b} \cdot\nabla \bar{w}
= f,  \ \
w(0,x) = w_0.
\end{equation}
To frame the question in the context of the
previous section where nontrapping is discussed, we consider metrics $g$ and 
lower order perturbations $b$ satisfying the following properties with respect to 
parameters $M$, $R$ and $L$, which themselves are subject to 
\[
\log M \ll \log R \leq \log L:
\]

\begin{itemize}
\item Large size:
\begin{equation}
\label{gbassump}
\| g-I_d\|_{\ell^1 X^{s_0}} + \|g_t\|_{\ell^1 X^{s_0-2}} + \|(b, \tilde b)\|_{\ell^1 X^{s_0-1}} + \|(b_t, \tilde b_t)\|_{\ell^1 X^{s_0-3}} \leq M.
\end{equation}
\item
Smallness outside a ball $B_R$:
\begin{equation}
\label{smallassump}
  \| g-I_d\|_{\ell^1 X^{s_0}(B_R^c)}  + \| (b,\tilde b)\|_{\ell^1 X^{s_0-1}(B_R^c)}   
\leq \epsilon \ll 1
\end{equation}
for a fixed universal constant $\epsilon$.

\item Uniform nontrapping: For each geodesic (projected onto the cosphere bundle as in \eqref{geodesic1}) $\gamma$ at fixed time, we have 
\begin{equation}
\label{Lassump}
l(\gamma \cap 2B_R) \leq L.
\end{equation}
\end{itemize}

Our goal is to understand the energy and local energy bounds for such
a flow.  Since we will want to apply these bounds in order to solve a
nonlinear equation, it is crucial to have good control in a way that does not grow rapidly in time.  This is a
delicate matter, since the nontrapping property allows energy
growth by a $e^{C(M)L}$ factor within $B_R$ . Further, the coefficients at high frequencies may
repeatedly redirect inwards some of the outgoing energy, for additional
potential exponential growth (as we cannot assume $\epsilon$
is exponentially small with respect to $L$). 

We can prevent such iterated growth by restricting the time to a short enough 
interval:

\begin{thm}\label{t:le-L2}
Suppose that $g,b,\tilde b$ are as above, with 
$s_0 > \frac{d}{2}+2$ and associated parameters $M,R,L$.
Then the equations \eqref{linear} and \eqref{linear-para} are well-posed in $L^2$. Further,
for all small enough times $T$,
\begin{equation}\label{small-t}
T \leq e^{-C(M) L}
\end{equation}
we have a uniform bound
\begin{equation}\label{e-L2}
\| w\|_{X^0 [0,T]} \lesssim   e^{C(M) L} ( \|w_0\|_{L^2} + \| f \|_{Y^0 [0,T]} ).
\end{equation}
\end{thm}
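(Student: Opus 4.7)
The plan is to follow the divide-and-conquer strategy outlined in Section~\ref{outline}, Steps 5 and 6: first establish a high-frequency local energy estimate that allows low-frequency errors but no smallness in $T$, then close by exploiting the brevity of the time interval through H\"older in time. Well-posedness in $L^2$ will follow from these estimates and their adjoint counterparts by the usual duality plus regularization argument.

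For the high-frequency bound
\begin{equation*}
\|w\|_{X^0[0,T]} \lesssim e^{C(M)L}\bigl(\|w_0\|_{L^2} + \|f\|_{Y^0[0,T]} + \|w\|_{L^2 L^2[0,T]}\bigr),
\end{equation*}
the method is a positive commutator argument of the form
\begin{equation*}
\frac{d}{dt}\langle Q w,w\rangle = \langle (Q_t + i[P,Q])w,w\rangle + 2\,\Im \langle Q w, f\rangle,
\end{equation*}
where $P$ is the paradifferential operator on the left of \eqref{linear-para}. The multiplier $Q$ is assembled from three pieces matched to the geometry: an incoming-ray symbol $Q_{in}$ capturing rays entering $2B_R$, a compact symbol $Q_{comp}$ supported in $2B_R$ and built from an escape function for the geodesic flow of $a(x,\xi)=g^{jk}\xi_j\xi_k$, and an exterior piece $Q_{out}$ of the type used in the small-data theory of \cite{MMT3,MMT4}. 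Since the smallness \eqref{smallassump} holds outside $B_R$, the incoming and exterior multipliers contribute only $O(1)$ constants, and their analysis follows from the metric-perturbation arguments already available in \cite{MMT3,MMT4}. For $Q_{comp}$, a Melrose--Sj\"ostrand style construction, applied on the cosphere bundle and combined with the uniform nontrapping length bound \eqref{Lassump}, yields an escape function $q$ for which $\{a,q\}$ is coercive on $2B_R$, with total symbol size $O(e^{C(M)L})$. After Weyl-quantizing (truncated to each dyadic frequency) and gluing to $Q_{in}$ and $Q_{out}$, we obtain an operator $Q$ satisfying $\langle Qw,w\rangle \lesssim e^{C(M)L}\|w\|_{L^2}^2$ and whose commutator with $P$ dominates $\sum_j 2^j \|S_j w\|^2_{L^2 L^2(2B_R)}$, modulo (a) low-frequency errors absorbed in $\|w\|_{L^2 L^2}$ and (b) paradifferential remainder terms controlled by the Weyl symbolic calculus and the bounds in Section~\ref{sec:mult}. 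The coefficients $b,\tilde b$ enter the commutator as magnetic-type lower order terms, but are dominated by the positivity of $\{a,q\}$ because they share the smallness in \eqref{smallassump} and the Sobolev control in \eqref{gbassump}. Integrating in time and handling the forcing by Cauchy--Schwarz yields the claimed high-frequency bound.

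To close, H\"older in time gives $\|w\|_{L^2 L^2} \lesssim T^{1/2}\|w\|_{L^\infty L^2} \lesssim T^{1/2}\|w\|_{X^0[0,T]}$, so multiplying by $e^{C(M)L}$ and using \eqref{small-t} with a sufficiently large constant in the exponent ensures $T^{1/2} e^{C(M)L} \ll 1$; the low-frequency term is then absorbed into the left-hand side and \eqref{e-L2} follows. The main obstacle, and the principal novelty relative to \cite{MMT3,MMT4}, is the construction of $Q_{comp}$: the escape function must be produced with size precisely exponential in $L$ and polynomial in $M$ in the exponent, without any additional factor of $L$ inside the exponential, since otherwise no choice of $T$ in \eqref{small-t} can counter the growth when compared against the loss we will have to absorb in the nonlinear iteration \eqref{eqn:nliter}. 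A secondary difficulty is that the quantization must be compatible with the limited regularity $g\in \ell^1 X^{s_0}$, so the symbolic calculus has to be carried out with paradifferential truncations of the symbol and with remainder terms tracked carefully against the mixed-norm spaces of Section~\ref{sec:boot}.
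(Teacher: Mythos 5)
Your high-level strategy matches the paper's: prove a high-frequency local energy bound with a low-frequency error term via positive commutators (Proposition~\ref{p:le-L2-lot}), then absorb the error by H\"older in time using the shortness of $[0,T]$. The escape-function construction with $e^{C(M)L}$ symbol bounds and the care about low-regularity Weyl calculus are also in the right spirit. However, there is a genuine gap.

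You treat the $\tilde b\cdot\nabla\bar w$ term as a "magnetic-type lower order term dominated by the positivity of $\{a,q\}$ because of the smallness in \eqref{smallassump}." This is incorrect on two counts. First, \eqref{smallassump} gives smallness of $b,\tilde b$ only \emph{outside} $B_R$; inside $B_R$ they are only bounded by $M$, which is large. Second, and more fundamentally, the $\tilde b\cdot\nabla\bar w$ term is $\CC$-bilinear in $(w,\bar w)$ rather than sesquilinear, so when you differentiate $\langle Qw,w\rangle$ the resulting pairing $\Re\langle Qw, i\,T_{\tilde b}\cdot\nabla\bar w\rangle$ has no sign structure to feed into the commutator positivity — it produces an order-one loss proportional to $M$ that cannot be beaten by $\{a,q\}$. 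The paper addresses this in Section~\ref{s:barw} by a preliminary renormalization $\tilde w = w + \mathcal R\bar w$ with a paradifferential operator $\mathcal R$ of order $-1$ (symbol given by \eqref{r-def}), chosen so that the conjugated equation \eqref{linear-para1} has no $\nabla\bar w$ term, only perturbative errors. Without this conjugation your positive commutator argument does not close. (There are also smaller differences: the paper does not glue a single multiplier $Q$ from three pieces — for the exterior it truncates $w$ and applies the small-data result directly rather than constructing a $Q_{out}$; and the G{\aa}rding step inside $B_R$ requires a sum-of-squares substitute because the commutator symbol is only $C^1 S^1$ — but the missing $\bar w$ correction is the essential omission.)
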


For simplicity the above result is stated in the $L^2$ setting, but a similar result 
also holds in higher norms:

\begin{cor}
\label{lincor1}
Suppose that $g,b,\tilde b$ are as above, with $ s_0 > \frac{d}{2}+2$ and
associated parameters $M,R,L$.  Then
the equation \eqref{linear} is well-posed in $H^\sigma$ and $\ell^1
H^\sigma$ for $0 \leq \sigma \leq s_0$.  Further, for all small enough
times $T$,
\begin{equation}
T \leq e^{-C(M) L},
\end{equation}
we have a uniform bound
\begin{equation}\label{le-l1-sigma}
\| w\|_{\ell^1 X^\sigma [0,T]} \lesssim   e^{C(M) L} ( \|w_0\|_{\ell^1 H^\sigma} + \| f \|_{\ell^1 Y^\sigma} )   
\end{equation}
\begin{equation}\label{le-sigma}
\| w\|_{\ell^2 X^\sigma  [0,T]} \lesssim   e^{C(M) L} ( \|w_0\|_{H^\sigma}  + \| f \|_{\ell^2 Y^\sigma})  .
\end{equation}
The same result holds for \eqref{linear-para} for all $\sigma \geq 0$.
\end{cor}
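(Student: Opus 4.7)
The plan is to reduce the corollary to the $L^2$ bound in Theorem~\ref{t:le-L2} by a frequency-by-frequency analysis, handling the paradifferential version first and then transferring to the full equation. First, for the paradifferential flow \eqref{linear-para}, apply $S_k$ to both sides. Then $w_k = S_k w$ solves the same paradifferential equation with forcing $S_k f + E_k$, where $E_k$ collects the commutators $[S_k, \partial_j T_{g^{jl}} \partial_l]$, $[S_k, T_b \cdot \nabla]$ and $[S_k, T_{\tilde b}\cdot \nabla]$. The paradifferential calculus gives a gain of one derivative in each such commutator, which together with the bilinear bounds of Proposition~\ref{p-XXY+} yields an estimate of the form $\|E_k\|_{l^p_k Y_k} \lesssim T^\delta M\, c_k$ for an admissible $\ell^p H^\sigma$ frequency envelope $c_k$ of $w$. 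Applying Theorem~\ref{t:le-L2} to $w_k$ (which is now localized near frequency $2^k$), multiplying by $2^{k\sigma}$ and summing in $k$ in the $\ell^p$ sense using translation invariance of the $L^2$ bound on each spatial $2^k$-cube, the commutator contribution is absorbed on the left by the small $T^\delta$ factor, since the time restriction \eqref{small-t} beats the implicit constants in Section~\ref{sec:mult}.

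Next, for the full equation \eqref{linear}, rewrite it in paradifferential form by moving the high-high and low-high unbalanced interaction terms
\[
\partial_j (g^{jl} - T_{g^{jl}}) \partial_l w + (b - T_b)\cdot \nabla w + (\tilde b - T_{\tilde b}) \cdot \nabla \bar w
\]
to the right-hand side. By Proposition~\ref{p:XX+} and Proposition~\ref{p-XXY+}, together with the paradifferential symbol calculus (which controls the first term after writing $g - T_g$ via paraproducts and residual terms), these errors are bounded in $\ell^p Y^\sigma$ by $T^\delta M \|w\|_{\ell^p X^\sigma}$ provided $0 \leq \sigma \leq s_0$. This is precisely where the upper bound $\sigma \leq s_0$ comes from, since the bilinear estimates close only when the coefficient space sits at a strictly higher regularity than the solution space. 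Applying the previously-established paradifferential bound \eqref{le-l1-sigma} (for $p=1$) or its $\ell^2$ counterpart (for $p=2$) to this modified equation, and absorbing the perturbation via the $T^\delta$ factor, we obtain \eqref{le-l1-sigma} and \eqref{le-sigma} for \eqref{linear}.

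The main obstacle will be ensuring that the exponential constant $e^{C(M)L}$ from Theorem~\ref{t:le-L2} does not compound through the frequency-by-frequency argument and the absorption of the perturbation terms. This is precisely the role of choosing $T$ exponentially small as in \eqref{small-t}: the perturbation bounds all carry a $T^\delta$ factor which beats the exponential growth, allowing closure without deterioration of the constant. A secondary technical point is the handling of the spatial $\ell^1_j X_j$ summation on each dyadic block; this is resolved by applying the $L^2$ bound to cube-localized data and summing, using the translation invariance and locality properties of the energy norm built into the $X$-spaces of Section~\ref{sec:boot}, in the spirit of the analogous passages in \cite{MMT3}.
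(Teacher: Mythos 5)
Your proposal diverges from the paper's route in ways that matter, and there is one genuine gap.

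The principal gap is the $\ell^1$ spatial summability. You propose to obtain $\|w\|_{\ell^1 X^\sigma}$ from Theorem~\ref{t:le-L2} by ``applying the $L^2$ bound to cube-localized data and summing, using translation invariance.'' This cannot work as stated. The $L^2$ estimate \eqref{e-L2} is for a solution of the \emph{global} equation; cube-localizing $w$ at scale $2^j$ produces functions that no longer solve \eqref{linear-para}, and the commutator errors are not small. More importantly, the constant $e^{C(M)L}$ in Theorem~\ref{t:le-L2} is not translation-invariant: it encodes the nontrapping geometry of $g$ near $B_R$, and cubes far from $B_R$ see a different (small-coefficient) problem. The paper instead splits into a compact region and an exterior: on $B_{4R}$ the $\ell^1_j X_j$ norms are controlled by the global $X^0$ norm with an $R$-dependent factor that is absorbable into $e^{C(M)L}$ (since $\log R \lesssim L$), while on the exterior the truncated solution $w_{ext}=\chi_{>2R}w$ solves a small-data paradifferential equation, and the small-data $\ell^1 X^0$ bound \eqref{small-l1} from Theorem~\ref{t:le-small} supplies the summability. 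This structural observation is what the corollary actually rests on, and your argument is missing it.

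Two secondary points. First, the paper obtains the higher-regularity paradifferential bound \eqref{le-sigma-noT} by differentiating the equation (so that $\partial^\sigma w$ solves a coupled system of the same form) and interpolating in $\sigma$, not by a frequency-by-frequency commutator estimate for $[S_k, T_g]$; your commutator route is not wrong in spirit, but you would still need the $\ell^1$ summability addressed above at each step. Second, the paper deliberately proves an intermediate statement (Proposition~\ref{p:le-L2-lot}, and its $\sigma$ versions \eqref{le-sigma-noT}, \eqref{le-l1-sigma-noT}) with a lower-order $\|w\|_{L^2 L^2}$ error and \emph{no} time restriction, and only then applies H\"older in time to absorb it; this is precisely to avoid the circularity concern the paper flags. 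Your direct absorption of $T^\delta e^{C(M)L}$ against $\|w\|_{\ell^p X^\sigma}$ presupposes finiteness of the quantity being estimated and is less robust. Finally, for transferring to the original equation, the paper's estimate for $E=(g-T_g)\partial^2+(b-T_b)\partial$ splits at the frequency threshold $k\sim ML$ rather than carrying a uniform $T^\delta$ gain, which is the cleaner way to make the constants close.
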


This is not so much a corollary of the previous theorem, but rather of its proof. 
Precisely, we will be able to reuse the key part of the proof of the theorem, and then
replicate the (short) remaining parts of the proof.

Since the last result holds for all $\sigma$ in the case of the paradifferential equation, it easily implies 
the corresponding frequency envelope version, as a direct application
of the techniques in 
\cite[Propositions 5.1 and 5.3]{MMT3}.

\begin{cor} \label{fe-est}
Let $s_0> \frac{d}{2}+2$,  and suppose $w_0 \in \lH^{s_0}$, $f\in \lY^{s_0}$ with admissible frequency envelopes $\{ a_k \}$, $\{b_k\}$ respectively.  Then  
the solution $w$ to the paradifferential flow \eqref{linear-para} satisfies
\begin{equation}\label{fjbd}
\|S_k w\|_{\lX^{s_0}}  \lesssim e^{C(M) L} (a_k+b_k).
\end{equation}
\end{cor}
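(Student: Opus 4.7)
The plan is to obtain the frequency envelope bound by applying Corollary \ref{lincor1} at regularity $\sigma = s_0$ to each frequency-localized piece $S_k w$ of the solution, following the standard paradifferential envelope mechanism of \cite[Propositions 5.1 and 5.3]{MMT3}.

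First, I apply $S_k$ to the paradifferential equation \eqref{linear-para}. The crucial structural property here is that the paraproduct $T_a b = \sum_{N \ge 4} S_{\le N-4} a \cdot S_N b$ is frequency-localized at the output: $S_k(T_a b)$ depends only on $S_j b$ for $|j-k| \le C$ with a fixed universal $C$. Consequently $S_k w$ solves a paradifferential equation of the same form as \eqref{linear-para}, with initial data $S_k w_0$ and forcing $S_k f + E_k$, where
\begin{equation*}
E_k = [S_k,\, \partial_l T_{g^{lm}} \partial_m + T_b \cdot \nabla]\, w + [S_k,\, T_{\tilde b} \cdot \nabla]\, \bar w
\end{equation*}
gathers the commutator errors. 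By the standard paradifferential commutator calculus, combined with the coefficient size bound \eqref{gbassump} and the dyadic bilinear bounds \eqref{XXY-hl}, \eqref{XXY-hh}, one gets a localized estimate
\begin{equation*}
\|E_k\|_{\ell^1 Y^{s_0}} \lesssim C(M) \sum_{|j-k| \le C} 2^{-\delta|j-k|}\|S_j w\|_{\ell^1 X^{s_0}},
\end{equation*}
so that $E_k$ depends linearly on only the frequency-nearby pieces of $w$.

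Second, I apply \eqref{le-l1-sigma} with $\sigma = s_0$ to $S_k w$ to obtain
\begin{equation*}
\|S_k w\|_{\ell^1 X^{s_0}} \lesssim e^{C(M) L}\bigl(\|S_k w_0\|_{\ell^1 H^{s_0}} + \|S_k f\|_{\ell^1 Y^{s_0}} + \|E_k\|_{\ell^1 Y^{s_0}}\bigr).
\end{equation*}
The first two right-hand terms are bounded by $a_k$ and $b_k$ respectively, by the first defining property of admissible frequency envelopes.

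Finally, I close the bound. Define the candidate envelope $\tilde c_k := e^{C(M) L}(a_k + b_k)$. By the slow and uniform variation properties (3)--(4) of admissible envelopes, $\tilde c_j \simeq \tilde c_k$ for $|j-k|\le C$. Combining the two displays above with the already-established global control of $w$ in $\ell^1 X^{s_0}$ from Corollary \ref{lincor1}, and iterating the envelope estimate over the finite range $|j-k|\le C$ in the usual paradifferential bootstrap manner, one obtains the claimed bound $\|S_k w\|_{\ell^1 X^{s_0}} \lesssim e^{C(M)L}(a_k + b_k)$.

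The main obstacle is the careful treatment of the commutator $E_k$: because $g, b, \tilde b$ are large rather than small, one cannot afford to lose the paradifferential localization when estimating commutators, as this would replace $a_k + b_k$ on the right-hand side by the full norms of the data and destroy the envelope structure. The output frequency localization of the paraproduct is precisely the feature that keeps $E_k$ supported at dyadic scales comparable to $2^k$ with a cost polynomial in $M$ absorbed into the exponential factor, and this is exactly why the reduction to the paradifferential equation \eqref{linear-para} was carried out in the first place.
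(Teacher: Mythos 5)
Your approach --- commuting $S_k$ through \eqref{linear-para}, bounding the commutator $E_k$, and then applying \eqref{le-l1-sigma} to $S_k w$ --- is genuinely different from what the paper is pointing to with its remark that ``the last result holds for all $\sigma$.'' The route the paper intends is to use linearity of \eqref{linear-para}: decompose $w_0 = \sum_j S_j w_0$, $f = \sum_j S_j f$, let $w^{(j)}$ be the solution with data $(S_j w_0, S_j f)$, and then apply \eqref{le-l1-sigma} at two \emph{different} regularities. For $j < k$ one uses $\sigma = s_0 + N$ with $N$ large (available precisely because the paradifferential estimate holds for all $\sigma \geq 0$), which gives $\|S_k w^{(j)}\|_{\lX^{s_0}} \lesssim e^{C(M)L} 2^{-N(k-j)}(a_j + b_j)$; for $j > k$ one uses $\sigma = 0$, giving $\|S_k w^{(j)}\|_{\lX^{s_0}} \lesssim e^{C(M)L} 2^{-s_0(j-k)}(a_j + b_j)$. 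Both are summable against the envelope's slow/uniform variation properties (3)--(4), which closes the bound without ever touching a commutator.

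There is a genuine gap in your version. The commutator $E_k$ involves low-frequency pieces of $g,b,\tilde b$ against $\nabla S_{[k-C,k+C]}w$, and the best you can get from \eqref{gbassump} together with \eqref{XXY-hl}, \eqref{XXY-hh} is a bound of the schematic form $\|E_k\|_{\lY^{s_0}} \lesssim C(M)\sum_{|j-k|\le C}\|S_j w\|_{\lX^{s_0}}$ with a large constant $C(M)$ (e.g., on the order of $\|g-I\|_{\lX^{s_0}}\lesssim M$), \emph{with no smallness factor}. After applying \eqref{le-l1-sigma} you therefore obtain
\begin{equation*}
\|S_k w\|_{\lX^{s_0}} \lesssim e^{C(M)L}(a_k + b_k) + e^{C(M)L}\,C(M)\sum_{|j-k|\le C}\|S_j w\|_{\lX^{s_0}},
\end{equation*}
and the coefficient $e^{C(M)L}C(M)$ multiplying the $\|S_j w\|$ terms is enormous, not $<1$. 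The ``iteration over the finite range $|j-k|\le C$'' you invoke would have to be a contraction to close, and it is not; the global $\lX^{s_0}$ control from Corollary~\ref{lincor1} does not rescue it, since a bounded sequence with $d_k \lesssim c_k + K\sum_{|j-k|\le C}d_j$ and $K\gg 1$ need not satisfy $d_k\lesssim c_k$. Your closing remark that the polynomial-in-$M$ cost can be ``absorbed into the exponential factor'' misidentifies the problem: the exponential is already present from \eqref{le-l1-sigma}, and the obstruction is the lack of a small multiplier on the coupling term, not the size of the prefactor. To rescue this route one would have to extract a genuine smallness factor from the commutator --- for instance by redoing the estimate with the $T^\delta$-gaining bilinear bounds of Proposition~\ref{p-XXY+} and then leveraging $T\le e^{-C(M)L}$ --- but that requires care with the accompanying $\delta$-loss of regularity, and you have not done it. The dyadic splitting argument avoids these issues altogether and is what the paper's remark about ``all $\sigma$'' is signalling.
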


The essential part of Theorem~\ref{t:le-L2} is the energy estimate \eqref{e-L2}.
The $L^2$ well-posedness follows in a standard fashion from a similar 
energy estimate for the (backward) adjoint equation. Since the adjoint equation has
a similar form, with similar bounds on the coefficients, such an estimate follows
directly from \eqref{e-L2}. Thus, in what follows we focus on the proof of 
the bound \eqref{e-L2}.

An important part of the theorem is to keep good track of the dependence of the constants 
on our parameters $M$, $L$ and $T$. In order to avoid circular arguments, it is useful to apply 
a divide and conquer strategy. One step in this direction is to take $T$ out of the equation, at the expense
of allowing a lower order term in the estimate. We state this intermediate result as follows:

\begin{prop}\label{p:le-L2-lot}
Suppose that $g$, $b$, and $\tilde{b}$ are as above, with 
$s_0 > \frac{d}{2}+2$ and associated parameters $M,R,L$.
Then for any solution $w$ to \eqref{linear} or \eqref{linear-para} and all $T \leq 1$ we have a uniform bound
\begin{equation}\label{e-L2+lot}
\| w\|_{X^0  [0,T]} \lesssim   e^{C(M) L} ( \|w_0\|_{L^2} + \| f \|_{Y^0 [0,T]} + \| w\|_{L^2 L^2[0,T]} ).
\end{equation}
\end{prop}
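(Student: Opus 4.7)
The plan is to prove Proposition~\ref{p:le-L2-lot} via a positive commutator argument, constructing a carefully chosen pseudodifferential multiplier $Q$ whose commutator with the paradifferential Schr\"odinger operator
$P = i\partial_t + \partial_k T_{g^{kl}} \partial_l + T_b\cdot\nabla + T_{\tilde b}\cdot\nabla(\bar{\cdot})$
is coercive in the local energy norm $X^0$, modulo low-frequency errors that we are allowed to absorb into the $\|w\|_{L^2L^2}$ term on the right. The estimate for the full equation \eqref{linear} then follows from the one for \eqref{linear-para} by moving the paradifferential remainders to the right-hand side and applying the bilinear bounds of Section~\ref{sec:mult} with $\sigma = 0$; since we are not trying to gain a power of $T$ at this stage, this step is straightforward.

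Following the trichotomy outlined in Item 5 of Section~\ref{outline}, I would split $Q = Q_{\text{in}} + Q_{\text{comp}} + Q_{\text{out}}$ according to three regions in phase space. Outside the ball $B_R$, the smallness \eqref{smallassump} lets $Q_{\text{out}}$ be a small perturbation of the classical Morawetz/local smoothing multiplier, as in \cite{MMT3,MMT4}. The incoming multiplier $Q_{\text{in}}$ is a bounded symbol supported in the incoming set $\{x\cdot\xi < 0, |x|\sim R\}$, and is designed so that its Poisson bracket $\{a,Q_{\text{in}}\}$ is nonnegative along rays entering $2B_R$ — this propagates the incoming energy into the compact region without inflation. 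The delicate piece is $Q_{\text{comp}}$, which I would construct by transporting a positive symbol along the bicharacteristic flow \eqref{geodesic1} of $g(u_0)$: declare $H_p Q_{\text{comp}} = \chi_{2B_R}$ along each ray, and solve by integrating backwards along the flow starting from the outgoing boundary. By the nontrapping bound \eqref{Lassump}, each ray spends time at most $\lesssim L$ inside $2B_R$, so this construction can be carried out, but the price is that $Q_{\text{comp}}$ has amplitude of size $e^{C(M)L}$. This is precisely the source of the exponential constant in \eqref{e-L2+lot}.

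Once $Q$ is in hand, the standard commutator identity
\[
\tfrac{d}{dt}\langle Qw,w\rangle = \langle i[P_0,Q]w,w\rangle + 2\Re\langle Qw, -if - R_1 w\rangle,
\]
with $P_0$ the principal paradifferential part and $R_1$ the lower-order terms, together with the positivity of $H_p Q$ and the sharp G{\aa}rding inequality applied to the paradifferential quantization of $Q$, gives on integration over $[0,T]$ a bound of the form
\[
\|w\|_{X^0[0,T]}^2 \lesssim e^{C(M)L}\bigl(\|w_0\|_{L^2}^2 + \|f\|_{Y^0[0,T]}^2\bigr) + \text{(symbol-calculus errors)}.
\]
The error terms arise from the nonsmoothness of the coefficients $g, b, \tilde b$ (which only lie in $\ell^1 X^{s_0}$ with $s_0 > d/2+2$), from the G{\aa}rding remainder, and from commuting $Q$ with the first-order terms $T_b, T_{\tilde b}$. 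All of these can be bounded by $\|w\|_{L^2L^2[0,T]}$ times an $e^{C(M)L}$ factor after a Cauchy–Schwarz in time, which is exactly the lot term allowed in \eqref{e-L2+lot}.

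The main obstacle I expect is twofold. First, the construction of $Q_{\text{comp}}$ and the verification that $H_p Q_{\text{comp}}$ is quantitatively positive requires new ideas, since classically one either assumes physical-space decay of the solution (which we do not here) or uses coercivity of the symbol at infinity — the authors' comment in Section~\ref{outline} that a new multiplier construction is needed fits precisely here. Second, the book-keeping of constants is delicate: the first-order terms $T_b,T_{\tilde b}$ must be incorporated into $P_0$ rather than treated perturbatively, since otherwise Gr\"onwall-type arguments would generate further uncontrolled exponentials; the only growth that should appear is the single $e^{C(M)L}$ factor from $Q_{\text{comp}}$ itself. Keeping this clean, while still making the G{\aa}rding remainders small enough to be absorbed as lower-order terms, is where the careful ordering of large constants mentioned in the introduction becomes essential.
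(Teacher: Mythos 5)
Your high-level strategy — an incoming multiplier, a compact-region multiplier built by transporting a symbol along the Hamilton flow (with size $e^{C(M)L}$ coming from the nontrapping length $L$), and an exterior piece handled by the small-data theory — matches the paper's blueprint. But there are two concrete places where the proposal, as written, would not go through.

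The first and most serious is the antilinear term $T_{\tilde b}\cdot\nabla\bar w$. You write $P = i\partial_t + \partial_k T_{g^{kl}}\partial_l + T_b\cdot\nabla + T_{\tilde b}\cdot\nabla(\bar{\cdot})$ and then use the commutator identity $\frac{d}{dt}\langle Qw,w\rangle = \langle i[P_0,Q]w,w\rangle + \ldots$, but an operator that acts through $\bar w$ cannot be folded into a self-adjoint commutator with $Q$: the computation produces off-diagonal cross terms of the form $2\Re\langle iQ\,T_{\tilde b}\cdot\nabla\bar w, w\rangle$ which are \emph{not} controlled by the positivity of $H_a q$ and are not of lower order, since $\tilde b$ is $O(M)$ inside $B_R$, not small. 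Saying that ``$T_{\tilde b}$ must be incorporated into $P_0$'' does not explain how this can be done, and indeed it cannot be done within the multiplier argument itself. The paper handles this with a separate preliminary normal form: the conjugation $\tilde w = w + \mathcal R\bar w$, where $\mathcal R$ is an order $-1$ paradifferential operator whose symbol $r = (1-\chi(|\xi|))\frac{i\tilde b^j\xi_j}{2g^{jk}\xi_j\xi_k}$ uses the ellipticity of $g$ to cancel the $\bar w$-term at leading order. This reduces to $\tilde b = 0$ at the cost of lower-order errors in $L^2L^2$, which is exactly the allowed loss. Without this step your commutator argument does not close.

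The second issue is the construction of $Q_{\mathrm{comp}}$. Transporting $\chi_{2B_R}$ along the Hamilton flow of $a(x,\xi)=g^{ij}(x)\xi_i\xi_j$ is the right idea, but (i) $g$ is only $C^{2,\delta}$, so the flow map has limited regularity and the resulting $q$ would \emph{not} be a genuine $S^0$ symbol — the paper first regularizes $g\to g_\lambda$ at scale $\lambda\approx e^{CML}$, constructs $q$ from the regularized flow, and then proves the two flows stay $e^{-CML}$-close so that the positivity survives the switch back to $a$; (ii) the paper uses the damped ODE $-H_{a_\lambda}q = CMq + \chi$ rather than $H_a q = \chi_{2B_R}$, and the extra $CMq$ coercivity is what lets the positive commutator absorb the lower-order and error contributions; and (iii) because the symbols involved have only $C^1$ regularity in $x$, the sharp G{\aa}rding inequality cannot be applied directly, and one has to replace it with sums-of-squares representations $c_0^{\mathrm{main}} = \sum\phi_j^2$ and then bound the error symbols as $\sum d_j\phi_j^2$ with $d_j$ of controlled size. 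Your proposal flags both the nonsmoothness of coefficients and the constant bookkeeping as potential obstacles, which is good awareness, but the specific remedies — the frequency-scale regularization of the metric and the replacement of G{\aa}rding by explicit factorizations — are where the real work lies and they are absent from the outline.

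One minor structural remark: the paper does not introduce a separate $Q_{\mathrm{out}}$ multiplier. The outgoing/exterior control comes from truncating $w$ to $\{|x|>2R\}$, applying the small-data local energy decay (which is itself proved by a Morawetz-type multiplier, so your picture is morally right), and absorbing the commutators with the cutoff via the already-established interior bound. This is equivalent in spirit to your $Q_{\mathrm{out}}$ but cleaner because it avoids re-deriving the exterior multiplier at large $R$.
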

This trivially implies Theorem~\ref{t:le-L2} by applying H\"older's inequality in time to bound the last $L^2$ norm, and
then taking $T$ small enough.

The main bound above \eqref{e-L2+lot} admits an exact counterpart in the context of Corollary~\ref{lincor1},
which is as follows:
\begin{equation}\label{le-sigma-noT}
\| w\|_{\ell^1 X^\sigma [0,T]} \lesssim  e^{C(M) L} ( \|w_0\|_{\ell^1 H^\sigma} + \| f \|_{\ell^1 Y^\sigma[0,T]}+
 \| w\|_{\ell^1 L^2 H^{\sigma}[0,T]} )   ,
\end{equation}
\begin{equation}\label{le-l1-sigma-noT}
\| w\|_{l^2 X^\sigma [0,T]} \lesssim    e^{C(M) L} ( \|w_0\|_{ H^\sigma}  + \| f \|_{l^2 Y^\sigma[0,T]}
+  \| w\|_{L^2 H^{\sigma}[0,T]} )  .
\end{equation}
Similarly, this trivially implies Corollary~\ref{lincor1} by applying H\"older's inequality in time.

The bulk of this section is devoted to the proof of this last proposition for the paradifferential equation \eqref{linear-para}.
The advantage to working with the paradifferential formulation is that it transfers 
easily to all higher regularities, whereas the paradifferential errors will only play a perturbative role.
 After that, it is much easier  to obtain the rest of the result going in reverse order.

Before presenting the proof of Proposition~\ref{p:le-L2-lot} in full detail, we first outline our strategy.
We seek to decompose the estimate into three pieces, roughly corresponding 
to a decomposition of $w$ into three components,
\[
w = \win + \wr + \wout
\]
which represent the portions of $w$ which are microlocalized near the 
incoming rays outside $B_R$, near $B_R$, respectively near the outgoing rays.
Heuristically the energy travels along bicharacteristics, and may go through all three 
stages or only the incoming and outgoing part, depending on the bicharacteristic. 
The main steps of the argument can be described as follows:

\begin{enumerate}

\item Prove an exterior incoming high frequency estimate of the form
\begin{equation*}
\| \chi_{< 100 R} \win \|_{ X^0 } \leq C \left( \| w_{0} \|_{L^2}  + \| f \|_{Y^0}\right) + C(M) \| w \|_{L^2 L^2} 
\end{equation*}
with a universal constant $C$.  This amounts to constructing a
multiplier which selects the incoming region and positive commutator
estimates. Crucially, the metric is only used in the exterior region
(outside $4B_R$), where it is a small perturbation of a flat
metric. Here the time is taken in any interval $[0,T]$ with $T \leq
1$; the smallness of $T$ is not used.

\item Using the fact that the metric  $g$ satisfies the nontrapping condition,
  we estimate the local energy inside the compact set $B_{2R}$ in terms of the incoming part,
\begin{equation*}
\| \wr \|_{ X^0} \lesssim  e^{C(M) L} \left( \| w_0 \|_{L^2}  + \| f \|_{Y^0} + 
\|\chi_{< 100 R} \win\|_{X^0} +  \| w \|_{L^2 L^2}  \right),
\end{equation*}
in a way that quantifies the potential exponential growth.

\item Rather than  controlling  the remaining component $\wout$  on the outgoing rays, we  estimate 
the full exterior part $w_{ext} = \win + \wout$
\begin{equation*}
\| w_{ext} \|_{ X^0} \lesssim   \| \wr\|_{X^0} + \| w_{0} \|_{L^2}  + \| f \|_{Y^0}  ,
\end{equation*}
simply by truncating to the exterior region and applying the small data estimate directly.

\item Combining the three estimates above we obtain
\begin{equation*}
\| w \|_{X^0} \lesssim  e^{C(M) L} \left( \| w_0 \|_{L^2}  + \| f \|_{Y^0} +  \| w \|_{L^2 L^2}  \right),
\end{equation*}
independently of the time interval size $0 < T \leq 1$.  
\end{enumerate}

\subsection{ A review of the small data results}
\label{small}
The small data problem, studied in \cite{MMT3}, provides us with a baseline for the study of the current problem.
The main assumption for the linear result there is 
\begin{equation}
\label{gbassump-small}
\| g-I_d\|_{\ell^1 X^{s}}  + \|(b, \tilde{b})\|_{\ell^1 X^{s-1}} \leq \epsilon \ll 1,
\end{equation}
where $g,b,\tilde b$ are the coefficients in \eqref{linear}.
Here the smallness of $\epsilon$ suffices to guarantee that the metric $g$ is nontrapping.
Under this assumption, we have the following counterpart of Theorem~\ref{t:le-L2}:
\begin{thm}[\cite{MMT3}, Proposition $5.2$]\label{t:le-small}
Suppose that $g$, $b$, and $\tilde b$ are as in \eqref{gbassump-small}.
Then the equation \eqref{linear} is well-posed in $L^2$. Further,
we have the uniform bounds
\begin{equation}\label{e-L2alt}
\| w\|_{X^0 [0,1]} \lesssim   \|w_0\|_{L^2} + \| f \|_{Y^0 [0,1]},
\end{equation}
\begin{equation}\label{small-l1}
\|w\|_{l^1 X^{\sigma}[0,1]}\lesssim \|w_0\|_{l^1 H^{\sigma}} + \|f\|_{l^1 Y^{\sigma}[0,1]}, \qquad 0 \leq \sigma \leq s.
\end{equation}
The same result holds for the corresponding paradifferential problem \eqref{linear-para}.
\end{thm}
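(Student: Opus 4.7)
The plan is to treat the paradifferential equation \eqref{linear-para} as a small perturbation of the free Schr\"odinger flow, establish dyadic local smoothing bounds, and sum in $\ell^1$. First, I would apply a Littlewood--Paley decomposition $w = \sum_k S_k w$ and commute $S_k$ through the equation. The resulting inhomogeneity at frequency $2^k$ is the projected source $S_k f$ together with commutator terms of the form $[S_k, T_{g^{jl}}]\partial_j \partial_l w$, $[S_k, T_b] \nabla w$, and $[S_k, T_{\tilde b}] \nabla \bar w$. The virtue of the paradifferential truncation is that these commutators couple only low-frequency pieces of the coefficients to high-frequency pieces of $w$; using the bilinear bounds of Proposition~\ref{p:XX+} together with the smallness \eqref{gbassump-small}, they are of size $\epsilon$ in $\lY^\sigma$ and can be absorbed perturbatively.

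For each fixed dyadic shell, I would then prove the single-frequency local smoothing bound
\[
\|S_k w\|_{X_k} \lesssim \|S_k w_0\|_{L^2} + \|S_k f\|_{Y_k}
\]
by a positive commutator argument with a Doi-type pseudodifferential multiplier $Q_k = \mathrm{Op}^w(q_k)$, where $q_k(x,\xi)$ is an escape function for the Hamilton flow of $a(x,\xi) = g^{jl}(x)\xi_j\xi_l$ on the dyadic region $|\xi| \sim 2^k$. Since \eqref{gbassump-small} places $g$ within $\epsilon$ of $I_d$ in $\lX^s$, the standard Euclidean escape function (a smoothed variant of $x \cdot \xi/\langle \xi \rangle$) still yields a Poisson bracket $\{a, q_k\}$ that is strictly positive and dominates the local energy density at scale $2^k$; time integration of $\frac{d}{dt}\langle Q_k S_k w, S_k w\rangle$ then produces the $X_k$ norm. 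The $\bar w$ terms are handled by pairing $Q_k \bar w$ against the conjugate equation, with the $\tilde b$ cross-term again absorbed by smallness. Square-summing in $k$ with weights $2^{2k\sigma}$ and exploiting the atomic structure of $X$ and $Y$ to make cubewise spatial estimates yields both \eqref{e-L2alt} and \eqref{small-l1} for $0 \leq \sigma \leq s$.

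To pass from the paradifferential equation \eqref{linear-para} to the full linear equation \eqref{linear}, I would decompose $g = T_g + (g - T_g)$ and similarly for $b, \tilde b$; the differences $(g - T_g) \partial^2 w$ and its analogues are balanced high-high interactions that can be placed in $\lY^\sigma$ via Proposition~\ref{p:XX+} and absorbed thanks to the smallness of the coefficients. The main obstacle is the construction of an escape function $q_k$ whose Poisson bracket with $a$ is uniformly positive across all frequencies, together with careful tracking of the symbolic-calculus errors arising from Weyl quantization. This is precisely where the smallness \eqref{gbassump-small} is essential; it is also the reason why no $L^2 L^2$ lower-order term appears in \eqref{e-L2alt} (in contrast to \eqref{e-L2+lot}) and no exponential constant $e^{C(M)L}$ is required.
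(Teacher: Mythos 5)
The overall architecture you outline — positive commutator with a Doi-type escape function, paradifferential frequency localization to make commutators with $S_k$ perturbative, treating lower-order and $\bar w$ terms as $O(\epsilon)$ errors, and transferring from \eqref{linear-para} to \eqref{linear} by absorbing the high-high remainder — is in fact the strategy used in [MMT3]. The cubic Leibniz-type structure of $T_a$, combined with the smallness \eqref{gbassump-small}, is what makes all these terms absorbable, and you identify that correctly. Where your proposal is vague enough to be a real gap is the $\ell^1$ spatial summability in \eqref{small-l1}. The positive-commutator argument with a single escape symbol delivers an $L^2$-based local energy bound, which gives $\|S_k w\|_{X_k}$ but says nothing about the $\ell^1_k$ summation over $2^k$-cubes. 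Passing to $\ell^1_k X_k$ is nontrivial for a dispersive equation with infinite speed of propagation: one must show that data and forcing concentrated in a single $2^k$-cube produce a solution whose $X_k$ mass is essentially confined to that cube (modulo rapidly decaying tails), and then sum. This requires either decomposing against $Y$-atoms and tracking the spatial spread (on the $1$-time-scale the group velocity $\sim 2^k$ spreads a $2^k$-cube by a bounded factor), or building the cube-localization directly into the multiplier; your phrase ``exploiting the atomic structure of $X$ and $Y$'' names the right ingredient but not the mechanism, and this step is where most of the technical work of [MMT3, Prop.\ 4.1, 5.2] lives.

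A smaller imprecision: a smoothed $x\cdot\xi/\langle\xi\rangle$ is not bounded and does not directly match the $X$ norm, which is a supremum over \emph{all} dyadic spatial scales $2^l$ with weights $2^{-l/2}$. The standard fix is either a logarithmically-corrected bounded escape function or a family of scale-$l$ multipliers with a supremum at the end (as in the $q_{in}$ construction with the weight $\rho_R$ in Section~\ref{incoming} of this paper); your writeup should say which one you intend and verify that the resulting Poisson bracket lower bound matches the $X$-weights rather than a fixed weight.
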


\begin{rem}
The original result in \cite{MMT3} also allows for zero order terms
in \eqref{linear}, as in \eqref{eqn:quasi-lin},  with coefficients $c,\tilde c$ with regularity $c,\tilde{c} \in \ell^1 X^{s-2}$; in that case one has to limit the upper limit for $\sigma$ in \eqref{small-l1}
to $s-1$. The $X^0$ bound \eqref{e-L2alt} follows from the proof of Proposition $4.1$.  Note also that in this result the contributions of the $b$ and $\tilde b$ terms are also perturbative.
\end{rem}

\subsection{ The  $\bar w$ correction.}
\label{s:barw}

One difficulty in the large data problem, which is absent in the small data problem, is due to the $\tilde b \nabla \bar w$
term. This is perturbative in the small data case but not in the large data case. Our goal in this section is to show that
we can eliminate  this term from the paradifferential equation \eqref{linear-para} at the expense of more perturbative
terms.

Our correction will be of the form 
\begin{equation}\label{u-to-tu}
\tilde w = Sw:= w + \mathcal{R} \bar w
\end{equation}
where $\mathcal{R}$ is a paradifferential operator of order $-1$. A similar conjugation was used in \cite{Ch} to remove the complex conjugate leading order terms.  
Assuming that $w$ solves \eqref{linear-para}, we obtain the following as the equation for $\tilde{w}$:
\begin{equation}
    \label{nowbareq}
 (i\partial_t + \p_j T^w_{g^{jk}}  \p_k  + T^w_b\cdot \nabla) \tilde w = f -
 \mathcal{R}\bar f + \p_j T^w_{g^{jk}}  \p_k \mathcal{R} \bar w + \mathcal{R} \p_j T^w_{g^{jk}}  \p_k \bar w -  
T^w_{\tilde  b}\cdot \nabla \bar w + e[w],
\end{equation}
where the lower order terms $e[w]$ are given by
\begin{equation}
\label{nowbarerror}
e[w]=T^w_b\cdot\nabla\mathcal{R}\bar{w} + \mathcal{R}T^w_{\bar{b}}\cdot\nabla \bar{w} + \mathcal{R}T^w_{\bar{\tilde{b}}}\cdot\nabla w + i \mathcal R_t \bar{w}.
\end{equation}
To cancel the $\bar w$ terms on the right in \eqref{nowbareq}, we use the ellipticity of $g$.  Namely, our assumptions on $g$ ensure that we have
\[
g^{jk}  \xi_j \xi_k \geq c_0 |\xi|^2,  \ c_0 > 0.
\] 
We then select the symbol $r(x,\xi)$ of $\mathcal{R}$ to be 
\begin{equation}\label{r-def}
r(x,\xi;t) =  (1-\chi (|\xi|))   \frac{ i  \tilde{b}^j \xi_j}{2 g^{jk} \xi_j \xi_k},
\end{equation} 
where the time dependence is implicit in the $b$ and $g$ terms. Here $\chi$ is a smooth compactly supported bump function so that  $\chi(\xi) = 1$ for $\xi$ in a large neighborhood of $0$,  depending on $M$. 

In view of the regularity properties \eqref{gbassump}
for $\tilde b$ and $g$ and using the multiplicative bounds and Moser estimates in Proposition~\ref{p:XX+}, it follows that the symbol $r$
has regularity
\begin{equation}\label{r-reg}
r \in     \lX^{s_0-1} S^{-1}, \qquad r_t \in\lX^{s_0-3} S^{-1}.
\end{equation}
Here, for symbol classes we follow the notations in 
\cite{Taylor-nl}. For instance, by $ r \in \lX^{s_0-1} S^{-1}$, we mean that for each $\xi \in \RR^n$, we have $\|r(x,\xi;t)\|_{\lX^{s_0-1}} \lesssim (1+|\xi|)^{-1}$, and that each $\xi$ derivative gains one order of decay in $\xi$ with $x$ regularity that remains in $\lX^{s_0-1}$.

The paradifferential implementation of the symbol,
\begin{equation*}
r^p(x,\xi;t) = \sum_\ell S_{<\ell-4}(D_x) r(x,\xi;t) S_\ell(\xi)
\end{equation*}
is chosen such that the action of the operator only 
involves low-high interactions of the symbol  with the function on
which it is acting.   This will allow us to prove mapping properties of
$\mathcal{R} = r^p(t,x,D)$ at any regularity. 
Since without the $\chi$ cutoff $r$ is a homogeneous symbol of order $-1$, restricting to large frequencies helps to insure invertibility of the map $S w = w + \mathcal{R} \bar w$.
With this choice for $r$, we have:

\begin{lemma}
Suppose that $w$ solves \eqref{linear-para} with $g \in \ell^1 X^{s_0}$ that is uniformly elliptic, $b,\tilde b \in \ell^1 X^{s_0-1}$ as in \eqref{gbassump} and $s_0 > d/2 +2$.  Let $r$ be given by  \eqref{r-def}.
Then the transformation $S$ in \eqref{u-to-tu} is invertible\footnote{with implicit bounds depending on $M$} in $L^2$, $\ell^1 H^s$ and $\lX^s$ for $0 \leq s \leq \infty$, and  $\tilde w$ defined by \eqref{u-to-tu} solves an equation of the form
\begin{equation} \label{linear-para1}
(i \partial_t +  \partial_k T^w_{g^{kl}}   \partial_l   + T^w_b  \cdot\nabla )  \tilde w
= \tilde f,  \ \
\tilde w(0,x) = \tilde w_0,
\end{equation}
where 
\begin{equation}
\label{fY0bd} 
\| \tilde f\|_{Y^0} \lesssim_M \|f\|_{Y^0} + \| w\|_{L^2 L^2}, \qquad \| \tilde f\|_{\ell^1 Y^s} \lesssim_M \|f\|_{\ell^1 Y^s} +  \|w\|_{l^1 L^2 H^s}.
\end{equation}

\end{lemma}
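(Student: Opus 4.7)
My plan is to verify three things in sequence: invertibility of $S$, derivation of the equation \eqref{linear-para1} for $\tilde w$ via the principal-symbol cancellation built into $r$, and the two norm bounds on $\tilde f$.

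For invertibility I use that \eqref{r-reg} gives $r \in \ell^1 X^{s_0-1}S^{-1}$ with $s_0-1 > d/2+1$, so standard paradifferential mapping properties (building on Proposition~\ref{p:XX+}) show that $\mathcal R = r^p(t,x,D)$ acts boundedly on each of $L^2$, $\ell^1 H^s$, $\ell^1 X^s$ for $s \ge 0$, gaining one derivative. The $\chi$ cutoff in \eqref{r-def} restricts $\mathcal R$ to frequencies $|\xi| \gtrsim N = N(M)$; choosing $N$ large depending on $M$ makes the operator norm of $\mathcal R$ on any of these spaces at most $1/2$. Since complex conjugation is an isometry, $S = I + \mathcal R \circ \overline{(\cdot)}$ is then invertible on each space via a Neumann series.

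Next I derive \eqref{linear-para1}. Taking the complex conjugate of \eqref{linear-para} and using that $g$ is real gives
\[
i\partial_t \bar w = \partial_j T^w_{g^{jk}}\partial_k \bar w + T^w_{\bar b}\cdot\nabla\bar w + T^w_{\overline{\tilde b}}\cdot\nabla w - \bar f.
\]
Applying $\mathcal R$, using $i\partial_t(\mathcal R\bar w) = \mathcal R(i\partial_t \bar w) + i\mathcal R_t \bar w$, and summing with \eqref{linear-para} yields exactly \eqref{nowbareq} with $e[w]$ as in \eqref{nowbarerror}. The crux is that the a priori order-one operator
\[
\mathcal A := \partial_j T^w_{g^{jk}}\partial_k \mathcal R + \mathcal R\,\partial_j T^w_{g^{jk}}\partial_k - T^w_{\tilde b}\cdot\nabla
\]
in fact has order zero thanks to \eqref{r-def}. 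By paradifferential composition each composite contributes $-g^{jk}\xi_j\xi_k\,r(x,\xi)$ to the order-one symbol, so their sum cancels the symbol of $-T^w_{\tilde b}\cdot\nabla$ outside the low-frequency support of $\chi$; the residual from $\chi$ itself is a bounded paramultiplier. Subleading errors from the Weyl composition formula involve at most two derivatives of $g$ (which lie in $L^\infty$ because $g \in \ell^1 X^{s_0} \hookrightarrow C^2$) and one derivative of $r$, producing order-zero operators with coefficients in $\ell^1 X^{s_0-2}$.

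Finally I bound $\tilde f = f - \mathcal R\bar f + \mathcal A\bar w + e[w]$. The first estimate in \eqref{fY0bd} follows from $Y^0$-boundedness of $\mathcal R$ applied to $\mathcal R\bar f$, while for $\mathcal A\bar w$ and the terms of $e[w]$, which are all order-zero paradifferential compositions applied to $w$ or $\bar w$, I use $L^2$ boundedness followed by H\"older in time and $T \le 1$, together with the embedding $L^1L^2 \hookrightarrow Y^0$, to get a $\|w\|_{L^2L^2}$ bound. The only nontrivial term is $i\mathcal R_t\bar w$, for which $r_t \in \ell^1 X^{s_0-3}S^{-1}$ from \eqref{r-reg} suffices via the $L^2_t$ structure of the $X$ norms. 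The second bound follows identically after replacing $L^2$ by $\ell^1 H^s$, using the full-regularity paradifferential bounds on $\ell^1 X^s$ for $s \ge 0$. The main obstacle throughout is the principal-symbol cancellation and keeping close track of the regularity of subleading errors from Weyl composition; the assumption $s_0 > d/2+2$ is tailored exactly to this, forcing $g \in C^2$ and $b, \tilde b \in C^1$ so that the second-derivative remainders fit into $L^\infty$-coefficient symbol classes to which the $L^2$ mapping theorems apply uniformly.
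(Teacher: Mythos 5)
Your proposal follows the same high-level route as the paper: conjugation with $S = I + \mathcal R$, symbol cancellation built into \eqref{r-def} to eliminate the $\bar w$-terms at leading order, invertibility of $S$ via smallness of $\mathcal R$ coming from the high-frequency cutoff $\chi$, and perturbative estimates for the lower-order terms. The one place where your mechanism differs is the order-zero bound for $E = \partial_j T^w_{g^{jk}}\partial_k\mathcal R + \mathcal R\partial_j T^w_{g^{jk}}\partial_k - T^w_{\tilde b}\cdot\nabla$: you invoke Weyl-composition remainders and track the regularity of the subprincipal symbol, whereas the paper localizes dyadically and reduces the bound on each block to the explicit scalar paraproduct commutator estimate
\[
\bigl\|\bigl[r_{<l-4}\,g_{<l-4}-(rg)_{<l-4}\bigr]w_l\bigr\|_{L^2} \lesssim 2^{-l}\|r\|_{C^1}\|g\|_{C^1}\|w_l\|_{L^2},
\]
after which $(rg)_{<l-4}$ is seen to cancel the high-frequency part of $T^w_{\tilde b}\cdot\nabla$. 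The paper's version is more elementary and makes transparent that only the $C^1$ bounds on $r$ and $g$ are used, while your symbolic-calculus framing requires, as an implicit step, checking that the Weyl composition expansion with its remainder bounds is valid at $C^2\times C^1$ coefficient regularity — this is true but would need spelling out. Likewise, for the invertibility you cite "standard paradifferential mapping properties" where the paper uses a separation-of-variables decomposition of $\mathcal R$ to reduce to the scalar bilinear bounds of Proposition~\ref{p:XX+}; this matters because $\mathcal R$ is a genuine $(x,\xi)$-symbol, not a plain paraproduct. None of these are gaps; both routes reach the same estimates, with the paper's being somewhat more self-contained at the stated regularity.
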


This lemma allows us to reduce the proof of Proposition~\ref{p:le-L2-lot} to the case 
when $\tilde b = 0$.

\begin{proof}
We recall the regularity of the symbol $r$ in \eqref{r-reg}.

We begin with the mapping properties of $\mathcal R$ and the invertibility of the 
renormalization operator $S$ in \eqref{u-to-tu}. It suffices to show 
that $\mathcal R$ is bounded with small norm in $L^2$, $\ell^1 H^s$ and $\lX^s$.
The proof is the same for all these spaces, so to fix the notations we consider
$\lX^s$.

Since by construction we have $r \in \lX^{s_0-1} S^{-1}$, by separating variables 
it follows that we can represent the operator $\mathcal R$ as a rapidly convergent series 
of operators of the form
\[
\mathcal R = \sum_{l > l_0} 2^{-l} \sum_{m=1}^\infty r^m_{<l-4}(t,x) S^m_l(D_x).
\]
Here the subscript indicates the frequency localizations and $m$
is the summation index for separation of variables. Choosing $S^m_l$
to be uniformly bounded, we include the rapid decay in 
 $r^m \in \lX^{s_0-1}$, which  can then  be assumed to satisfy bounds of the form  
\[
\| r^m \|_{\lX^{s_0-1}} \lesssim_M m^{-N} .
\]

The dyadic multipliers $S^k_{l}$ are bounded in all of our function spaces, so in order  
to obtain bounds for $\mathcal R$ it suffices to consider bilinear multiplicative bounds,
as given in Proposition \ref{p:XX+}.  Precisely, the bound \eqref{u_squared+}
shows that 
\[
\| r^m_{<l-4}(t,x) S^m_l(D_x) u\|_{\lX^s} \lesssim \|r^m\|_{\lX^{s_0-1}} \| S_l^m u\|_{\lX^s},
\]
which after summation in $m$ and also in the dyadic index $l$ yields
\begin{equation}\label{R-small}
\| \mathcal R  u\|_{\lX^s} \lesssim_M 2^{-l_0} \|u\|_{\lX^s}.
\end{equation}
Here smallness is gained by making $l_0$ large enough, which in turn is accomplished through the choice of the cutoff $\chi$ in the definition of the symbol $r$. If we instead use the $2^{-l}$
factor to gain Sobolev regularity rather than smallness, then we obtain
\begin{equation}\label{R-Sobolev}
\| \mathcal R  w\|_{\lX^s} \lesssim_M  \| w \|_{\lX^{s-1}},
\end{equation}
as well as the related fixed time $L^2$ type bounds
\begin{equation}\label{R-Sobolev-XY}
\| \mathcal R  w\|_{H^\sigma} \lesssim_M   \| w \|_{H^{\sigma -1}}, \qquad \sigma \in \R.
\end{equation}
We similarly have the closely related fixed time bound for $\mathcal R_t$,
\begin{equation}\label{Rt-Sobolev-XY}
\| \mathcal R_t  w\|_{L^2} \lesssim_M   \|w\|_{L^2}
\end{equation}
as $r_t \in \lX^{s_0-3} S^{-1} \subset C^{-1+} S^{-1}$ (provided that $s_0 > d/2+2$).

In view of the bound \eqref{R-small}, the operator $S$ is invertible on $\lX^s$ for $0 \leq s \leq \infty$.  A similar analysis holds for $L^2$ and $\ell^1 H^s$.

Now we consider the source term bound \eqref{fY0bd}. 
From \eqref{nowbareq}, \eqref{nowbarerror} we have \eqref{linear-para1} with
\[
\tilde{f} = f - \mathcal{R}\bar f + E \bar w + e[w]
\] 
where
\begin{align*}
E = & \ \p_j T^w_{g^{jk}}  \p_k \mathcal{R}  + \mathcal{R} \p_j T^w_{g^{jk}}  \p_k -  T^w_{\tilde  b}\cdot \nabla ,
\\
e[w] = & \ 
T^w_b\cdot\nabla\mathcal{R}\bar{w} +\mathcal{R}T^w_{\bar{b}}\cdot\nabla \bar{w} + \mathcal{R}T^w_{\bar{\tilde{b}}}\cdot\nabla w + i \mathcal{R}_t \bar{w}.
\end{align*}
We need to establish the appropriate bounds for each of the terms on the right.

For $\mathcal R \bar f$ it suffices to see that $\mathcal R$ is bounded on $Y^0$, respectively $\lY^s$,
\begin{align*}
\| \mathcal R \bar{f} \|_{Y^0} \lesssim_{M} \| f \|_{Y^{0}},
\qquad \| \mathcal R \bar{f} \|_{\lY^s} \lesssim_{M} \| f \|_{\lY^{s}}.
\end{align*}
This follows in the same way as \eqref{R-small}, but using the high-low frequency interactions we have the straightforward estimate
\begin{equation}
    \label{e:XYtoY}
    \| S_{< \ell-4} u S_\ell v \|_{Y^0} \lesssim \| u \|_{L^\infty} \| S_\ell v \|_{Y^0}\lesssim \| u \|_{\lX^{s_0}} \| S_\ell v \|_{Y^0}. 
\end{equation}
instead of Proposition~\ref{p:XX+}. A similar computation applies with $Y^0$ replaced by $\lY^s$.

For the remaining terms in $\tilde f$, it suffices
to prove $L^2$ type bounds, namely
\begin{equation}\label{e-at0}
\| E \bar w\|_{L^2 L^2} + \| e[w]\|_{L^2 L^2} \lesssim_M \|w\|_{L^2 L^2},
\end{equation}
respectively
\begin{equation}\label{e-ats}
\| E \bar w\|_{l^1 L^2 H^s} + \| e[w]\|_{l^1 L^2 H^s} \lesssim_M \|w\|_{l^1 L^2 H^s}.
\end{equation}
Since all operators involved are paradifferential,
the analysis happens at fixed frequency so the two bounds are virtually identical. Thus we will focus 
on \eqref{e-at0}.

After a dyadic Littlewood-Paley localization, 
the bound for $E$ in \eqref{e-at0}
reduces to a paraproduct type 
product formula for scalar multiplications
\[
\| [r_{<l-4} g_{< l-4} - (rg)_{<l-4}] w_l \|_{L^2} \lesssim 2^{-l} \|r\|_{C^1} \|g\|_{C^1} \|w_l\|_{L^2} \lesssim  \|r\|_{\lX^{s_0-1}} \| g\|_{\lX^{s_0}} \| w_l \|_{L^2}.
\]
Here, the two terms $(rg)_{<l-4}$ cancel the high frequency contributions of $T^w_{\tilde  b}\cdot \nabla w $ and the remaining low frequency bound on $\chi T^w_{\tilde  b}\cdot \nabla w $ is easily bounded.

Finally, we consider the bound for $e[w]$ in \eqref{e-at0}.  For the $\mathcal R$ terms in $e[w]$ it suffices to use the bound in \eqref{R-Sobolev-XY},
while for the $\mathcal R_t$ term in $e[w]$ we need the bound in \eqref{Rt-Sobolev-XY}.
\end{proof}

\subsection{The incoming estimate}
\label{incoming}

To motivate the definitions that follow we briefly consider the constant coefficient case,
where $g = I_d$. Then the Hamilton flow associated to the linear constant coefficient Schr\"odinger 
equation has the form
\[
\dot x = 2 \xi.
\]
Hence rays are straight lines, which approach the origin when $x \cdot \xi < 0$, or, in other words, 
as long as the angle $\theta = \angle(x,\xi)$ satisfies $ \cos \theta < 0$.  

In the problem we are considering, the coefficients are not constant
but are small in the exterior region, and, as seen in
Section~\ref{nontrap} the direction for the bicharacteristics does not
deviate much from being constant.  Thus we can define the incoming
part of a solution $w$ using a pseudodifferential truncation,
\[
\win = P_{in}(x,D) w,
\]
where the symbol of $P_{in}$ is chosen as
\[
p_{in}(x,\xi) = \chi_{in}(\cos \theta) \chi_{>5R}(|x|) .
\]
Here $\chi_{in}$  is a nonincreasing cutoff which selects the interval $[-\infty,-\frac38)$, and $\chi_{>5R}$ 
is nondecreasing and selects the exterior of $5B_R$. 
Then the aim of this section is to establish the bound
\begin{equation}\label{w-inc}
\| \chi_{< 100 R} \win \|_{ X^0 } \lesssim  \| w_{0} \|_{L^2}  + \| f \|_{Y^0} +  C(M) \| w \|_{ L^2L^2} .
\end{equation}
We remark here that one could remove the  $\chi_{< 100 R}$ truncation, as well as the $M$ dependence in the constant in the last term. But this would require extra work, and the result is not needed in our sequence of steps.

As a first minor simplification, we can use the small data results of
\cite{MMT3} (see Theorem~\ref{t:le-small} above) to reduce the problem to the case when the portion of $f$ outside of $B_{3R}$ need only be measured in $L^2L^2$:
\begin{equation} \label{better-f}
\chi_{>3R} f \in L^2 L^2.
\end{equation}

By truncating the
coefficients $g-I_d$ and $b$ outside $B_R$,
\[
g_{ext} - I_d = \chi_{> R}(g - I_d), \qquad b_{ext} = \chi_{> R} b  \]
 we obtain coefficients $g_{ext} - I_d, b_{ext}$ which are small overall in the norms of \eqref{gbassump}
and coincide with $g-I_d, b$ in the exterior region.  Then we solve the auxiliary problem
\begin{equation} \label{paralinear-loc}
(i \partial_t + \partial_k T^w_{g_{ext}^{kl}}  \partial_l   + T^w_{b_{ext}}  \cdot\nabla) \tilde w = f,  \ \
\tilde w(0) = 0.
\end{equation}
By Theorem~\ref{t:le-small}, $\tilde w$ satisfies a global favorable estimate,
\begin{equation}\label{smallletilde}
\| \tilde w\|_{X^0} \lesssim \|f\|_{Y^0}.
\end{equation}
On the other hand, for the difference 
\[
w_1 = w - \chi_{>2R} \tilde w
\]
we have the equation
\[
(i \partial_t + \partial_k T^w_{g^{kl}}  \partial_l   + T^w_b  \cdot\nabla) w_1 = (1-\chi_{>2R}) f+ f_1 + f_2 + f_3
\]
where $f_1$ arises from the change of metric,
\[
f_1 = - ( \partial_k T_{g^{kl} - g_{ext}^{kl}}   \partial_l   + T_{b- b_{ext}}  \cdot\nabla) \chi_{>2R} \tilde w 
\]
and $f_2$, $f_3$ from the localization of $\tilde w$ to the exterior region,
\[
f_2 = (1-\chi_{>3R}) [ \chi_{>2R}, (\partial_k T_{ g_{ext}^{kl}}   \partial_l   + T_{b_{ext}}  \cdot\nabla)] (1-\chi_{> 3R}) \tilde w .
\]
\[
f_3 = \chi_{>3R} [ \chi_{>2R}, (\partial_k T_{ g_{ext}^{kl}}   \partial_l   + T_{b_{ext}}  \cdot\nabla)] (1-\chi_{> 3R}) \tilde w 
+  [ \chi_{>2R}, (\partial_k T_{ g_{ext}^{kl}}   \partial_l   + T_{b_{ext}}  \cdot\nabla)] \chi_{> 3R} \tilde w .
\]

It is easily seen that $f_1$ is a Schwartz function in $x$ since the functions $\chi_{>2R}$  and 
$g^{kl} - g_{ext}^{kl}$, $b- b_{ext}$ have separated localizations. In particular we have
\begin{equation}\label{f1}
\|f_1\|_{L^2} \lesssim \|\tilde{w}\|_{L^2L^2}.
\end{equation}
A similar bound holds for $f_3$ due to the separated
localizations of $\chi_{> 3R}$ and $\nabla \chi_{>2R}$. In particular we have
\begin{equation}\label{f3}
\|f_3\|_{L^2} \lesssim \|\tilde{w}\|_{L^2L^2}.
\end{equation}
The function $f_2$, on the other hand, is localized 
and satisfies
\begin{equation}\label{f2}\|f_2\|_{Y^0} \lesssim \|\chi_{<4R} \tilde{w}\|_{L^2H^{1/2}} 
\end{equation}
to which we may subsequently apply \eqref{smallletilde}. This completes our reduction to the case when \eqref{better-f} holds.

Now we return to the estimate for $\win$, under the additional assumption \eqref{better-f} on $f$.
The main step in the proof of this bound is based on the positive
commutator method, using a well chosen order zero formally self-adjoint
pseudodifferential multiplier $Q_{in} \in OPS^0$ with symbol  supported
in the incoming region. 

Writing the second order part of the Schr\"odinger operator in divergence form,
we will use the notations
\[
\partial_k T^w_{g^{kl}}  \partial_l + T^w_{b^j} \partial_j = P + B
\]
where the principal part $P$ is  self-adjoint.
To understand the choice of $Q_{in}$, which we take to be a self-adjoint operator of order zero that is independent of $t$, we first compute formally using
the $L^2$ inner product in $\R^d$:  
\[
\begin{split}
\frac{d}{dt}  \Re \langle Q_{in} w, w \rangle = \ & 
 i \langle  Q_{in} (P+B) w, w \rangle - i  \langle  Q_{in}w,  (P+B) w \rangle + 2 \Re  \langle Q_{in} w, - i f \rangle
\\
= & \  \langle i ([Q_{in},P] + Q_{in} B -  B^* Q_{in}) w, w \rangle  + 2 \Re  \langle Q_{in} w, - i f \rangle
\\
: = & \ - \langle C w, w \rangle    + 2 \Re  \langle Q_{in}w, - i f \rangle  .
\end{split}
\]
Here the operator $C$ is an order one 
self-adjoint pseudodifferential
operator. 
Integrating between $0$ and $T$ with $T \leq 1$ we obtain
\begin{equation}\label{com-relation}
 \Re \langle Q_{in} w, w \rangle (T) +  \int_0^T \langle C w, w \rangle dt =   \Re \langle Q_{in} w, w \rangle (0) +
2 \int_0^T \Re  \langle Q_{in}w, -i f \rangle dt .
\end{equation}
The first term on the right is easily estimated in terms of the $L^2$ norm of the data,
\[
\Re \langle Q_{in} w, w \rangle (0) \leq \| w(0)\|_{L^2}^2.
\]
For the second term on the right we can take advantage of \eqref{better-f} and use that we shall take $Q_{in}$ to be supported\footnote{ Here we harmlessly gloss over the difference between the support of the symbol and that of the kernel; this can be readily rectified
with an $OPS^{-\infty}$ adjustment to $Q_{in}$.} where $|x|>4R$ to obtain
\[
 \Re  \langle Q_{in}w, i f \rangle   \lesssim \| w\|_{L^2}
  (\| \chi_{>3R} f\|_{L^2} + \|  \chi_{<4R} f\|_{Y^0}),
 \]
where the interior $Y^0$ bound arises to account for smoothing tails.

Our goal therefore is to choose the operator $Q_{in}$ favorably so that we can prove a good bound from below
for the left hand side of \eqref{com-relation}. This is easily done for the first term, where it suffices to impose the conditions
\begin{equation}\label{q-pos}
q(x,\xi) \geq 0, \qquad q(x,\xi) \gtrsim 1 \ \ \text{ in } \{ \cos \theta <- \frac14, \ |x| > 4R \}.
\end{equation}
Then G{\aa}rding's inequality shows that 
\begin{equation}\label{QatT} 
\Re \langle Q_{in} w, w \rangle (T) \gtrsim \| \win(T)\|_{L^2}^2 - C \|w(T)\|_{H^{-\frac12}}^2,
\end{equation}
where the last term can be further estimated in a naive fashion
by the energy type relation  
\begin{equation}\label{lot-bound}
\|w(T)\|_{H^{-\frac12}}^2 \lesssim  \|w(0)\|_{H^{-\frac12}}^2  + \| f\|_{L^2H^{-1}}^2
+ C(M) \| w\|_{L^2 L^2}^2.  
\end{equation}
Indeed, \eqref{lot-bound} results from considering $2\Im\la (i\partial_t + \partial_k T^w_{g^{kl}}\partial_l + T^w_b\cdot\nabla)w, \la D\ra^{-1} w\ra$.  Note, in this estimate there will be implicit dependence upon $\|b\|_{L^\infty}$ and hence on our large constant $M$.

We next consider the time integral involving the operator $C$. For
this we will seek to carefully apply G{\aa}rding's inequality in order to
prove the bound
\begin{equation}\label{C-all}
 \int_0^T \langle C w, w \rangle dt \gtrsim \| \chi_{< 100 R} \win \|_{X^0}^2 - \| w\|_{L^2 L^2}^2 .
\end{equation}
Together with \eqref{QatT}, this would complete the proof of the desired estimate \eqref{w-inc}.
The rest of this section is concerned with the choice of $Q_{in}$ so that the above estimate holds.

Within the ball $B_{100 R}$, the $X$ norm is equivalent to the $L^2 H^{\frac12}$ norm, therefore at least heuristically 
the principal symbol $c_0(x,\xi)$ of $C$ should satisfy 
\begin{equation}
c_0(x,\xi) \geq 0, \qquad c_0(x,\xi) \gtrsim |\xi|  \text{ in } \{ \cos \theta < - \frac14, \ 100 R > |x| > 4R \}.
\end{equation}
However, the matters are a bit more delicate because of the coefficients $g$ and $b$ which have limited regularity.

We write $C$ in the form
\[
C =  i [\Delta,Q_{in}] +    ( i[(P-\Delta),Q_{in}]+i Q_{in} B - i  B^* Q_{in}) := C^{main} + C^{err}
\]
and begin with computing the principal symbol of $C$, which is 
\[
\begin{split}
c_0(x,\xi;t) 
= & -\  2 \xi  \cdot \partial_{x} q_{in}(x,\xi)
   - \{ (g^{ij}-\delta^{ij}) \xi_i \xi_j, q_{in}(x,\xi)\}    - 2 \Im  b(t,x,\xi) q_{in}(x,\xi)
\\
:= & \ c_0^{main} + c_0^{err}.
\end{split}
\] 
Note that, as $g,b$ depend upon both $x$ and $t$, $c_0$ also has some time dependence.  However, as we will only rely upon the spatial regularity of those terms for the analysis of this operator, we will drop the explicit reference to the $t$ dependence below for convenience.
We will choose $q_{in}$ so that the first term yields a bounded nonnegative  contribution
that also controls the remaining terms. 

Precisely, our assumption \eqref{smallassump} for the coefficients $g-I_d$ and $b$ 
of our Schr\"odinger operator guarantees that we have a uniform pointwise bound
\[
|g-I_d| + |\nabla g| + |b|  \lesssim \epsilon \mu^2_k, \qquad R \leq k <|x| < k+1,
\]
where the sequence $\{\mu_k\}$ is square summable, 
\[
\sum \mu_k^2 \lesssim 1, \qquad \mu_R \approx 1.
\]
Without any restriction in generality we also assume that the sequence $\mu_k$ is slowly varying. Then,
we can find a increasing  function 
\[
\rho_R :[R,\infty) \to [1,2]
\]
so that 
\[
\rho'_R(r) \gtrsim \mu_k^2, \qquad \rho_R^{(j)}(r) \lesssim_j \mu_k^2, \  j \geq 2, \qquad r \in [k,k+1].
\]
Using this weight $\rho_R$, we define the symbol $q_{in}$ as
\[
q_{in}(x,\xi) = \rho_R(r) \chi_{> 5R}(r) \chi_{in}(\cos \theta - c \rho_{R}(r)),
\]
where $\chi_{in}(\rho)$ is nonincreasing, supported in $\{\rho < -1/4\}$, and identically $1$ on $\{\rho<-1/2\}$.  The constant $c$ is small and satisfies
$\epsilon \ll c \ll 1$.
With this choice, it suffices to prove the following:

\begin{lemma}
With $q_{in}$ chosen as above, the estimate \eqref{C-all} holds.
\end{lemma}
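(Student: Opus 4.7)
The plan is to split $C = C^{main} + C^{err}$ as suggested and to show that the full principal symbol $c_0 = c_0^{main} + c_0^{err}$ is nonnegative with a quantitative lower bound on the region in phase space where the incoming projector $P_{in}$ is active and $|x| < 100R$. Once this is established, I would invoke a sharp G{\aa}rding inequality, which is available because the hypothesis $s_0 > d/2 + 2$ implies $g, b \in C^2$. The resulting $L^2$ error from G{\aa}rding is lower order and will be absorbed into the $\|w\|_{L^2 L^2}^2$ term on the right of \eqref{C-all}. Finally, using the equivalence of $X^0$ with $L^2 H^{1/2}$ on the bounded set $\{|x| < 100R\}$, the quadratic form bound will translate into the desired local energy control.

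\textbf{Computing $c_0^{main}$.} Using $\xi \cdot \nabla_x r = |\xi| \cos\theta$ and $\xi \cdot \nabla_x \cos\theta = |\xi|\sin^2\theta/|x|$, I would expand
\[
c_0^{main} = -2\xi\cdot\nabla_x q_{in}
\]
as a sum of four terms, coming respectively from differentiating $\rho_R(r)$, $\chi_{>5R}(r)$, the $r$-argument of $\chi_{in}(\cos\theta - c\rho_R(r))$, and its $\cos\theta$-argument. On the support of $\chi_{in}$ we have $\cos\theta \leq -\tfrac14 + c\rho_R \leq -\tfrac14 + 2c$, which is strictly negative for $c \ll 1$, while $\chi_{in}' \leq 0$ and $\rho_R', \chi_{>5R}' \geq 0$. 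Each of the four terms is therefore nonnegative. The first term yields a bulk contribution $\gtrsim \rho_R'(r)|\xi||\cos\theta|\chi_{>5R}\chi_{in} \gtrsim \mu_k^2 |\xi| \chi_{>5R}\chi_{in}$ on $\{k \leq |x| < k+1\}$, while the fourth, which is $-2\chi_{in}' \frac{|\xi|\sin^2\theta}{|x|} \rho_R \chi_{>5R}$, dominates on the transition region of $\chi_{in}$, where $\sin^2\theta \gtrsim 1$. Together these two contributions give a pointwise lower bound of the form $c_0^{main} \gtrsim \mu_k^2 |\xi| + |\chi_{in}'||\xi|/|x|$ on $\supp q_{in}$.

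\textbf{Absorbing $c_0^{err}$.} Expanding the Poisson bracket $\{(g-I)\xi^2, q_{in}\}$ into a sum of a $\partial_\xi \cdot \partial_x$ piece and a $\partial_x \cdot \partial_\xi$ piece, and using the uniform pointwise smallness $|g - I| + |\nabla g| + |b| \lesssim \epsilon \mu_k^2$ on $\{k \leq |x| < k+1\}$ together with the bounds $|\partial_x q_{in}| \lesssim \mu_k^2 + |\chi_{in}'|/|x|$ and $|\partial_\xi q_{in}| \lesssim |\chi_{in}'|/|\xi|$, I would estimate
\[
|c_0^{err}| \lesssim \epsilon \mu_k^2 |\xi| + \epsilon \frac{|\chi_{in}'||\xi|}{|x|} + \epsilon \mu_k^2,
\]
the last term being order zero and hence negligible at high frequency. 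Since $\epsilon \ll c \ll 1$, each piece is strictly controlled by the matching piece in the lower bound for $c_0^{main}$, giving $c_0 \geq \tfrac12 c_0^{main} \geq 0$ pointwise. Restricted to $\{|x| < 100R\}$, where $\rho_R \leq 2$, this yields $c_0(x,\xi) \gtrsim_R |\xi|\, p_{in}(x,\xi)$ up to a symbol supported away from $\supp p_{in}$.

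\textbf{Conclusion via G{\aa}rding and main obstacle.} Invoking the sharp G{\aa}rding inequality for the nonnegative symbol $c_0$ with $C^2$ coefficients yields
\[
\la Cw, w\ra \geq -C(M)\|w\|_{L^2}^2,
\]
and using instead the quantitative lower bound from the previous step on $\{|x| < 100R\}$ converts this into
\[
\la Cw, w\ra \gtrsim_R \| \chi_{< 100 R} |D|^{1/2} \win\|_{L^2}^2 - C(M)\|w\|_{L^2}^2.
\]
Integrating in $t \in [0,T]$ and using the equivalence of $X^0$ and $L^2 H^{1/2}$ on the bounded set, \eqref{C-all} follows with the $L^2 L^2$ error as stated. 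The main obstacle is in Step 2: the $\chi_{in}$ transition region generates singular contributions weighted by $1/|x|$ which appear in both $c_0^{main}$ and $c_0^{err}$, and the role of the carefully chosen slowly-varying weight $\rho_R$ inside the argument of $\chi_{in}$ is precisely to produce the useful boundary term with the correct sign, so that the matching pieces of the Poisson-bracket error can be absorbed without spoiling the lower bound.
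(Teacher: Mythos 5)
Your symbol computations for $c_0^{main}$ and the absorption of $c_0^{err}$ using the smallness \eqref{smallassump} are in agreement with the paper, and your pointwise conclusion $c_0 \geq \tfrac12 c_0^{main} \geq 0$ is correct.

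The gap is in the final step, where you pass from the pointwise lower bound on the symbol to the quadratic-form bound by ``invoking the sharp G{\aa}rding inequality for the nonnegative symbol $c_0$ with $C^2$ coefficients.'' The coefficients of $c_0$ are \emph{not} $C^2$: the error piece $c_0^{err}$ involves $\nabla_x g$ (from the Poisson bracket with $g$) and $b$, and under the standing hypothesis $s_0 > \tfrac{d}{2}+2$ one only has $g \in C^{2,\delta}$, hence $\nabla g \in C^{1,\delta}$, and likewise $b \in \ell^1 X^{s_0-1} \hookrightarrow C^{1,\delta}$. So $c_0^{err}$, and therefore $c_0 = c_0^{main}+c_0^{err}$, lies in $C^1\cdot S^1$. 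For an order-one symbol of only $C^1$ regularity the sharp G{\aa}rding inequality does not directly apply with an $O(\|w\|_{L^2}^2)$ error; this is precisely the obstruction the paper flags, citing \cite{TataruSumsSquares}. Your argument implicitly relies on a version of G{\aa}rding that is unavailable here.

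The paper's route around this is to make the pointwise domination operator-theoretic: decompose $c_0^{main} = \phi_1^2 + \phi_2^2 + \phi_3^2$ into squares of smooth nonnegative symbols $\phi_j \in S^{1/2}$ (plus a fourth auxiliary $\phi_4$), and then express $c_0^{err} = \sum_j d_j \phi_j^2$ with $d_j \in \epsilon\, C^1 S^0$. Since the $\phi_j$ are smooth, each $\Phi_j(x,D)$ is a legitimate order-$\tfrac12$ operator, and the rough factors $d_j$ are handled as bounded $L^2$ multipliers between $\Phi_j^*$ and $\Phi_j$, giving $\langle C^{err}w,w\rangle \lesssim \epsilon\bigl(\sum_j \|\Phi_j w\|_{L^2}^2 + \|w\|_{L^2}^2\bigr)$. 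The classical G{\aa}rding inequality is then only applied to the smooth operator $C^{main}$. Without a step of this kind your proposal does not close.
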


\begin{proof}
The leading part $c_0^{main}$ of $c_0$ is given by 
\begin{multline*}
 \frac{1}{2}c_0^{main}(x,\xi) 
 = 
 - |\xi| \cos\theta \chi_{in}(\cos \theta - c \rho_{R}(r)) ( \rho_R \chi_{> 5R})'(r)\\- \chi_{in}'(\cos \theta - c \rho_{R}(r) )  \frac{|\xi|}{r} \sin^2 \theta   ( \rho_R \chi_{> 5R})(r) + c\chi_{in}'(\cos \theta - c \rho_{R}(r) ) |\xi| \cos \theta  (\rho_R \chi_{> 5R})(r) \rho_R'(r)  .
\end{multline*}
Provided $c$ is sufficiently small, all three terms are nonnegative, and we get the size
\begin{align*}
& c_0^{main}(x,\xi)  \approx  |\xi|  \chi_{in}(\cos \theta - c \rho_{R}(r)) ( \rho_R \chi_{> 5R})'(r) - \chi_{in}'(\cos \theta - c \rho_{R}(r))  \frac{|\xi|}{r}  ( \rho_R \chi_{> 5R})(r) \\
& \hspace{3cm} - c\chi_{in}'(\cos \theta - c \rho_{R}(r)) |\xi|  (\rho_R \chi_{> 5R})(r) \rho_R'(r) .
\end{align*}
On the other hand, for the remaining terms in $c_0$ 
we have a favorable bound
\begin{equation}\label{c0-err}
|c_0^{err}| \lesssim \epsilon  \left(  |\xi| \chi_{in}(\cos \theta - c \rho_{R}) ( \rho_R \chi_{> 5R})'(r) - \rho_R'(r)\chi_{in}'(\cos \theta - c \rho_{R})  |\xi|  ( \rho_R \chi_{> 5R})(r) \right).
\end{equation}

It is easier to argue in the case of $C^{main}$, which belongs to $OPS^1$. Since its principal symbol 
is nonnegative and of size $\xi$ within the region
\[
\{ 4R < |x| < 100 R, \  \cos \theta < -\frac12 \},
\]
by the classical G{\aa}rding inequality for $S^1$ symbols we have the fixed time bound
\begin{equation}\label{c-main}
\langle C^{main} w, w \rangle  = \langle (c_0^{main})^w(x,D) w, w \rangle + O(\|w\|_{L^2}^2) \gtrsim
\| \chi_{<100 R} w_{in} \|^2_{H^\frac12} - C \| w\|_{L^2}^2.
\end{equation}

The similar bound for $C^{err}$ is slightly more delicate. We will show that it
satisfies the fixed time bound 
\begin{equation}\label{c-err}
\langle C^{err} w, w \rangle \lesssim \epsilon (\langle (c^{main}_0)^w(x,D) w, w \rangle + \|w\|_{L^2}^2).
\end{equation}
If we have this, then combining the bounds \eqref{c-main} and \eqref{c-err}
we obtain
\[
\langle C w, w \rangle  \gtrsim  \langle (c_0^{main})^w(x,D) w, w \rangle - C \|w\|_{L^2}^2 \gtrsim
\| \chi_{<100 R} w_{in} \|^2_{H^\frac12} - C \| w\|_{L^2}^2,
\]
which after time integration yields \eqref{C-all} and in turn gives \eqref{w-inc}. It remains to prove 
\eqref{c-err}.

The difficulty here is that the symbol $c^{err} \in C^1\cdot S^1$
since it involves spatial coefficients depending upon $b$ and the
Poisson bracket with coefficients in $g$ (and hence involves
derivatives of  $g$), which means we only have bounds in $X^{s_0 - 1}$ and cannot guarantee enough regularity 
to allow us to directly use G{\aa}rding's inequality, see \cite{TataruSumsSquares}. Instead we will make a more careful symbol analysis.
We first reconsider $c^{main}_0$, for which we write a sum of squares decomposition, 
\[
c^{main}_0 = \phi_1^2 + \phi_2^2 + \phi_3^2,
\]
where $\phi_1, \phi_2, \phi_3 \in S^{\frac12}$ are smooth nonnegative symbols given by 
\begin{align*}
 \phi_1^2 &=  -|\xi| \cos\theta \chi_{in}(\cos \theta - c \rho_{R}(r)) ( \rho_R \chi_{> 5R})'(r),\\ 
 \phi_2^2 &= - \chi_{in}'(\cos \theta - c \rho_{R}(r))  \frac{|\xi|}{r} \sin^2 \theta   ( \rho_R \chi_{> 5R})(r) , \\
 \phi_3^2 &=   c |\xi| \cos \theta \chi_{in}'(\cos \theta - c \rho_{R}(r))  (\rho_R \chi_{> 5R})(r) \rho_R'(r).
\end{align*}
We also consider a fourth nonnegative symbol $\phi_4 \in S^\frac12$ given by 
\[
\phi_4^2 = |\xi| \rho'_R(r) q_{in}(x,\xi) .
\]

By pseudodifferential calculus we have the fixed time bound 
\[
\langle C^{main} w, w \rangle = \| \Phi_1(x,D) w\|_{L^2}^2 + \| \Phi_2(x,D) w\|_{L^2}^2  + \| \Phi_3(x,D) w\|_{L^2}^2 + O(\| w\|_{L^2}^2).
\]
On the other hand, 
\[
\phi_4^2 \lesssim \phi_1^2 + \phi_2^2 + \phi_3^2.
\] 
Therefore by G{\aa}rding's inequality we have
\[
 \| \Phi_4(x,D) w\|_{L^2}^2  \lesssim \| \Phi_1(x,D) w\|_{L^2}^2 + \| \Phi_2(x,D) w\|_{L^2}^2  + \| \Phi_3(x,D) w\|_{L^2}^2 + O(\| w\|_{L^2}^2).
\]
Now we consider the symbol $c^{err} \in C^1\cdot S^1$. Modulo an $L^2$ bounded $C^0\cdot S^0$ component 
we can replace it with its principal part $c^{err}_0$. Given its expression, it is easily seen that 
we can use the above $\phi_j$'s to represent the principal part $c^{err}_0$ in the form
\[
c^{err}_0 = \sum_{j = 1}^4 d_j(x,\xi) \phi_j^2(x,\xi), \qquad d_j \in \epsilon C^1S^0,
\]
which is a more careful substitute for \eqref{c0-err}.
Then at the operator level
we can write
\[
c^{err}_0(x,D) = \sum_{j = 1}^4  \Phi_j(x,D)^*   D_j(x,D) \Phi_j(x,D) + \epsilon OPC^0S^0,
\]
which yields the bound
\[
\langle C^{err} w, w \rangle \lesssim \sum_{j=1}^4 \| \Phi_j(x,D) w\|_{L^2}^2 + \| w \|_{L^2}^2.
\]
Thus \eqref{c-err} follows, and the proof is complete.
\end{proof}

\subsection{Nontrapping estimates on $B(0,R)$} 
\label{compact}

Here we use the nontrapping condition to produce a high frequency bound for
 $w$ within the compact set $B(0,R)$, in terms of the incoming part $\win$ estimated in the previous subsection. 
 Precisely, we will show that
\begin{equation}\label{w-comp}
\| \chi_{<R} w \|_{X^0} \lesssim  e^{C(M) L} \left( \| w_0 \|_{L^2}  + \|\chi_{<100R} f \|_{Y^0} + 
\| \chi_{<100R} \win\|_{X^0} +  \|w\|_{L^2 L^2}\right).
\end{equation}

To clarify the meaning of the norms in \eqref{w-comp}, we recall that within a compact set (e,g. $100B_R$)
the $X^0$ norm is equivalent to the $L^2_t H^\frac12$ norm, while the $Y^0$ norm is equivalent to the 
$L^2 H^{-\frac12}$  norm. 

This proof also uses a positive commutator argument, based on
propagation of singularities in $B(0,100 R)$.  The key idea is that
any bicharacteristic ray which enters $2B_R$ is coming from the
phase space support of $\chi_{<100R} \win$.  From a purely qualitative
perspective, the nontrapping condition implies that such an estimate
must hold, with a suitable implicit constant. The challenge is to
carefully track the constant.

For the positive commutator argument we use again a nonnegative
pseudodifferential multiplier $Q_{comp} \in OPS^0$, whose symbol is
this time supported in $100B(0,R)$, and repeat the computation leading
to \eqref{com-relation}. We split the analysis into an ode part, where
we construct the symbol for $Q_{comp}$, and a microlocal part, where
we use the properties of symbol in order to prove the desired estimate
\eqref{w-comp}. We begin with some heuristic considerations.

\begin{itemize}
\item The size of the frequency $\xi$ may vary considerably along the
  Hamilton flow of the operator $g^{ij} \xi_i \xi_j$. To avoid difficulties arising from this, we
  will take the symbol $q_{comp}$ to be homogeneous in $\xi$ for
  $|\xi| \gg 1$.

\item The time $t$ also varies along the Hamilton flow, which would
  seem to require a time dependent construction of $q_{comp}$.
  However, the propagation speed is proportional to the frequency size
  $|\xi|$, therefore in the high frequency limit the time is constant
  along the flow. Because of this, we will only use the fixed time
  flow in the $q$ construction.  
\end{itemize}

To construct $q_{comp}$ we will only use the principal symbol for the
Schr\"odinger operator,
\[
a(x,\xi) = g^{ij} \xi_i \xi_j
\]
and
\[
H_a = \sum_{j=1}^d \partial_{\xi_j} a \partial_{x_j} - \partial_{x_j} a \partial_{\xi_j}
\]
its Hamiltonian vector field.  We summarize our result as follows:

 \begin{prop}\label{Doi_construction_pert}
   Assume that the coefficients $g^{ij}$ satisfy the conditions in
   \eqref{gbassump}, \eqref{smallassump}.  Moreover, we assume that
   the flow \eqref{geodesic1} permits no trapped geodesics,
   with the longest within $B_{2R}$ with $|\xi| = 1$ being of length $L$.  Let $C \gg 1$ be a large
   universal constant.

Then there exists a smooth, homogeneous, real-valued, nonnegative 
symbol $ q \in S^0$ in the region $|x| \leq 100 R$ with the following properties:
\begin{enumerate}
\item Support property: 
\[
\supp q \cap \{|x|>50R\} \subset \{ \cos \theta < - \frac12 \}.
\]
\item Size:
\begin{equation}
| \partial_t^j \partial_x^\alpha \partial_\xi^\beta q(x,\xi)| \lesssim e^{CM L} |\xi|^{-|\beta|}.
\end{equation}

\item Positive commutator:
\begin{equation}
- H_{a} q  \geq CM |\xi|  q 
\end{equation}
and 
\begin{equation}
- H_{a} q  \geq |\xi|  \qquad \text{inside } \  \{  |x| < 2 R \}
\end{equation}
\item Bounded gradient,
\begin{equation}
 |q_x| + |\xi| |q_\xi|  \lesssim e^{CML} H_{a} q  .
\end{equation}
\end{enumerate}
\end{prop}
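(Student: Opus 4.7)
The plan is a Doi-type escape function construction reduced to the projected Hamilton flow \eqref{geodesic1} on the cosphere bundle. Since $a$ is homogeneous of degree two in $\xi$, I would seek $q$ homogeneous of degree zero for $|\xi|\gtrsim 1$, so that $H_a q$ is of order one in $\xi$ and the underlying flow to track is the projected flow, reparametrized by Euclidean arc length $s$ with $\frac{ds}{dt}\approx |\xi|$ by uniform ellipticity. A symbol of the form $e^{-\lambda\phi(x,\xi)}$, where $\phi$ grows at unit rate in $s$ along the flow, then satisfies $H_a q \approx -\lambda|\xi| q$, which is the correct scaling for property (3).

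The construction has two pieces. Inside $2B_R$, I would use nontrapping to define $\phi(x,\xi)$ as the Euclidean arc length since the projected bicharacteristic through $(x,\xi)$ last entered $2B_R$; by \eqref{Lassump}, $\phi \in [0,L]$. Then set
\[ q(x,\xi) := \Psi(x,\xi)\, e^{\lambda L - \lambda \phi(x,\xi)}, \qquad \lambda = CM, \]
where $\Psi$ is a smooth cutoff supported in $\{|x|<100R\}$, equal to $1$ on $\{|x|<50R\}$, and additionally restricted to $\{\cos\theta<-1/2\}$ on $\{|x|>50R\}$ in order to enforce property (1). Outside $2B_R$ but within $\supp\Psi$, I would extend $\phi$ by continuing the arc length along the projected flow: for outgoing segments just exited from $2B_R$, $\phi$ keeps increasing; for incoming segments that have not yet entered $2B_R$, set $\phi\equiv 0$, so that $q$ attains its maximum $e^{\lambda L}$ there. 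Finally, I would mollify $\phi$ on scale $|\xi|^{-1}$ to repair its Lipschitz-only regularity near $\partial(2B_R)$.

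For verification: properties (1) and (2) are built into the ansatz, since the normalization yields $q \leq e^{\lambda L} = e^{CML}$, while the derivatives of $\phi$ are controlled via the flow Jacobian. Property (3) follows from $H_a(e^{-\lambda\phi}) = -\lambda(ds/dt) e^{-\lambda\phi}$, giving $-H_a q \gtrsim CM|\xi| q$; since $q\geq 1$ in $2B_R$ by the $e^{\lambda L}$ prefactor, one also gets $-H_a q \geq |\xi|$ there. For (4), inside $2B_R$ we have $-H_a q \gtrsim CM|\xi|$ and $|q_x|+|\xi||q_\xi| \lesssim \lambda e^{CML}$, whose ratio is bounded by $e^{CML}$ once $|\xi|\gtrsim 1$. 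The main obstacle will be bounding the derivatives of $\phi$ itself: the sensitivity of the arc-length-to-entry function to perturbations of $(x,\xi)$ is governed by the Jacobian of the projected flow, which by a Gr\"onwall argument on the linearized flow (using $\|g\|_{C^2}\lesssim 1+M^2$) grows like $e^{C(M)L}$ over arc length $L$. This exponential growth is precisely what forces the $e^{CML}$ factors in (2) and (4), and closing all bookkeeping consistently will require choosing $C$ large enough that the subexponential errors from mollification, from the transition across $\partial(2B_R)$, and from the derivatives of $\Psi$ and the metric are all absorbed into $e^{CML}$.
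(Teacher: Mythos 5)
Your high-level strategy is the right one (a Doi-type exponential escape function driven by the projected flow, with Gr\"onwall controlling the $e^{C(M)L}$ loss), but the specific ansatz $q=\Psi\, e^{\lambda L-\lambda\phi}$ with $\phi$ taken to be ``arc length since the trajectory last entered $2B_R$'' does not satisfy the global positive commutator inequality, and the paper takes a different route precisely to avoid this. The global estimate $-H_a q\geq CM|\xi|\,q$ has to hold throughout $\supp q$, not just inside $2B_R$. But on your incoming region you set $\phi\equiv 0$, so wherever $\Psi\equiv 1$ (for instance $\{2R<|x|<50R\}$, where $\Psi$ has no angular restriction), $q=e^{\lambda L}$ is constant along the flow, $H_a q=0$, and the inequality fails strictly since $q>0$ there. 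The same failure occurs for trajectories that pass through $B_{50R}$ with impact parameter in $(2R,50R)$ and never meet $2B_R$; along such a trajectory your $\phi$ is identically zero on both the incoming and outgoing halves, so $H_aq$ vanishes on the whole piece inside $\{|x|<50R\}$. The paper sidesteps this by \emph{not} building $q$ as an explicit exponential of an entry-time function; instead it takes a smooth, homogeneous, incoming source $\chi$ (covering $2B_R$ and supported in the correct angular sector) and defines $q$ as the solution of the transport ODE $-\tilde H_{a_\lambda} q = CM\,q + \chi$ along the \emph{regularized} flow, with data forcing $\supp q\subset\supp\chi$. With this definition $-H_{a_\lambda} q = CM q + \chi \ge CM q$ holds by construction everywhere, including on the incoming side where $q$ is still small, because the nonnegative source $\chi$ supplies the needed positivity rather than relying on $\phi$ already increasing there.

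A second, related gap is the regularity of your $\phi$. The ``arc length since entering $2B_R$'' function is only Lipschitz and has $O(1)$ kinks across $\partial(2B_R)$ and genuine singularities at grazing rays, where a tiny change in $(x,\xi)$ switches a trajectory between entering and missing $2B_R$. Mollifying $\phi$ on scale $|\xi|^{-1}$ does not address this: $q$ is 0-homogeneous, so the relevant mollification is at a \emph{fixed} scale on the cosphere bundle (not $\xi$-dependent), and after mollification you would still need to re-verify that $-H_aq \gtrsim CM|\xi| q$ survives, which is not automatic because $H_a$ hits the mollified kink. The paper's order of operations is different and is essential: it first regularizes the metric $g\mapsto g_\lambda$ at spatial scale $\lambda^{-1}$ (and temporal scale $\lambda^{-2}$) with $\lambda\approx e^{CML}$, shows the $\tilde H_{a_\lambda}$ and $\tilde H_a$ flows stay $e^{-CML}$-close inside $100B_R$, then solves the ODE for $q$ exactly along the regularized flow so that $q$ is automatically smooth with derivatives losing only powers of $\lambda$, proves the key pointwise lower bound $-\tilde H_{a_\lambda}q \gtrsim e^{-CML}|\nabla q|$ by a comparison argument along the flow, and only at the end transfers from $a_\lambda$ back to $a$ using the flow closeness. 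This ordering cleanly yields all four properties including the gradient bound (4), whereas in your scheme (4) and (3) are entangled with the nonsmoothness of $\phi$ at the entry boundary. To repair your proposal you would essentially have to replace the hard cutoff $\partial(2B_R)$ by a smooth incoming symbol playing the role of the paper's $\chi$, and replace the explicit exponential ansatz by the transport ODE with that source; at that point you have reproduced the paper's construction.
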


\begin{proof}
From the conditions \eqref{gbassump} we have the following regularity properties for $g$ in a compact set:
\[
\| g\|_{C^{2,\delta}}  + \| g_t\|_{C^\delta} \lesssim M \qquad \text{in } B(0,R)
\]
respectively 
\[
\| g\|_{C^{2,\delta}}  + \| g_t\|_{C^\delta} \lesssim \epsilon \qquad \text{outside } B(0,R).
\]
We will only use these properties in the proof of the proposition.

This would be a standard construction for smooth $g$; similar
constructions have already been done in \cite{StTa} as well as in
Proposition $3.6$ in \cite{MMT2}.  The difficulty we encounter in the
nonsmooth case is that a direct construction based on the Hamilton
flow of $a$ will yield a nonsmooth $q$. There are two possible
strategies here, to regularize $g$ and then construct $q$ or
vice versa; both work, but we choose the former.

Since we seek $q$ homogeneous, we will work on the cosphere bundle
$S^* \R^d = \{(x,\xi); |\xi| = 1\}$ using the notion of flow in \eqref{geodesic1}.  

We begin by regularizing $g$. Given a frequency scale $\lambda$, to be
chosen later, we regularize $g$ to $g_\lambda$ at scale $\lambda$ in
$x$ and at scale $\lambda^2$ in $t$. Then our uniform bounds above
imply that
\[
|g -g_\lambda| \lesssim \lambda^{-2}, \qquad |\nabla g - \nabla g_\lambda| \lesssim \lambda^{-1}.
\]
Next we compare their Hamilton flows $\Phi(s;x_0,\xi_0)$, respectively $\Phi_\lambda(s;x_0,\xi_0)$
starting at any point  $(x_0,\xi_0)$ with $x_0 \in 100B_R$:

\begin{lemma}
Assume that 
\[
\lambda \geq e^{2CML}, \qquad C \gg 1.
\]
Then the flows of $\tilde H_a$ and $\tilde H_{a_\lambda}$, defined as in \eqref{geodesic1}, stay close 
\[
|\Phi(s;x_0,\xi_0) - \Phi_\lambda(s;x_0,\xi_0)| \lesssim e^{-C ML}
\]
until exiting $100 B_R$.
\end{lemma}
\begin{proof}
Recall from \eqref{geodesic1}, that the flow $\tilde H_a$ can be written
 \begin{equation}\label{geodesic1_vf}
\tilde H_a = \sum_{j=1}^d \partial_{\xi_j} a \partial_{x_j} - (\partial_{x_j} a - (\nabla a \cdot \xi)  \xi_j ) \partial_{\xi_j}.
  \end{equation}
This then follows directly from a small modification of the arguments in Section \ref{nontrap}, Proposition \ref{p:nontrap2}.  
\end{proof}

In particular this shows that
for such $\lambda$ the $\tilde H_{a_\lambda}$ flow is also uniformly
nontrapping, with comparable parameters and hence we may choose $L$ such that it is a bound on the longest geodesic for both flows within $B_R$.

The first element of our construction is  a smooth $0$ homogeneous
nonnegative symbol $\chi$ in $\{ |x| < 100 R\}$ that is incoming
relative to the flat metric, i.e. $a_0 = |\xi|^2$.  This is chosen akin
to the incoming localization in the previous subsection, namely
$q_{in}$, but shifted so that its support includes $B(0,2R)$, i.e. for
instance
\[
\chi(x,\xi) = \chi_{>2R} (| x - 8R \xi|)  \chi_{<-1/2} (\cos\angle(x-8R\xi,\xi)), \qquad |\xi| =1.
\]
The three important properties of this symbol are as follows:

\begin{itemize}
\item Incoming relative to $a_0$,
\begin{equation}\label{chi-dchi}
- H_{a_0} \chi \gtrsim |\nabla \chi| \text{ in } 100B_R.
\end{equation}

\item Covers $2B_R$, 
\[
\chi \gtrsim 1 \quad \text{ in } 2B_R.
\]

\item  Narrower than the previous incoming multiplier $q_{in}$,
\[
\supp \chi \cap \{ |x| > 50 R \} \subset \{ \cos \theta < - \frac12\}.
\]
\end{itemize}

We use $\chi$ to construct  our smooth nonnegative symbol $q$,
by solving the ODE
\[
- \tilde H_{a_\lambda}  q = CM  q + \chi
\]
with initial data set by the condition 
\[
\supp  q \subset \supp \chi.
\]
In other words, we solve the ode backwards along the $\tilde H_{a_\lambda}$ flow,
beginning when $\chi$ is first encountered.  Note that here $q$ will depend upon $t$ since $a$ does.  Since $a$ is of class
$C^2$, the Hamilton flow of $\tilde H_{a_\lambda}$ is well-posed. Since the $C^2$ norm
of $a_\lambda$ is bounded by $M$ and the longest trajectory has length $L$
(within $100 B_R$), by Gr\"onwall it follows that
\[
|q| + |\nabla q| \lesssim e^{CML}
\]
for $x \in 100 B_R$.
We can also estimate higher regularity for $q$, losing only powers of $\lambda$ when differentiating
in $x,\xi$, respectively $\lambda^2$ for time derivatives. 
Thus, if $\lambda$ is chosen so that $\lambda \approx e^{CML}$, then
 $q$ already has all the properties in the Proposition \ref{Doi_construction_pert}, but relative to $a_\lambda$.
In order to switch from $a_\lambda$ to $a$ we need one more piece of information,
namely that 
\begin{equation}\label{q-dq}
- \tilde H_{a_\lambda}  q \gtrsim e^{-CML} |\nabla  q|.
\end{equation}
To prove \eqref{q-dq}, we compute the ODEs for the two quantities,
\[
- \tilde H_{a_\lambda} (\nabla  q) = CM \nabla  q + O(|\nabla^2 a|) \nabla  q + \nabla \chi
\]
whereas 
\[
- \tilde H_{a_\lambda} \tilde H_{a_\lambda}  q = CM  \tilde H_ {a_\lambda} q - \tilde H_{a_\lambda} \chi  
\]
and \eqref{q-dq} is easily seen to follow by the comparison principle for ODE's in view of \eqref{chi-dchi}.

Now we proceed to the final step of our construction, which is to
replace $a_\lambda$ by $a$.  We have
\[
| \tilde H_{a_\lambda} q - \tilde H_{a} q| \lesssim |\nabla a - \nabla a_\lambda| |\nabla q| \lesssim M \lambda^{-1} 
e^{CML} |\nabla q| \lesssim - e^{-CML}  \tilde H_{a_\lambda} q
\]
which suffices. 
\end{proof}

We define the symbol $q_{comp}$ by fine tuning the symbol $q$ constructed above in the exterior region.
First we consider a symbol $\tilde \chi$ which is akin to $\chi$ but with slightly larger support,
so that 
\[
\tilde \chi \gtrsim 1 \qquad \text{ in }  \supp q.
\]
As in the previous subsection, we have $- H_{a_0} \tilde \chi \gtrsim \chi$, and also the sums of squares 
representation 
\[
- H_{a_0} \tilde \chi = \phi_1^2 +\phi_2^2, \qquad \chi = \phi_3^2
\]
for smooth nonnegative symbols $\phi_j \in S^\frac12$.

Now we define 
\[
\tilde q = q + \tilde \chi, \qquad q_{comp} = \chi_{<75R}(|x|) \tilde q.
\]
Here the symbol $\tilde q$ inherits from $q$ all the properties in Proposition~\ref{Doi_construction_pert}, and we note that $\tilde q$ and $q_{comp}$ depend upon $t$ through $q$ and hence through $a$ as above.
This is because outside $2B_{R}$ the symbol $\tilde \chi$ has similar properties, while inside 
$2B_{R}$ the contribution of the symbol $\tilde \chi$ is small compared to that of $q$. The reason 
we introduce $\tilde q$ is related to the low regularity of the coefficients, which causes us to once again replace 
G{\aa}rding's inequality with more robust sums of squares methods. Precisely, we will have a representation of $\tilde q$ and $\nabla \tilde q$ in terms of the above squares,
\[
\tilde q = d_3 \phi_3^2, \qquad (q_x, |\xi| q_\xi) = d_1  \phi_1^2 + d_2 \phi_2^2
\]
with $d_j \in e^{CML} S^0$. The bound on $d_j$'s is exponentially large, but that will suffice later on. One consequence
of it is that
\begin{equation} \label{amu-below}
- H_{a_\mu} q - CM q  \gtrsim e^{-CML} \sum \phi_j^2, \qquad \mu \geq \lambda.
\end{equation}

On the other hand the cutoff $\chi_{<75R}(|x|)$ achieves the goal of having $q_{comp}$ compactly supported, at the price 
of violating the positive commutator condition (3) in Proposition~\ref{Doi_construction_pert} in the region 
$\{ 50 R < |x| < 100 R\}$.

We now show that  the above  choice for the symbol $q_{comp}$  yields the bound \eqref{w-comp}.
We start with the counterpart of \eqref{com-relation}, namely 
\begin{multline}\label{com-relation-R}
 \Re \langle Q_{comp} w, w \rangle (T) +  \int_0^T \langle C w, w \rangle dt =   \Re \langle Q_{comp} w, w \rangle (0) \\+
2 \Re \int_{0}^T  \langle Q_{comp}w, -i f \rangle  
+ \langle Q_{comp,t}w, w \rangle  \, dt
\end{multline}
where $C$ is now given by 
\[
C =  i[P,Q_{comp}]+i Q_{comp} B - i  B^* Q_{comp}.
\]
The terms on the right are easily bounded as follows:
\[
 \Re \langle Q_{comp} w, w \rangle (0) \lesssim e^{CML} \| w(0)\|_{L^2}^2,
\]
respectively 
\[
\Re \int_{0}^T  \langle Q_{comp,t}w, w \rangle  \, dt
\lesssim  e^{CML} \| w\|_{L^2L^2}^2
\]
and
\[
\int_0^T \langle Q_{comp}w, -i f \rangle  \, dt \lesssim \| Q_{comp} w\|_{L^2H^\frac12} \| \chi_{<100R} f\|_{L^2 H^{-\frac12}}
+ T^\frac12 \| w\|_{L^\infty L^2} \| f  \|_{Y^0}, 
\]
where the second term on the right accounts for the smoothing, rapidly
decreasing tails arising from the contribution of $\chi_{>100R} f$. 
The first term on the left
is estimated by G{\aa}rding's inequality, 
\[
 \langle Q_{comp} w, w \rangle (T)  \gtrsim  \| \chi_{<2 R} w (T)\|_{L^2}^2 -   e^{CML} \| w(T)\|_{H^{-1/2}}^2 
\]
where the right hand side is further estimated by \eqref{lot-bound}.

It remains to consider the contribution of $C$, where we again have the
difficulty of having to deal with low regularity coefficients. In
order to deal with this, we first truncate the coefficients at the
scale $\mu = 2^{\kappa_0} > \lambda$, where $\lambda = e^{CML}$ 
is the scale used earlier in the proof of the Proposition. 
Then we consider the
the contribution of $P_\mu$ and $B_\mu$. This is given by 
\[
C_\mu =  i[P_\mu,Q_{comp}]+i Q_{comp} B_\mu - i  B^*_\mu Q_{comp}.
\]
Then $C_\mu \in OPC^1S^1 \cap \mu OPC^2S^1$,
with principal symbol
\[
c_{\mu,0} = - \{ a_\mu, q_{comp} \} + 2q_{comp} \Im b_\mu .
\]
By the above proposition, this satisfies the bound from below
\[
|\xi|^{-1} c_{\mu,0} (x,\xi) \geq c \chi_{<100 R}(|x|) \chi(x,\xi) 
 - e^{CML} \chi_{<100 R}  p_{in}(x,\xi).
\]
Then G{\aa}rding's inequality yields the fixed time bound (see \cite{TataruSumsSquares})
\[
 \langle C_{\mu} w, w \rangle \gtrsim c \| \chi_{<2 R} w\|_{H^\frac12}^2 
- e^{CML}  \| \chi_{<100 R} \win\|_{H^\frac12}^2 - \mu e^{CML} \| w\|_{L^2 L^2}^2.
\]
Here the $\mu$ factor in the last term arises due to the fact that the
$C^2S^1$ symbol regularity is needed for G{\aa}rding's inequality. For later use, we record
another consequence of G{\aa}rding's inequality. Precisely, by \eqref{amu-below}
we get
\[
c_{\mu,0} q   \gtrsim e^{-CML} \sum \phi_j^2
\]
which gives 
\begin{equation}\label{Cmu-below}
 \langle C_{\mu} w, w \rangle \gtrsim  e^{-CML} \sum \|\Phi_j u\|^2_{L^2}  - \mu e^{CML} \| w\|_{L^2}^2.
\end{equation}

It remains to consider the contribution of $C - C_\mu$, for which it suffices 
to prove the bound 
\begin{equation}\label{dcmu}
\langle (C-C_\mu) w, w \rangle \lesssim \mu^{-\delta} e^{CML}(  \langle C_{\mu} w, w \rangle + 
\| \chi_{<100 R} \win\|_{H^\frac12}^2) + \mu  e^{CML} \| w\|_{L^2}^2.
\end{equation}
This suffices provided that $\mu$ is large enough, $\mu = e^{C_1 ML}$ with $C_1 \gg C$.

We first directly  compute the regularity
\[
C - C_\mu \in \mu^{-\delta} OPC^1S^1,
\]
which shows that only the principal symbol of $C-C_\mu$ matters.

We then use the squares representation for $q$ and $\nabla q$ to write
\[
(c - c_\mu)_0 = \sum e_j \phi_j^2 + e_0 |\xi|  (\chi_{<100R} p_{in})^2, \qquad e_j \in \mu^{-\delta} e^{CML} C^1S^0,
\]
which implies the bound
\[
\langle (C-C_\mu) w, w \rangle \lesssim \mu^{-\delta} e^{CML} \left ( \sum \|\Phi_j w\|^2_{L^2} + 
\| \chi_{<100R} w_{in} \|^2_{H^\frac12} + \| w\|_{L^2}^2 \right) .
\]
Combining this with \eqref{Cmu-below} yields  \eqref{dcmu} and completes the proof of \eqref{w-comp}.


\subsection{ The high frequency local energy decay bound}
\label{gluing}

Our objective here is to use  the results from Sections \ref{small}, \ref{incoming} and \ref{compact} in order 
to complete the proof of the high frequency local energy decay bound for the paradifferential equation
in Proposition~\ref{p:le-L2-lot}.

Combining the bounds in \eqref{w-inc} and \eqref{w-comp} we obtain a local energy decay bound 
\begin{equation}\label{le-loc}
\| \chi_{< 4R} w \|_{L^2 H^\frac12} \lesssim e^{C(M)L}  
(\|w_0\|_{L^2} + \|f\|_{Y^0} +  \| w\|_{L^2 L^2}).
\end{equation}
It remains to estimate the exterior part of $w$. For that we simply truncate $w$, setting 
\[
w_{ext} = \chi_{> 2R} w.
\] 
Then we write the paradifferential equation for $w_{ext}$, but using the truncated coefficients $g_{ext}$
and $b_{ext}$. This takes the form
\begin{eqnarray}
\label{lin1_hf}
\left\{ \begin{array}{l}
(i \partial_t + \partial_k T^w_{g_{ext}^{kl}}   \partial_l   + T^w_{b_{ext}}  \cdot\nabla) w_{ext} = f_{ext}   \\ \\
w_{ext} (0) = \chi_{> 2R} w_{0},
\end{array} \right.
\end{eqnarray}
where 
\[
f_{ext} =  \chi_{> 2R} f + [(\partial_k T^w_{g^{kl}_{ext}}   \partial_l   + T^w_{b_{ext}}  \cdot\nabla), \chi_{> 2R}] w + (\partial_k T^w_{g^{kl}_{ext}}\partial_l - \partial_k T^w_{g^{kl}}\partial_l)w_{ext}
+(T^w_{b_{ext}}\cdot\nabla - T^w_{b}\cdot \nabla)w_{ext}.
\]
Using \eqref{Ydef} and the disjointness of the supports of $w_{ext}$ and $g_{ext}-g$, $b_{ext}-b$, it follows that
\[\|(\partial_k T^w_{g^{kl}_{ext}}\partial_l - \partial_k T^w_{g^{kl}}\partial_l)w_{ext}
+(T^w_{b_{ext}}\cdot\nabla - T^w_{b}\cdot \nabla)w_{ext}\|_{Y^0} \lesssim \|w\|_{L^2L^2}.\]
Moreover, 
\[\|[(\partial_k T^w_{g^{kl}_{ext}}   \partial_l   + T^w_{b_{ext}}  \cdot\nabla), \chi_{> 2R}] w\|_{Y^0} \lesssim \|\chi_{<4R} w\|_{L^2H^{1/2}} + \|w\|_{L^2L^2}.\]
Thus we can apply the small data result  \eqref{e-L2alt} to bound $w_{ext}$,
\begin{equation}\label{le-loc1}
\| w_{ext} \|_{X^0} \lesssim_M 
\|w_0\|_{L^2} + \|f\|_{Y^0} + \| w\|_{L^2 L^2} + \|\chi_{<4R} w\|_{L^2H^{1/2}}.
\end{equation}
Combined with \eqref{le-loc}, this yields the conclusion of Proposition~\ref{p:le-L2-lot} for the case of the paradifferential equation.

\subsection{ Higher regularity bounds for the paradifferential equation}
Here we  extend the $L^2$ high frequency bounds \eqref{e-L2+lot} for the paradifferential equation \eqref{linear-para}
 to $H^s$ and $\ell^1 H^s$. We begin by proving the $\ell^1 L^2$ bound,
\begin{equation}\label{e-L1+lot} 
\| w\|_{\ell^1 X^0} \lesssim   e^{C(M) L} ( \|w_0\|_{\ell^1 L^2} + \| f \|_{\ell^1 Y^0} + 
 \| w\|_{\ell^1L^2 L^2 }).
\end{equation}
Here the $\ell^1$ norms control the $\ell^2$ norms, so by \eqref{e-L2+lot} we can bound
$\| w\|_{X^0}$ by the right side of \eqref{e-L1+lot}.  
  This suffices to establish \eqref{e-L1+lot} in $B_{4R}$, and it remains to consider the exterior part of $w$.  Here we apply \eqref{small-l1} to $w_{ext}$, which solves \eqref{lin1_hf}.  Arguing in a fashion analogous to that in the preceding subsection yields \eqref{e-L1+lot}.

Next we consider \eqref{le-sigma-noT}. By
interpolation, it suffices to consider the case when $\sigma$ is a
positive integer. But this case can be obtained simply by
differentiating the paradifferential equation and applying \eqref{e-L1+lot} to
$\partial^\sigma w$. Here we note that $\partial^\sigma w$ can be viewed as a vector valued function, which solves 
a system which is diagonal at leading order but coupled through the first order terms. This makes no difference, as 
the proof of \eqref{e-L1+lot} equally applies to this case without any
changes.  

Finally we prove
\eqref{le-l1-sigma-noT}. Here we apply the same reasoning as in the
previous paragraph but starting with \eqref{e-L2+lot} instead of
\eqref{e-L1+lot}.


\subsection{ Bounds for the original  equation}

Here we transfer the high frequency bounds from the paradifferential equation \eqref{linear-para} to the 
original equation \eqref{linear}, and prove Proposiition~\ref{p:le-L2-lot} (which immediately implies Theorem~\ref{t:le-L2}),
as well as the bounds \eqref{le-sigma-noT} and \eqref{le-l1-sigma-noT} (which immediately imply Corollary~\ref{lincor1}).

We begin with \eqref{le-sigma-noT} and \eqref{le-l1-sigma-noT}.  This requires bounds for the operator 
\[
E = (g^{kl} - T^w_{g^{kl}}) \partial_k \partial_l + (b^j - T^w_{b^j}) \partial_j
\]
namely
\begin{equation}
\| E w\|_{l^2 Y^\sigma} \lesssim_M e^{- CML} \| w\|_{l^2 X^\sigma} +  e^{CML} \| w\|_{l^2 L^2 H^\sigma} ,
\end{equation}
\begin{equation}
\| E w\|_{\ell^1 Y^\sigma} \lesssim_M e^{-CML} \| w\|_{\ell^1 X^\sigma} + e^{CML} \| w\|_{\ell^1 L^2 H^\sigma} .
\end{equation}

Using a Littlewood-Paley decomposition we identify two distinct cases:

\begin{description}
\item[(i)] High $\times$ high $\to$ low interactions. If the high frequency is $k \gg ML$ then we use \eqref{xxy2+},
gaining a factor of $2^{-\tilde \delta k}$. Else we bound the $\ell^1 X^\sigma$ norm by the $\ell^1 L^2 H^\sigma$ norm,
losing an $  e^{CML} $ factor.  Here $\tilde \delta = s-s_0$.

\item[(ii)] High $\times$ low $\to$ high interactions, where we can still apply the same strategy as above.
 If the low frequency is $k \gg ML$ then we use \eqref{xxy2+}, gaining a factor of $2^{-\delta k}$. 
Else we bound the $\ell^1 X^\sigma$ norm by the $\ell^1 H^\sigma$ norm,
losing an $  e^{CML} $ factor.

\end{description}

\section{Proof of 
Theorem \ref{thm:main1}}
\label{sec:proof}

We recall that the equation \eqref{eqn:quasiquad} turns into an
equation of the form \eqref{eqn:quasiquad1} by differentiation. Hence
it suffices to prove part (b) of the theorem.  Thus, we are working with the equation
\begin{equation}
\label{eqn:quasiquad1-re1}
\left\{ \begin{array}{l}
i u_t + \p_j g^{jk} (u,\bar u)  \p_ku = 
F(u,\bar u,\nabla u,\nabla \bar u) , \ u:
\RR \times \RR^d \to \CC^m \\ \\
u(0,x) = u_0 (x)
\end{array} \right. 
\end{equation}
which we rewrite in the paradifferential form
\begin{equation}
\label{eqn:quasi-para2}
\left\{ \begin{array}{l}
i \partial_t u + \p_j T^w_{g^{jk}}  \p_k u + T^w_{b^j} \partial_j  u + 
T^w_{\tilde b^j} \partial_j \bar u =  G
 \\ \\
u(0,x) = u_0 (x).
\end{array} \right. 
\end{equation}
Here the nonlinearity $G = G(u,\bar u, \nabla u, \nabla \bar u)$ is no longer purely algebraic, as it involves 
frequency localizations. However,  $G$ plays a perturbative role, due to the estimates in Section~\ref{sec:G}.

\subsection{ The iteration scheme}

To construct a local solution $u$ for our nonlinear equation \eqref{eqn:quasiquad1-re1} we introduce 
an iterative scheme as follows:

\begin{itemize}
\item Our starting point is the function $u^{(0)} = 0$. 

\item The iteration step is as follows. Given $u^{(n)}$, we construct $u^{(n+1)}$ as the solution to the 
linear paradifferential equation
\begin{equation}
\label{eqn:quasi-para3}
\left\{ \begin{array}{l}
(i \partial_t u^{(n+1)} + \p_j T^w_{g^{jl} (u^{(n)}) } \p_l u^{(n+1)} + T^w_{b^j(u^{(n)})} \partial_j  u^{(n+1)} + 
T^w_{\tilde b^j(u^{(n)})} \partial_j \bar u^{(n+1)} =  G(u^{(n)})
 \\ \\
u^{(n+1)}(0) = u_0.
\end{array} \right. 
\end{equation}
\end{itemize}

A priori we do not even know whether the sequence $u^{(n)}$ is well
defined for all $n$.  Even if it is defined locally in time, we do not
know whether their lifespans are uniformly bounded from below away
from zero. Thus, our objectives will be, in order, as follows:

\begin{itemize}
\item Establish uniform bounds for the sequence $u^{(n)}$ on a fixed time interval $[0,T]$ 
\emph{ not depending on $n$. }

\item Prove convergence for the sequence $u^{(n)}$.
\end{itemize}

To achieve this we first need to establish the main parameters which will be used to control the sequence
$u^{(n)}$, which should depend only on the initial data $u_0$. A priori we know that $u_0 \in \ell^1 H^s$.
We introduce a second Sobolev index $s_0$ so that 
\[
\frac{d}2 + 2 < s_0 < s.
\]
Then we consider the coefficients $g(u_0)$, $b(u_0)$ and $\tilde b(u_0)$ in the linearized equation, and measure 
them at both regularities, denoting 
\begin{equation}
M_s = \| u_0 \|_{\ell^1 H^s}, \qquad M =   \| u_0 \|_{\ell^1 H^{s_0}}.
 \end{equation}
The next step is to choose a large enough ball $B_R$ of radius $R$ so that $u_0$ is small outside $B_R$,
\begin{equation}\label{choose-R}
\| u_0 \|_{\ell^1 H^{s_0} (B_R^c)} \leq \epsilon \ll 1.
\end{equation}
Here $\epsilon$ is a small universal constant.

Finally, the metric $g(u_0)$ is nontrapping. Then we denote by $L$ the length of the longest bicharacteristic 
for $g(u_0)$, measured on the cosphere bundle, from the entry to the exit from $2B_R$. 

Given the initial data parameters, $M,R,L$ and $M_s$ we seek to use them in order to 
uniformly describe the  sequence $u^{(n)}$:

\begin{prop}\label{p:it-uniform}
Assume that the time $T$ is small enough,
\begin{equation}\label{ub-T1}
T \ll_{M_s} e^{-C(M) L} .
\end{equation}
Then the sequence $u^{(n)}$ is well defined in $[0,T]$ for all $n$ and satisfies the following uniform
properties:
\begin{enumerate}
\item Uniform $H^s$ bounds: 
\begin{equation}\label{ub-hs}
\| u^{(n)} \|_{\ell^1 X^{s} [0,T] } \leq e^{C(M) L} M_s.
\end{equation}
\item Uniform $H^{s_0}$ bounds:
\begin{equation}\label{ub-hs0}
\| u^{(n)} \|_{\ell^1 X^{s_0}[0,T] } \leq  2M.
\end{equation}

\item Uniform exterior size:
\begin{equation}\label{ub-ext}
\| u^{(n)} \|_{\ell^1 X^{s_0} ([0,T] \times B_R^c )} \leq  2\epsilon.
\end{equation}

\item  Uniform nontrapping: 
\begin{equation}\label{ub-L}
L(u^{(n)}) \leq 2L.
\end{equation}
\end{enumerate}
\end{prop}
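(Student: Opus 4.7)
I proceed by induction on $n$, with trivial base case $u^{(0)}=0$. For the inductive step, I assume (1)--(4) hold for $u^{(n)}$. These hypotheses, combined with the Moser-type bounds of Proposition~\ref{p:XX+} applied to the coefficients $g(u^{(n)})$, $b(u^{(n)},\nabla u^{(n)})$, and $\tilde b(u^{(n)},\nabla u^{(n)})$, ensure that the linear paradifferential operator in \eqref{eqn:quasi-para3} fits the framework of Section~\ref{sec:LED1} with comparable parameters $M,R,L$. Hence Corollary~\ref{lincor1} produces $u^{(n+1)}$ on $[0,T]$ together with
\[
\|u^{(n+1)}\|_{\ell^1 X^\sigma} \lesssim e^{C(M)L}\bigl(\|u_0\|_{\ell^1 H^\sigma} + \|G(u^{(n)})\|_{\ell^1 Y^\sigma}\bigr), \qquad \sigma \ge 0.
\]

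The high-regularity bound (1) comes from applying this estimate at $\sigma=s$ and invoking Proposition~\ref{p:G} to bound $\|G(u^{(n)})\|_{\ell^1 Y^s} \lesssim T^\delta C(M)\|u^{(n)}\|_{\ell^1 X^s}$; combined with hypothesis (1) for $u^{(n)}$, the $T^\delta$ factor absorbs the excess $e^{C(M)L}$ provided $T$ is chosen as in \eqref{ub-T1}. Closing with a mild redefinition of the exponential constant $C(M)$ then gives (1) for $u^{(n+1)}$.

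The bounds (2) and (3) are the main obstacle, since a direct application of the linear estimate at $\sigma=s_0$ yields only $e^{C(M)L}M$, which is far larger than $2M$. Following point 7 of Section~\ref{outline}, the strategy is to interpolate between (1) and a crude low-regularity bound
\[
\|u^{(n+1)} - u_0\|_{\ell^1 L^\infty L^2} \lesssim T \cdot C(M,M_s),
\]
obtained by writing $u^{(n+1)}(t) - u_0 = \int_0^t \partial_t u^{(n+1)}\,ds$, solving \eqref{eqn:quasi-para3} for $\partial_t u^{(n+1)}$, and crudely estimating the elliptic and first-order terms via the $\ell^1 X^s$ bound on $u^{(n+1)}$ provided by (1) together with Proposition~\ref{p:G} for the source. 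Littlewood-Paley interpolation against (1) then yields
\[
\|u^{(n+1)} - u_0\|_{\ell^1 X^{s_0}} \lesssim T^\theta\, C(M,M_s)^\theta\, \bigl(e^{C(M)L} M_s\bigr)^{1-\theta}, \qquad \theta = \frac{s-s_0}{s} \in (0,1),
\]
which is $o(1)$ for $T$ satisfying \eqref{ub-T1}. Combined with $\|u_0\|_{\ell^1 X^{s_0}}\le M$, this proves (2); a spatially localized version, together with the initial smallness \eqref{choose-R} outside $B_R$ and the small-data result of Theorem~\ref{t:le-small} in the exterior region, gives (3).

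Finally, (4) follows from Proposition~\ref{p:nontrap2}: by shrinking $T$ further within the range permitted by \eqref{ub-T1}, the interpolation also yields $\|u^{(n+1)}(t) - u_0\|_{\ell^1 X^{s_0}} \le e^{-C_0(M)L}$ for all $t\in[0,T]$, so the metric $g(u^{(n+1)}(t))$ remains uniformly nontrapping with $L(u^{(n+1)}(t)) \le 2L$. The most delicate point of the whole argument is executing the crude low-regularity bound without invoking (2) for $u^{(n+1)}$ itself; this non-circularity is exactly what forces the polynomial dependence of $T$ on $M_s$ referred to after \eqref{ub-T1}.
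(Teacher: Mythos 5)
Your proposal follows essentially the same route as the paper: prove (1) by the linear estimate in Corollary~\ref{lincor1} plus the $T^\delta$-gain bound on $G$ from Proposition~\ref{p:G} and the inductive hypothesis; then derive (2) by interpolating the just-proved high-regularity bound against a crude low-regularity bound carrying a positive power of $T$; (3) follows from (2) and the choice of $R$ in \eqref{choose-R}, and (4) from Proposition~\ref{p:nontrap2} once the interpolated bound is made smaller than $e^{-C_0(M)L}$. Your only substantive deviation is that you phrase the low-regularity bound on the difference $u^{(n+1)}-u_0$ in $\ell^1 L^\infty L^2$ with a factor $T$, whereas the paper writes the bound on $\|u^{(n+1)}\|_{\ell^1 X^0}$ with a factor $\sqrt{T}$ and then interpolates in the $\ell^1 X^\sigma$ scale directly; your variant is actually the one envisioned in the outline (Section~\ref{outline}, step 7) and is, if anything, slightly cleaner since the $L^\infty L^2$ part of the $X$-norm does not vanish as $T\to 0$ unless one subtracts $u_0$. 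Your interpolation exponent $\theta=(s-s_0)/s$ and the resulting smallness threshold for $T$ match the paper's \eqref{u-s0}, modulo the mild frequency-cutoff constraint $2^N T\lesssim 1$ needed to control the local-smoothing piece of the $X^{s_0}$ norm by the $L^\infty L^2$ endpoint, which is compatible with \eqref{ub-T1}.
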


Once we have the uniform bounds on the iterations, the next goal is to prove convergence 
in a weaker topology:

\begin{prop}\label{p:it-converge}
Assume that the time $T$ is small enough,
\begin{equation}
T \ll_{M_s} e^{-C(M) L} .
\end{equation}
Then the sequence $u^{(n)}$ converges in $\lX^{\sigma}$
for $0 \leq  \sigma < s_0-1$. 
\end{prop}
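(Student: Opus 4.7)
The plan is to prove convergence of $(u^{(n)})$ by establishing a contraction-type estimate for the differences $v^{(n)} := u^{(n+1)} - u^{(n)}$ in $\ell^1 X^\sigma$. Subtracting the iteration equations \eqref{eqn:quasi-para3} at levels $n$ and $n-1$ shows that $v^{(n)}$ satisfies a linear paradifferential equation with coefficients $g(u^{(n)}), b(u^{(n)}), \tilde{b}(u^{(n)})$, forced by two source terms,
\[
G(u^{(n)}) - G(u^{(n-1)}), \qquad \text{and} \qquad \bigl[ \partial_j T^w_{g^{jl}(u^{(n-1)}) - g^{jl}(u^{(n)})} \partial_l + T^w_{b(u^{(n-1)}) - b(u^{(n)})}\cdot\nabla + T^w_{\tilde{b}(u^{(n-1)}) - \tilde{b}(u^{(n)})}\cdot\nabla \bar{\,}\,\bigr] u^{(n)},
\]
with vanishing initial data $v^{(n)}(0) = 0$. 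By Proposition~\ref{p:it-uniform}, the coefficients at level $n$ satisfy the hypotheses of Corollary~\ref{lincor1} uniformly with parameters $M, R, L$, so the linear estimate \eqref{le-l1-sigma} on $[0,T]$ yields
\[
\|v^{(n)}\|_{\ell^1 X^\sigma[0,T]} \lesssim e^{C(M)L}\Bigl( \|G(u^{(n)}) - G(u^{(n-1)})\|_{\ell^1 Y^\sigma} + \|\text{coefficient-difference term}\|_{\ell^1 Y^\sigma}\Bigr).
\]

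The first source is dispatched by the Lipschitz bound \eqref{G-lo-sigma} of Proposition~\ref{p:G}, which contributes $\lesssim T^\delta C(M)\|v^{(n-1)}\|_{\ell^1 X^\sigma}$ for $\sigma \leq s_0$. For the coefficient-difference term, we write $g(u^{(n)}) - g(u^{(n-1)}) = v^{(n-1)} h(u^{(n)}, u^{(n-1)})$ (and analogously for $b,\tilde{b}$), invoke the Moser Lipschitz estimate \eqref{moser+} to control it in $\ell^1 X^{\sigma'}$ by $C(M)\|v^{(n-1)}\|_{\ell^1 X^{\sigma'}}$, and pair it with the uniformly bounded factor $\partial^2 u^{(n)} \in \ell^1 X^{s_0-2}$ via the bilinear estimate \eqref{xxy1+} of Proposition~\ref{p-XXY+}. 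This produces a $T^\delta$ gain at the expense of a $\delta$-derivative loss on $v^{(n-1)}$. The strict inequality $\sigma < s_0 - 1$ in the statement is exactly what permits us to choose $\delta > 0$ small enough that $\sigma + \delta \leq s_0 - 1$, keeping \eqref{xxy1+} within its admissible range; the residual $\delta$-loss is then absorbed by interpolating $\|v^{(n-1)}\|_{\ell^1 X^{\sigma+\delta}}$ against the uniform bound $\|v^{(n-1)}\|_{\ell^1 X^{s_0}} \lesssim M$ supplied by \eqref{ub-hs0}.

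Combining the two source bounds and invoking the hypothesis $T \ll_{M_s} e^{-C(M)L}$ forces the product $e^{C(M)L} T^\delta$ strictly below $\tfrac12$, yielding $\|v^{(n)}\|_{\ell^1 X^\sigma} \leq \tfrac12 \|v^{(n-1)}\|_{\ell^1 X^\sigma}$. Summing the resulting geometric series shows that $(u^{(n)})$ is Cauchy in $\ell^1 X^\sigma$ and hence converges. The main obstacle is the delicate regularity accounting in the coefficient-difference estimate: to cash in the $T^\delta$ smallness needed to defeat the exponential $e^{C(M)L}$ from the linear bound, one must concede $\delta$ derivatives in \eqref{xxy1+}, and closing the contraction requires both the strict gap $s_0 - 1 - \sigma > 0$ and the uniform $\ell^1 X^{s_0}$ control on the iterates to reconcile this loss without losing the smallness.
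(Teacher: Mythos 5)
Your overall strategy matches the paper: form the difference equation for $v^{(n)} = u^{(n+1)} - u^{(n)}$, apply the linear estimate from Corollary~\ref{lincor1}, dispatch the $G$ difference via the Lipschitz bound \eqref{G-lo-sigma}, and control the coefficient-difference term with Moser estimates and the bilinear bound \eqref{xxy1+}. The conceptual framework is the right one.

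However, your interpolation step does not close the contraction. You arrive at a bound of the schematic form
\begin{equation*}
\| v^{(n)} \|_{\ell^1 X^\sigma} \lesssim T^\delta e^{C(M)L} \| v^{(n-1)} \|_{\ell^1 X^{\sigma+\delta}},
\end{equation*}
and then interpolate $\| v^{(n-1)} \|_{\ell^1 X^{\sigma+\delta}}$ between $\ell^1 X^\sigma$ and $\ell^1 X^{s_0}$, which produces
\begin{equation*}
\| v^{(n)} \|_{\ell^1 X^\sigma} \lesssim T^\delta e^{C(M)L} M^\theta \| v^{(n-1)} \|_{\ell^1 X^\sigma}^{1-\theta}, \qquad \theta = \frac{\delta}{s_0 - \sigma}.
\end{equation*}
Writing $a_n := \| v^{(n)} \|_{\ell^1 X^\sigma}$ and $K := T^\delta e^{C(M)L} M^\theta$, this is a sublinear recurrence $a_n \leq K\, a_{n-1}^{1-\theta}$ whose iterates converge to the fixed point $K^{1/\theta} > 0$, not to zero. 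Consequently $\sum_n a_n$ diverges, the series $\sum_n v^{(n)}$ is not shown to be summable, and the sequence $u^{(n)}$ is not shown to be Cauchy; no matter how small you take $T$ (hence $K$), you only shrink the limiting value $K^{1/\theta}$, which stays strictly positive.

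The correct resolution, and what the paper's proof does, is to avoid passing to a shifted exponent at all. Using the dyadic bounds \eqref{XXY-hl+}, \eqref{XXY-hh+} directly, the $\delta$-loss lands in the convergence of the frequency sums, not in a shift of the norm exponent of $v$: the paper's trilinear bounds \eqref{xxxy1} and \eqref{xxxy2} hold with $\|v\|_{\ell^1 X^\sigma}$ on the right provided $\delta < s_0 - 1 - \sigma$. The strict inequality $\sigma < s_0 - 1$ is therefore used to \emph{choose} $\delta > 0$ small, after which one gets the genuine linear estimate $\| H^{(n)} \|_{\ell^1 Y^\sigma} \lesssim_M T^\delta \| v^{(n-1)} \|_{\ell^1 X^\sigma}$ at the unshifted exponent, and combining with \eqref{G-lo-sigma} and Corollary~\ref{lincor1} gives the pure geometric contraction $\| v^{(n)} \|_{\ell^1 X^\sigma} \leq \tfrac12 \| v^{(n-1)} \|_{\ell^1 X^\sigma}$. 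You should also note a secondary imprecision: for the $b$ and $\tilde b$ coefficients, which depend on $\nabla u$, the difference involves $\nabla v^{(n-1)}$ terms as well as $v^{(n-1)}$ terms, and these carry different regularity weights; this is why the paper records the two separate trilinear bounds \eqref{xxxy1} and \eqref{xxxy2} with distinct regularity distributions, and why the frequency of the difference factor has to be split into low and high cases rather than treated uniformly as a low-frequency multiplier.
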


After these two propositions are proved, it follows that the sequence 
$u^{(n)}$ is uniformly bounded in $\ell^1 X^s$ and convergent in $\lX^{s-1}$. Then it is 
convergent in all intermediate topologies. This suffices in order to pass to the limit in the equation
and conclude that in the limit we obtain a solution $u \in \lX^s$ for the original equation:

\begin{prop}\label{local-exist}
Let $u_0 \in \lH^s$ be a nontrapping  initial datum with parameters $R,M,L,M_s$. 
Assume that the time $T$ is small enough,
\begin{equation}\label{ub-T}
T \ll_{M_s} e^{-C(M) L} .
\end{equation}
Then there exists a solution $u \in \lX^s$ with the following properties:
\begin{enumerate}
\item $H^s$ bound: 
\begin{equation}\label{ub-hs+}
\| u \|_{\ell^1 X^{s} [0,T] } \leq e^{C(M) L} M_s.
\end{equation}
\item $H^{s_0}$ bounds:
\begin{equation}\label{ub-hs0+}
\| u \|_{\ell^1 X^{s_0}[0,T] } \leq  2M.
\end{equation}

\item Small exterior size:
\begin{equation}\label{ub-ext+}
\| u \|_{\ell^1 X^{s} ([0,T] \times B_R^c) } \leq  2\epsilon.
\end{equation}

\item  Nontrapping: 
\begin{equation}\label{ub-L1}
L(u) \leq 2L.
\end{equation}
\end{enumerate}
\end{prop}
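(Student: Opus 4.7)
The proof consists of assembling \propref{p:it-uniform} and \propref{p:it-converge} and passing to the limit in the iteration scheme \eqref{eqn:quasi-para3}. First, having fixed $T$ as in \eqref{ub-T}, \propref{p:it-uniform} provides a uniform bound $\|u^{(n)}\|_{\ell^1 X^s[0,T]} \lesssim e^{C(M)L} M_s$, while \propref{p:it-converge} shows that $\{u^{(n)}\}$ is Cauchy in $\ell^1 X^\sigma$ for every $0 \le \sigma < s_0 - 1$. A standard dyadic interpolation inequality of the form $\|v\|_{\ell^1 X^{\sigma'}}\lesssim \|v\|_{\ell^1 X^\sigma}^\theta \|v\|_{\ell^1 X^s}^{1-\theta}$ with $\sigma' = \theta\sigma+(1-\theta)s$, which follows directly from \eqref{l1Xdef} and Cauchy-Schwarz on the Littlewood-Paley pieces, then upgrades this to Cauchy convergence in $\ell^1 X^\sigma$ for every $0 \le \sigma < s$; weak-$*$ compactness provides a limit $u \in \ell^1 X^s$.

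Next I verify that $u$ satisfies the four conclusions. The bounds \eqref{ub-hs+}--\eqref{ub-ext+} transfer from the corresponding uniform bounds on $u^{(n)}$ by lower semicontinuity of the $\ell^1 X^s$ and $\ell^1 X^{s_0}$ norms under the convergence established above. For the nontrapping conclusion \eqref{ub-L1} I invoke \propref{p:nontrap2}: since $u^{(n)} \to u$ in $\ell^1 X^{s_0}$, for $n$ sufficiently large we have $\|u - u^{(n)}\|_{\ell^1 X^{s_0}} \le e^{-C_0(M)L}$, which is precisely the perturbation hypothesis of that proposition, so the bound $L(u^{(n)}) \le 2L$ transfers to $L(u) \le 2L$ up to absorption of universal constants.

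Finally I pass to the limit in the iteration equation \eqref{eqn:quasi-para3}. The source term converges $G(u^{(n)}) \to G(u)$ in $\ell^1 Y^{s_0}$ by the Lipschitz bound \eqref{G-lo-sigma} of \propref{p:G}. The coefficients $g(u^{(n)}), b^j(u^{(n)}), \tilde b^j(u^{(n)})$ converge to $g(u), b^j(u), \tilde b^j(u)$ in $\ell^1 X^{s_0}$ by the Moser difference estimate \eqref{moser+}; combined with the paraproduct bounds of \propref{p:XX+} and the strong convergence of $u^{(n+1)} \to u$, each paradifferential term in \eqref{eqn:quasi-para3} converges in a sense sufficient to identify the limit as a solution of \eqref{eqn:quasi-para2}, and hence of \eqref{eqn:quasiquad1-re1}. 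The plan is almost entirely bookkeeping, since the delicate analytic work has already been done in Propositions \ref{p:it-uniform} and \ref{p:it-converge}; the main point requiring care is the deployment of \propref{p:nontrap2} for propagation of the nontrapping parameter, which is compatible with convergence in $\ell^1 X^{s_0}$ only because the threshold $e^{-C_0(M)L}$ is a fixed (if very small) positive number that is achieved automatically in the tail of the sequence.
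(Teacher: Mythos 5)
Your proposal is correct and follows essentially the same route as the paper, which condenses the proof of this proposition into the single preceding remark: combine Propositions~\ref{p:it-uniform} and~\ref{p:it-converge}, interpolate between the uniform $\ell^1 X^s$ bound and the Cauchy property in $\ell^1 X^{\sigma}$ ($\sigma < s_0-1$) to get convergence in all intermediate spaces, then pass to the limit in the iteration. Your fleshing out of the bookkeeping (lower semicontinuity of norms for conclusions~(1)--(3), Proposition~\ref{p:nontrap2} applied to the tail of the sequence for conclusion~(4), the Lipschitz bound for $G$ and the Moser estimate to pass to the limit in the coefficients) is exactly what the paper intends; the only cosmetic slip is calling the dyadic interpolation inequality ``Cauchy--Schwarz'' when for general $\theta$ it is H\"older.
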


The aim of the next two subsections is to prove the first two propositions.

\subsection{The iteration scheme: uniform bounds}

Our aim here is to prove Proposition~\ref{p:it-uniform}. We use  induction on $n$.

\emph{Proof of \eqref{ub-hs}.}  Here we use the bounds for the paradifferential equation in Corollary~\ref{lincor1} 
together with the bounds for $G$ in Proposition~\ref{p:G}
to get
\[
\begin{split}
\| u^{(n+1)}\|_{\ell^1 X^{s}[0,T] } \lesssim & \ e^{C(M) L} ( \| u_0\|_{\ell^1 H^{s}} + \| G(u^{(n)}) \|_{l^1Y^s})
\\ 
\lesssim & \ e^{C(M) L}  ( M_s  + T^\delta C(M) \|u^{(n)} \|_{ \ell^1 X^{s}[0,T] })  .
\end{split}
\]
Hence for small enough $T$ as in \eqref{ub-T} the bound \eqref{ub-hs} follows.

\bigskip

\emph{Proof of \eqref{ub-hs0}.} 
A direct time integration in our iteration yields
\[
\|u^{(n+1)}\|_{\lX^0} \lesssim \sqrt{T} e^{C(M)L}M_s .
\]
Interpolating with the $l^1 X^s$ bound above yields
\begin{equation}\label{u-s0}
\| u^{(n+1)} \|_{\ell^1 X^{s_0}} \lesssim  T^{\delta/2} e^{C(M)L} M_s,
\end{equation}
which suffices for $T$ as in \eqref{ub-T}.

\bigskip

\emph{Proof of \eqref{ub-ext}.} 
This follows directly from the  
bound \eqref{u-s0} in view of the choice of $R$ in \eqref{choose-R}.

\bigskip

\emph{Proof of \eqref{ub-L}.}  This is a consequence of Proposition~\ref{p:nontrap2} due to  
\eqref{u-s0}.

\subsection{The iteration scheme: weak convergence}

Here we prove Proposition~\ref{p:it-converge}, which asserts
 that our iteration scheme converges in the weaker $l^1 X^\sigma$ topology. We recall the range for $\sigma$,
 namely
\begin{equation}\label{sigma-range}
0 \leq \sigma < s_0-1.
\end{equation}
For this we write an equation for the difference
 $v^{(n)} = u^{(n+1)} - u^{(n)}$:
\begin{equation}
\label{iterate-diff}
\left\{ \begin{array}{l}
(i \partial_t  +  \p_j T^w_{g^{jk, (n)}}    \p_k  +  T^w_{b^{(n)}} \nabla) v^{(n)}
+  T^w_{\tilde b^{(n)}} \nabla \bar v^{(n)} = G(u^{(n)}) - G(u^{(n-1)}) + H^{(n)}
\\ \\
v^{(n)}(0,x) = 0,
\end{array} \right. 
\end{equation}
where 
\[
H^{(n)} =   \p_j T^w_{g^{jk, (n)}-g^{jk,(n-1)}}    \p_k u^{(n)} +  T^w_{b^{(n)}-b^{(n-1)}} \nabla u^{(n)}
+  T^w_{\tilde b^{(n)}-\tilde b^{(n-1)}} \nabla \bar u^{(n)}.
\]
For the $G$ difference we apply Proposition~\ref{p:G},
which yields
\begin{equation}
\|   G(u^{(n)}) - G(u^{(n-1)}) \|_{l^1 Y^\sigma} \lesssim_M
T^\delta \| v^{(n-1)} \|_{l^1X^\sigma}.
\end{equation}
Similarly, for $H^{(n)}$ we claim the bound
\begin{equation}\label{Hnbd}
\|  H^{(n)} \|_{l^1 Y^\sigma} \lesssim_M
T^\delta \| v^{(n-1)} \|_{l^1 X^\sigma}.
\end{equation}
Assume this holds, then for the
paradifferential equation we use Corollary~\ref{lincor1}. We obtain
\[
\| v^{(n+1)}\|_{\lX^{\sigma}} \lesssim T^\delta e^{C(M)L} \| v^{(n)}\|_{\lX^{\sigma}}, 
\]
which for $T$ as in \eqref{ub-T} yields
\[
\| v^{(n+1)}\|_{\lX^{\sigma}} \lesssim \frac12 \| v^{(n)}\|_{\lX^{{\sigma}}} .
\]
The desired convergence follows. Thus we have established the existence part of our main theorem.

It remains to prove the bound \eqref{Hnbd}.
We write the coefficients above in the form
\[
g^{jk, (n)}-g^{jk,(n-1)} = v^{(n-1)} h_0(u)
\]
respectively 
\[
b^{(n)}-b^{(n-1)} = h_1(u,\nabla u) \nabla v^{(n-1)}
+ h_2(u,\nabla u) v^{(n-1)}
\]
where $u$ stands for $(u^{(n)}, u^{(n-1)})$.
For the functions $h_j$ we can apply the Moser estimates
in \eqref{moser}.  
Then we are left with proving 
the following two trilinear bounds,
\begin{equation}\label{xxxy1}
\| S_{<k-4} (wv) u_k\|_{\lY^\sigma} \lesssim T^\delta  
\| w\|_{\lX^{s_0-1}} \| v\|_{\lX^\sigma} \| u\|_{\lX^{s_0-2}},
\end{equation}
respectively 
\begin{equation}\label{xxxy2}
\| S_{<k-4} (wv) u_k\|_{\lY^\sigma} \lesssim T^\delta  
\| w\|_{\lX^{s_0-1}} \| v\|_{\lX^{\sigma-1}} \| u\|_{\lX^{s_0-1}}.
\end{equation}
We consider two cases:

a) The $v$ frequency is $\leq 2^{k-2}$. Then 
the frequency of $w$ is similar, and the first bound 
\eqref{xxxy1} is worse.  We harmlessly drop the multiplier, use \eqref{xxy1+} for the $vu$ product 
and bound $w$ in $L^\infty$.

b) The $v$ frequency is $\geq 2^{k-2}$. Then 
the frequency of $w$ is also similar, and 
the worst case is \eqref{xxxy2} with  $\sigma = 0$.
But then this is exactly the bound \eqref{tri}, proved earlier.

\subsection{ Uniqueness via weak Lipschitz dependence}

Our aim here is to prove the following estimate for the difference of two solutions:

\begin{prop}
Let  $u^{(1)}_0 \in \ell^1 H^s$ be a nontrapping initial datum with parameters $M,R,L$ and $M(s)$.
Let $u^{(2)}_0\in \ell^1 H^s$ be another initial datum with comparable size,
\[
\| u^{(2)}_0\|_{\ell^1 H^s} \lesssim M_s
\]
and close to $u^{(1)}_0 $ in a weak topology,
\[
\|u^{(1)}_0- u^{(2)}_0\|_{\ell^1 L^2} \ll_{M_s} e^{-C(M)L}.
\]
 Then the following hold:

a) $u^{(2)}_0$ has comparable parameters, and the same associated ball $B$. 

b) The associated solutions exist on the time interval $[0,T]$ with $T$ as in \eqref{ub-T}.

c) The following difference estimate holds in $[0,T]$
for $\sigma$ as in \eqref{sigma-range}:
\begin{equation}\label{weak-lip}
\|  u^{(1)} - u^{(2)}\|_{\lX^{\sigma}} \lesssim e^{C(M) L} \|  u^{(1)}(0) - u^{(2)}(0)\|_{l^1 H^\sigma}.
\end{equation}
\end{prop}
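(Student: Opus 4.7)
The plan is to derive the three claims in order, with (a) and (b) being quick consequences of the stability and existence results already proved, and (c) following the same template used for the iteration convergence in Proposition~\ref{p:it-converge}.

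For part (a), I would start from $\|u^{(2)}_0\|_{\ell^1 H^s} \lesssim M_s$ and the smallness assumption $\|u^{(1)}_0 - u^{(2)}_0\|_{\ell^1 L^2} \ll_{M_s} e^{-C(M)L}$, and interpolate at level $s_0 \in (d/2+2, s)$ to obtain
\[
\|u^{(1)}_0 - u^{(2)}_0\|_{\ell^1 H^{s_0}} \lesssim_{M_s} \|u^{(1)}_0 - u^{(2)}_0\|_{\ell^1 L^2}^{1-s_0/s} \ll e^{-\tilde C(M) L},
\]
provided $C(M)$ in the hypothesis is chosen large enough relative to $\tilde C(M)$. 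Proposition~\ref{p:nontrap2} then shows that $g(u^{(2)}_0)$ is nontrapping with comparable parameters $M, R, L$, which gives (a). Part (b) is then immediate by applying Proposition~\ref{local-exist} separately to each datum with the comparable parameters.

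For part (c), I would set $v = u^{(1)} - u^{(2)}$ and subtract the two versions of \eqref{eqn:quasi-para2} to derive the paradifferential difference equation
\[
(i\p_t + \p_j T^w_{g^{jk,(1)}} \p_k + T^w_{b^{(1)}}\cdot\nabla) v + T^w_{\tilde b^{(1)}}\cdot\nabla \bar v = G(u^{(1)}) - G(u^{(2)}) + H,
\]
where $H$ collects the usual three terms coming from freezing the coefficients at $u^{(1)}$,
\[
H = \p_j T^w_{g^{jk,(1)} - g^{jk,(2)}} \p_k u^{(2)} + T^w_{b^{(1)} - b^{(2)}}\cdot\nabla u^{(2)} + T^w_{\tilde b^{(1)} - \tilde b^{(2)}}\cdot\nabla \bar u^{(2)}.
\]
By part (a) and the uniform bounds \eqref{ub-hs0+}, \eqref{ub-ext+}, \eqref{ub-L1} in Proposition~\ref{local-exist} applied to $u^{(1)}$, the reference coefficients $g^{(1)}, b^{(1)}, \tilde b^{(1)}$ satisfy the hypotheses \eqref{gbassump}, \eqref{smallassump}, \eqref{Lassump} with parameters $M, R, L$, so Corollary~\ref{lincor1} yields
\[
\|v\|_{\ell^1 X^\sigma} \lesssim e^{C(M)L}\bigl(\|v(0)\|_{\ell^1 H^\sigma} + \|G(u^{(1)}) - G(u^{(2)})\|_{\ell^1 Y^\sigma} + \|H\|_{\ell^1 Y^\sigma}\bigr).
\]

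To close the estimate I would apply the Lipschitz bound \eqref{G-lo-sigma} in Proposition~\ref{p:G} to get $\|G(u^{(1)}) - G(u^{(2)})\|_{\ell^1 Y^\sigma} \lesssim_M T^\delta \|v\|_{\ell^1 X^\sigma}$, and reuse the trilinear bound \eqref{Hnbd} already proved via \eqref{xxxy1}--\eqref{xxxy2} to obtain $\|H\|_{\ell^1 Y^\sigma} \lesssim_M T^\delta \|v\|_{\ell^1 X^\sigma}$; the Moser step to reduce the coefficient differences $g^{(1)}-g^{(2)}$, etc., to products of $v$ with smooth expressions in $(u^{(1)}, u^{(2)})$ uses Proposition~\ref{p:XX+}. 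Combining and absorbing with the smallness $T^\delta e^{C(M)L} \ll 1$ from \eqref{ub-T} yields
\[
\|v\|_{\ell^1 X^\sigma} \leq e^{C(M)L}\|v(0)\|_{\ell^1 H^\sigma} + \tfrac12 \|v\|_{\ell^1 X^\sigma},
\]
which is \eqref{weak-lip}. The main obstacle is a bookkeeping one: one must verify that the reference coefficients (frozen at $u^{(1)}$) satisfy the linear nontrapping hypotheses \emph{independently of $v$}, which is exactly why we freeze at $u^{(1)}$ rather than working with a continuous family; and one must respect the range $0 \leq \sigma < s_0-1$ in \eqref{sigma-range}, since the $\p_j \p_k u^{(2)}$ factor in $H$ costs two derivatives while the trilinear bounds \eqref{xxy2+}--\eqref{xxy1+} only gain back $s-2$ with a $T^\delta$ factor at this regularity.
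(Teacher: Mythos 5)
Your proposal is correct and follows essentially the same route as the paper's own (very terse) proof: interpolation to get $\ell^1 H^{s_0}$ closeness followed by Proposition~\ref{p:nontrap2} for (a), Proposition~\ref{local-exist} for (b), and a repeat of the Proposition~\ref{p:it-converge} argument for (c). You have simply unpacked, correctly, the details that the paper delegates to "this repeats the arguments in the proof of Proposition~\ref{p:it-converge}," including the key points that the linear coefficients should be frozen at $u^{(1)}$ so Corollary~\ref{lincor1} applies with parameters independent of $v$, and that the range restriction $\sigma<s_0-1$ in \eqref{sigma-range} is forced by the two-derivative loss in the $\p_j T^w_{g^{(1)}-g^{(2)}}\p_k u^{(2)}$ term.
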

Uniqueness follows as a corollary of this result.
\begin{proof}
a) Interpolating between the $\ell^1 L^2$ and $\ell^1 H^s$ bounds we get
\[
 \|u^{(1)}_0- u^{(2)}_0\|_{\ell^1 H^{s_0}} \ll  e^{-C(M)L}.
\]
Thus $M$ and $B_R$ remain the same. Finally, $L$ remains the same by Proposition~\ref{p:nontrap2}.

b) This is a consequence of Proposition~\ref{local-exist}.

c) This repeats the arguments in the proof of Proposition~\ref{p:it-converge}.
\end{proof}

\subsection{ Frequency envelopes and higher regularity}

Here we consider solutions with initial data $u_0$ as in
Proposition~\ref{local-exist} and provide bounds for the frequency
envelope of the solutions in terms of the frequency envelope of the
initial data. As a corollary, we show that higher regularity for the
data implies higher regularity for the solution.

Our starting point is the initial data $u_0\in \ell^1 H^s$, for which we consider 
an admissible frequency envelope $c_k$. Then our frequency envelope bound for the solutions is 

\begin{prop}\label{p:fe}
Let $u$ be a solution to \eqref{eqn:quasiquad1-re} as in Proposition~\ref{local-exist}, with initial data 
$u_0 \in \ell^1 H^s$. Let $c_k$ be an admissible frequency envelope for $u_0$ in $ \ell^1 H^s$.
Then  the solution $u$ satisfies the dyadic bounds
\begin{equation}
\| S_k u\|_{\ell^1 X^s} \lesssim e^{C(M) L} c_k .
\end{equation}
\end{prop}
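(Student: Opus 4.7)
The plan is to propagate frequency envelopes through the same argument that produced the uniform $\ell^1 X^s$ bound in Proposition~\ref{p:it-uniform}, by combining a frequency envelope version of Corollary~\ref{lincor1} at regularity $s$ with a frequency envelope version of the source estimate from Proposition~\ref{p:G}, then absorbing the nonlinear self-term using the smallness of $T$.

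First, I would apply $S_k$ to the paradifferential equation \eqref{eqn:quasi-para2} satisfied by $u$ and invoke the frequency envelope refinement of Corollary~\ref{lincor1}, obtained exactly as Corollary~\ref{fe-est} but at regularity $s$ rather than $s_0$. Since the coefficients $g(u),b(u),\tilde b(u)$ satisfy \eqref{gbassump}--\eqref{Lassump} with parameters $M,R,L$ from Proposition~\ref{local-exist}, this gives
\[
\|S_k u\|_{\ell^1 X^s} \lesssim e^{C(M)L}\bigl(c_k + d_k\bigr),
\]
where $d_k$ is any admissible frequency envelope for $G(u)$ in $\ell^1 Y^s$.

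Second, I would establish the envelope source bound
\[
\|S_k G(u)\|_{\ell^1 Y^s} \lesssim T^\delta C(M)\, e_k,
\]
where $e_k$ is an admissible envelope for $u$ in $\ell^1 X^s$. This extends Corollary~\ref{c:G} from level $s_0$ up to level $s$, and follows the Littlewood--Paley decomposition of $G_1$ and $G_2$ from the proof of Proposition~\ref{p:G}: balanced and low-high interactions are handled by the dyadic bilinear bounds \eqref{XXY-hh+} and \eqref{XXY-hl+} together with the high-regularity Moser estimate \eqref{moser}, while high-low interactions exploit the slowly varying property of the envelope to absorb the off-diagonal loss. The $T^\delta$ gain from Proposition~\ref{p-XXY+} is preserved throughout.

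Finally, choosing $e_k$ to be the natural envelope \eqref{freqEnv} for $u$ in $\ell^1 X^s$ and combining the two bounds yields
\[
e_k \lesssim e^{C(M)L}\bigl(c_k + T^\delta C(M)\, e_k\bigr),
\]
and for $T$ as in \eqref{ub-T} the self-term is absorbed to give $e_k \lesssim e^{C(M)L} c_k$, whence the claim. The main obstacle is the envelope source bound at regularity $s$: Corollary~\ref{c:G} only treats the case $\sigma = s_0$, so one must promote the argument of Case~1 in the proof of Proposition~\ref{p:G} (which handles $\sigma \geq s_0$) to track envelopes, verifying that the trilinear and quadrilinear paradifferential errors in \eqref{G2}--\eqref{sec-int} decompose into pieces controlled by $e_k$ with sufficient off-diagonal decay to sum. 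Alternatively, one could run the entire iteration of Proposition~\ref{p:it-uniform} at the envelope level, propagating the envelope bound from $u^{(n)}$ to $u^{(n+1)}$ and then passing to the limit using Proposition~\ref{p:it-converge}.
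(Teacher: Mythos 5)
Your proposal follows essentially the same route as the paper's proof: apply the frequency envelope version of the linear paradifferential bound (Corollary~\ref{fe-est}, extended to regularity $s$) together with the source envelope bound (Corollary~\ref{c:G}, extended to regularity $s$), and close by choosing $T$ small enough as in \eqref{ub-T} to absorb the self-term. The only cosmetic difference is that the paper works with a \emph{minimal} admissible envelope $d_k$ for $u$ in $\ell^1 X^s$ rather than the explicit envelope \eqref{freqEnv}, which makes the absorption step slightly cleaner; your concern about promoting Corollary~\ref{c:G} from level $s_0$ to level $s$ is legitimate (the paper glosses over this) but it is handled by the fact that the uniform bound \eqref{G-hi-sigma} in Proposition~\ref{p:G} already holds for all $\sigma\ge 0$, so the argument in the proof of Corollary~\ref{c:G} carries over directly.
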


As a consequence, we obtain the following higher regularity statement:

\begin{cor}\label{cor:higher}
Let $u$ be a solution to \eqref{eqn:quasiquad1-re} as in Proposition~\ref{local-exist}, with initial data $u_0$.
Assume in addition that $u_0 \in \ell^1 H^\sigma$ for some $\sigma \geq s$. Then 
the solution $u$ satisfies the bound
\begin{equation}
\| u\|_{\lX^\sigma} \lesssim e^{C(M) L} \|u_0\|_{l^1 H^\sigma}, \qquad \sigma \geq s.
\end{equation}
\end{cor}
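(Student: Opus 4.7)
The plan is to deduce the higher regularity bound directly from the dyadic frequency envelope estimate in Proposition~\ref{p:fe}, by applying it with a carefully chosen envelope adapted to the higher regularity of $u_0$. Since $u_0 \in \lH^\sigma \subset \lH^s$, the hypotheses of Proposition~\ref{local-exist}, and therefore of Proposition~\ref{p:fe}, are in force; the parameters $M$, $R$, $L$ are the ones already associated to $u_0$ through its $\lH^{s_0}$ size and nontrapping profile, and do not change upon upgrading the regularity.

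Concretely, I first pick an admissible frequency envelope $\tilde c_k$ for $u_0$ in $\lH^\sigma$, for instance the canonical one given by \eqref{freqEnv}. I then set
\[
c_k := 2^{(s-\sigma)k}\tilde c_k.
\]
Since $s \leq \sigma$, a direct check shows that $c_k$ is an admissible envelope for $u_0$ in the weaker space $\lH^s$: the dyadic control $\|S_k u_0\|_{\lH^s} \lesssim c_k$ follows from the frequency-localized identity $\|S_k u_0\|_{\lH^s} \approx 2^{(s-\sigma)k}\|S_k u_0\|_{\lH^\sigma} \lesssim 2^{(s-\sigma)k}\tilde c_k$, while the slow and uniform varying conditions transfer from $\tilde c_k$ to $c_k$ after at most a harmless adjustment of the implicit constants in the definition of a frequency envelope (using $s - \sigma \leq 0$). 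With this choice in hand, Proposition~\ref{p:fe} yields
\[
\|S_k u\|_{\lX^s} \lesssim e^{C(M)L}\, c_k = e^{C(M)L}\, 2^{(s-\sigma)k}\,\tilde c_k.
\]
Rescaling via $\|S_k u\|_{\lX^\sigma} \approx 2^{(\sigma-s)k}\|S_k u\|_{\lX^s}$ (which again is just frequency localization) and summing in $k$ gives
\[
\|u\|_{\lX^\sigma}^2 \,\approx\, \sum_k \|S_k u\|_{\lX^\sigma}^2 \,\lesssim\, e^{2C(M)L}\sum_k \tilde c_k^2 \,\lesssim\, e^{2C(M)L}\|u_0\|_{\lH^\sigma}^2,
\]
which is the desired estimate.

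The main obstacle here is essentially nil once Proposition~\ref{p:fe} is established: the argument is purely bookkeeping at the level of frequency envelopes. The only mildly delicate point is justifying that the solution $u$ produced by Proposition~\ref{local-exist} actually lies in $\lX^\sigma$ to begin with, rather than merely satisfying the bound a posteriori. This is handled by approximating $u_0$ by smoother data $u_0^{(n)} \in \lH^{\sigma'}$ for some $\sigma' \gg \sigma$, running the iteration scheme of Proposition~\ref{p:it-uniform} at the higher regularity to produce smooth solutions $u^{(n)}$ on a common time interval $[0,T]$ (the lifespan bound depending only on the parameters $M,L,M_s$, which are stable under such approximations), applying the envelope bound above uniformly in $n$, and passing to the limit; the weak Lipschitz dependence established in the previous subsection identifies the limit with $u$.
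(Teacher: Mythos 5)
Your argument follows the same strategy as the paper: apply the dyadic frequency envelope bound of Proposition~\ref{p:fe} and then perform a frequency-weighted summation to upgrade the $\lX^s$ dyadic estimates to an $\lX^\sigma$ bound. The only real difference is the \emph{direction} of the rescaling. The paper takes a minimal admissible envelope $c_k$ for $u_0$ in $\lH^s$, applies Proposition~\ref{p:fe}, and then verifies $\sum_k 2^{2(\sigma-s)k}c_k^2 \approx \|u_0\|_{\lH^\sigma}^2$; you instead take an envelope $\tilde c_k$ in $\lH^\sigma$ and rescale \emph{downward} via $c_k = 2^{(s-\sigma)k}\tilde c_k$, apply the proposition, and rescale back. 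This is the same calculation reorganized.

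One caution, though, on the phrase ``a direct check shows that $c_k$ is an admissible envelope for $u_0$ in $\lH^s$.'' Property (2) of admissibility, $\sum_k c_k^2 \lesssim \|u_0\|_{\lH^s}^2$, does \emph{not} transfer: taking $u_0$ concentrated at frequency $2^{k_0}$ and $\tilde c_k$ the minimal envelope from \eqref{freqEnv}, the left-slow-varying tail contributes $\sum_k c_k^2 \gtrsim 2^{-2\delta k_0}\|u_0\|_{\lH^\sigma}^2$, whereas $\|u_0\|_{\lH^s}^2 = 2^{2(s-\sigma)k_0}\|u_0\|_{\lH^\sigma}^2$ is smaller by a factor $2^{2(\sigma-s-\delta)k_0}$ when $\sigma-s>\delta$. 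Likewise the uniform-varying condition (4) only transfers with the worse exponent $\sigma'+(\sigma-s)$. Neither issue is fatal — Proposition~\ref{p:fe} and its proof exploit only properties (1), (3), (4) and only need the envelope parameter $\sigma'$ to be sufficiently large — but the claim glosses over the fact that $c_k$ is not literally admissible in the sense of the definition, and that Proposition~\ref{p:fe} must therefore be invoked in a slightly relaxed form. The paper's ordering avoids this entirely: starting from the minimal $\lH^s$ envelope, admissibility is automatic, and the $\lH^\sigma$ information is carried by the weighted $\ell^2$ comparison rather than by an a~priori rescaled envelope.

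The approximation argument in your final paragraph is correct but unnecessary: Proposition~\ref{p:fe} gives the dyadic bound $\|S_k u\|_{\lX^s} \lesssim e^{C(M)L}c_k$ for each $k$, and $\|u\|_{\lX^\sigma}^2$ is precisely the weighted $\ell^2$ sum $\sum_k 2^{2(\sigma-s)k}\|S_k u\|_{\lX^s}^2$; finiteness of that sum \emph{is} the membership $u\in\lX^\sigma$, so no approximation by smoother data is required to establish it.
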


\begin{proof}[Proof of Proposition~\ref{p:fe}]
We denote by $d_k$ a \emph{minimal} admissible frequency envelope for $u$ in $l^1X^s$.
By Corollary~\ref{c:G} 
we have a corresponding bound for $G(u)$, namely
\[
\| S_k G(u)\|_{\lX^s} \lesssim_M T^\delta d_k.
\]
Now we apply to $u$ the bound for the linear paradifferential equation in Corollary~\ref{fe-est}.
This gives
\[
\| S_k u \|_{\lX^s} \lesssim e^{C(M) L} ( c_k + T^\delta d_k).
\]
The envelope on the right is admissible, so by the minimality of $d_k$ we obtain
\[
d_k \lesssim e^{C(M) L} ( c_k + T^\delta d_k).
\]
Now the choice of $T$ as in \eqref{ub-T} guarantees that
\[
d_k \lesssim e^{C(M) L}  c_k 
\]
as needed.
\end{proof}

\begin{proof}[Proof of Corollary~\ref{cor:higher}]
Let $c_k$ be a minimal admissible frequency envelope for the initial data $u_0$ in $\ell^1 H^s$.
Then
\[
\| u_0\|_{\lH^\sigma}^2 \approx \sum 2^{2(\sigma - s)k} c_k^2. 
\]
On the other hand, the previous proposition implies that
\[
\| u\|_{\lX^\sigma}^2 \lesssim e^{C(M) L} \sum 2^{2(\sigma - s)k} c_k^2 \lesssim   e^{C(M) L}\| u_0\|_{\lH^\sigma}^2.
\]
\end{proof}

\subsection{ Continuous dependence on the initial data}
\label{sec:contdep}
Here we want to show that $u_0 \to u$ is continuous from $l^1 H^s$
into $\lX^s$.  The argument is exactly as in \cite{MMT3}.

Let $a^{(n)}_j$ and $a_j$ be minimal frequency envelopes given by \eqref{freqEnv} for $u_0^{(n)}$ and $u_0$ in $l^1H^s$.  If $u_0^{(n)} \to u_0$ in $l^1 H^s$, then $(a_j^{(n)})\to (a_j)$ in $l^2$.  So for any $\epsilon>0$, there is $N_\epsilon$ so
\[
\| a^{(n)}_{> N_\epsilon} \|_{l^2} \leq \epsilon \qquad \text{for all $n$}.
\]

We remark that our initial data convergence in $l^1 H^s$ guarantees that, for 
large enough $n$, the control parameters $R,L$ can be uniformly chosen independently of $n$.  Then the associated solutions $u^{(n)}$ exist on a 
uniform time interval $[0,T]$. By Proposition~\ref{p:fe}, the envelopes $a_j^{(n)}$ carry over to the solutions $u^{(n)}$ measured in $\lX^s[0,T]$.
In particular,  we conclude that
\begin{equation}\label{hf-diff}
\| u^{(n)}_{> N_\epsilon} \|_{\lX^s} \leq \epsilon \qquad \text{for all $n$}.
\end{equation}

Using \eqref{weak-lip} for low frequencies 
and \eqref{hf-diff} for the high frequencies, we obtain
\[
\begin{split}
\| u^{(n)} - u\|_{\lX^s} \lesssim & \| S_{< N_\epsilon} (u^{(n)} - u)\|_{\lX^s} 
+  \| S_{> N_\epsilon} u^{(n)} \|_{\lX^s} +  \| S_{> N_\epsilon} u \|_{\lX^s} 
\\
\lesssim &  2^{N_\epsilon} \| S_{< N_\epsilon} (u^{(n)} - u)\|_{\lX^{s-1}} 
+  2 \epsilon
 \\
\lesssim &  2^{N_\epsilon} \| S_{< N_\epsilon} (u^{(n)}_0 - u_0)\|_{l^1 H^{s-1}} 
+  2 \epsilon.
\end{split}
\]
As $n \to \infty$ we see that
\[
\lim \sup_{n \to \infty} \| u^{(n)} - u\|_{\lX^s} \lesssim \epsilon.
\]
So upon letting $\epsilon \to 0$,
\[
\lim_{n \to \infty} \| u^{(n)} - u\|_{\lX^s} = 0,
\]
which gives the desired result.

\end{document}